\lstdefinestyle{cppstyle}{
  language=C++,
  backgroundcolor=\color{gray!10},
  basicstyle=\ttfamily\footnotesize,
  keywordstyle=\color{blue!90}\bfseries,
  commentstyle=\color{gray!70}\itshape,
  stringstyle=\color{orange!90!black},
  numbers=left,
  numberstyle=\tiny\color{gray},
  stepnumber=1,
  numbersep=6pt,
  frame=single,
  rulecolor=\color{gray!30},
  breaklines=true,
  tabsize=2,
  showstringspaces=false,
  captionpos=b
}
\newcommand{\bx}{\boldsymbol{x}}
\newcommand{\barg}{\bar{g}}
\theoremstyle{plain}
\newtheorem{theorem}{Theorem}[section]
\newtheorem{lemma}[theorem]{Lemma}
\newtheorem{corollary}[theorem]{Corollary}
\theoremstyle{definition}
\newtheorem{definition}[theorem]{Definition}
\newtheorem{assumption}[theorem]{Assumption}
\theoremstyle{remark}
\newtheorem{remark}[theorem]{Remark}
\icmltitlerunning{TRSVR: An Adaptive Stochastic Trust-Region Method with Variance Reduction}
\begin{document}

\twocolumn[
  \icmltitle{TRSVR: An Adaptive Stochastic Trust-Region Method with Variance Reduction}

  % It is OKAY to include author information, even for blind submissions: the
  % style file will automatically remove it for you unless you've provided
  % the [accepted] option to the icml2026 package.

  % List of affiliations: The first argument should be a (short) identifier you
  % will use later to specify author affiliations Academic affiliations
  % should list Department, University, City, Region, Country Industry
  % affiliations should list Company, City, Region, Country

  % You can specify symbols, otherwise they are numbered in order. Ideally, you
  % should not use this facility. Affiliations will be numbered in order of
  % appearance and this is the preferred way.
  \icmlsetsymbol{equal}{*}

  \begin{icmlauthorlist}
    \icmlauthor{Yuchen Fang}{equal,UCB_Math}
    \icmlauthor{Xinshou Zheng}{equal,BU}
    \icmlauthor{Javad Lavaei}{UCB_IEOR}
    %\icmlauthor{}{sch}
    %\icmlauthor{}{sch}
  \end{icmlauthorlist}

  \icmlaffiliation{UCB_Math}{Department of Mathematics, University of California, Berkeley, CA, USA}
  \icmlaffiliation{BU}{Boston University, MA, USA}
  \icmlaffiliation{UCB_IEOR}{Department of IEOR, University of California, Berkeley, CA, USA}

  \icmlcorrespondingauthor{Yuchen Fang}{yc\_fang@berkeley.edu}
  %\icmlcorrespondingauthor{Firstname2 Lastname2}{first2.last2@www.uk}

  % You may provide any keywords that you find helpful for describing your
  % paper; these are used to populate the "keywords" metadata in the PDF but
  % will not be shown in the document
  \icmlkeywords{Machine Learning, ICML}

  \vskip 0.3in
]

% this must go after the closing bracket ] following \twocolumn[ ...

% This command actually creates the footnote in the first column listing the
% affiliations and the copyright notice. The command takes one argument, which
% is text to display at the start of the footnote. The \icmlEqualContribution
% command is standard text for equal contribution. Remove it (just {}) if you
% do not need this facility.

% Use ONE of the following lines. DO NOT remove the command.
% If you have no special notice, KEEP empty braces:
\printAffiliationsAndNotice{}  % no special notice (required even if empty)
% Or, if applicable, use the standard equal contribution text:
% \printAffiliationsAndNotice{\icmlEqualContribution}

\begin{abstract}
We propose a stochastic trust-region method for unconstrained nonconvex optimization that incorporates stochastic variance-reduced gradients (SVRG) to accelerate convergence. Unlike classical trust-region methods, the proposed algorithm relies solely on stochastic gradient information and does not require function value evaluations. The trust-region radius is adaptively adjusted based on a radius-control parameter and the stochastic gradient estimate. Under mild assumptions, we establish that the algorithm converges in expectation to a first-order stationary point. Moreover, the method achieves iteration and sample complexity bounds that match those of SVRG-based first-order methods, while allowing stochastic and potentially gradient-dependent second-order information. Extensive numerical experiments demonstrate that incorporating SVRG accelerates convergence, and that the use of trust-region methods and Hessian information further improves performance. We also highlight the impact of batch size and inner-loop length on efficiency, and show that the proposed method outperforms SGD and Adam on several machine learning tasks.
\end{abstract}
\section{Introduction}\label{intro}

We consider the unconstrained optimization problem
\begin{equation}\label{Problem1}
\min_{\boldsymbol{x} \in \mathbb{R}^{d}} f(\boldsymbol{x}) := \frac{1}{N} \sum_{i=1}^{N} f_i(\boldsymbol{x}),
\end{equation}
where each $f_i(\boldsymbol{x})$ denotes an individual component function and $f(\boldsymbol{x})$ represents the aggregate objective. Problems of the form \eqref{Problem1} arise ubiquitously in large-scale machine learning, where $f_i$ typically corresponds to the loss associated with a single data sample, and solving \eqref{Problem1} is commonly referred to as empirical risk minimization.

When both the sample size $N$ and the dimension $d$ are small to moderate, problem \eqref{Problem1} can be efficiently solved using deterministic optimization methods that directly access the full objective function $f$ and its gradients. However, such methods require evaluating the full gradient $\nabla f(\boldsymbol{x})$ or the objective value $f(\boldsymbol{x})$ at every iteration, incurring a computational cost that scales linearly with $N$. As a result, these approaches become prohibitively expensive when $N$ or $d$ is large, as is typical in modern machine learning applications.

To address this challenge, stochastic optimization methods have become the workhorse for large-scale learning. In these methods, only a single component function or a mini-batch of $f_i$'s is sampled at each iteration, and the gradient or objective value of $f$ is approximated using the sampled subset. This stochastic approximation significantly reduces per-iteration cost and enables scalable optimization. Among these methods, stochastic gradient descent (SGD) \citep{Robbins1951Stochastic,Bottou2018Optimization} is arguably the most widely used algorithm due to its simplicity and effectiveness. Numerous extensions of SGD have been proposed to improve empirical performance, including momentum-based methods \citep{Polyak1964Some,Liu2020Improved} and adaptive learning-rate schemes such as AdaGrad \citep{Duchi2011Adaptive} and Adam \citep{Kingma2014Adam}.

Despite their success, these methods rely primarily on first-order (gradient) information. In contrast, it is well known in deterministic optimization that incorporating second-order (Hessian) information can substantially accelerate convergence. In particular, under the Dennis--Moré condition \cite{Dennis1974characterization}, local convergence rates can improve from linear to superlinear or even quadratic \citep{Nocedal2006Numerical}. Moreover, second-order methods have demonstrated superior empirical performance in a variety of large-scale learning tasks, including language model training \citep{Liu2024Sophia} and computer vision applications \citep{Hollein20253dgs,lan20253dgs2}. Motivated by these observations, there has been growing interest in developing second-order or quasi-second-order methods for stochastic and large-scale optimization \citep{Byrd2011Use,Curtis2022Fully,Berahas2021Sequential,Fang2024Fully,Na2022adaptive}.

Most existing second-order methods fall into two broad algorithmic frameworks: line-search methods \citep[e.g.,][]{Berahas2021Sequential,Na2022adaptive} and trust-region methods \citep[e.g.,][]{Curtis2022Fully,Fang2024Fully}. Line-search methods first compute a search direction—often based on gradient or approximate Newton directions—and then determine a suitable step size to ensure sufficient descent. In contrast, trust-region methods jointly determine the search direction and step size by approximately minimizing a local model of the objective within a prescribed neighborhood, known as the trust region \citep{Conn2000Trust}. Trust-region methods are particularly appealing in nonconvex settings, as they do not require the Hessian approximation to be positive definite and often exhibit enhanced robustness and stability compared to line-search methods \citep[see, e.g.,][Chapter~4]{Nocedal2006Numerical}.

Another fundamental challenge in stochastic optimization is slow convergence caused by the high variance of stochastic gradient estimates. Such variance can lead to oscillatory behavior and large sample complexity, especially when high-accuracy solutions are required. To mitigate this issue, a rich body of work has developed variance reduction techniques that significantly improve convergence rates, including SVRG \citep{Johnson2013Accelerating}, SAG/SAGA \citep{Roux2012Stochastic,Defazio2014Saga}, SARAH \citep{Nguyen2017Sarah}, STORM \citep{Cutkosky2019Momentum}, and SPIDER \citep{Fang2018Spider}, all of which achieve improved sample complexity compared to SGD.

However, most existing variance reduction methods are developed in conjunction with SGD, where second-order information is largely unexplored. Only recently have variance reduction techniques been incorporated into stochastic line-search methods \citep{Berahas2023Accelerating}. Nevertheless, line-search frameworks typically require modifying the Hessian approximation to enforce positive definiteness, which can distort the underlying second-order information. In contrast, trust-region methods allow for the direct use of possibly indefinite curvature information. This observation naturally raises the following question:
\begin{center}
\textit{Can one design an efficient stochastic optimization algorithm that combines trust-region methods with variance reduction techniques?}
\end{center}

In this paper, we provide an affirmative answer to this question. Our main contributions are summarized as follows:
\begin{enumerate}
\item We propose a novel algorithm, \emph{Trust-Region with Stochastic Variance Reduction (TRSVR)}, which integrates the SVRG technique into a trust-region framework. The proposed method relies solely on stochastic gradient estimates and does not require function value evaluations. At each iteration, the trust-region radius is adaptively adjusted based on a radius-control parameter and the gradient estimate.

\item Under standard assumptions, we establish that TRSVR converges in expectation to a first-order stationary point. For nonconvex optimization, TRSVR achieves an $\mathcal{O}(\epsilon^{-2})$ iteration complexity and an $\mathcal{O}(N + N^{2/3}\epsilon^{-2})$ sample complexity, matching the optimal complexity attained by SVRG-based methods. Importantly, however, TRSVR operates in a more general setting: while classical SVRG analyses are restricted to first-order methods with fixed identity geometry, TRSVR allows stochastic, data-dependent Hessian matrices that may be correlated with the gradient estimate. Our analysis shows that SVRG-level complexity guarantees can be preserved despite this additional stochastic second-order structure, a regime not covered by existing variance reduction theory.

\item We evaluate the performance of TRSVR through extensive numerical experiments. The results demonstrate that (i) incorporating SVRG substantially accelerates convergence; (ii) trust-region methods and Hessian information further enhance the performance of variance reduction; (iii) the choice of batch size and inner-loop length plays a critical role in efficiency; and (iv) the proposed method outperforms SGD and Adam on several machine learning tasks.
\end{enumerate}

\textbf{Notation.}  We use $\|\cdot\|$ to denote $\ell_2$ norm for vectors and operator norm for matrices. For an integer $S>0$, we denote $[S]=\{1,2,\cdots, S\}$ and $[\tilde{S}]= \{0,1,\cdots, S-1\}$. We use $k$ to index outer loops and $s$ to index inner loops, and denote the iterate at the $k$-th outer loop and $s$-th inner loop as $\bx_{k,s}$. We use $g(\bx)$ to denote the full gradient. At $\bx_{k,s}$, we define $g_{k,s}\coloneqq g(\bx_{k,s})$. We use $\mathbb{E}_{k,s}[\cdot]$ to denote the expectation conditional on $\bx_{k,s}$.

\section{Related Work}\label{related_work}

\noindent\textbf{Stochastic second-order optimization methods.}
In deterministic optimization, it is well known that the proper incorporation of second-order information can substantially improve convergence rates \citep{Dennis1974characterization,Nocedal2006Numerical}. Motivated by the rapid growth of large-scale machine learning applications, there has been increasing interest in stochastic second-order optimization algorithms.

The existing literature primarily considers two stochastic frameworks: the \emph{fully stochastic} setup and the \emph{random model} setup. The fully stochastic setup is commonly adopted in the analysis of SGD-type methods, where gradient estimators are assumed to be unbiased with bounded variance. In contrast, the random model setup requires gradient and function value estimates to satisfy adaptive accuracy conditions with a fixed probability; in this setting, gradient estimators may be biased and exhibit unbounded variance. For algorithms under the fully stochastic setup, we refer the reader to \citep{Berahas2021Sequential,Fang2024Fully,Curtis2022Fully,Na2025Statistical,Kuang2025Online}. Representative results under the random model setup can be found in \citep{Blanchet2019Convergence,Fang2024Trust,Fang2025High,Na2022adaptive,Na2023Inequality}. In this work, we adopt the fully stochastic setup, as algorithms developed in this framework typically enjoy more favorable sample complexity guarantees.

\noindent\textbf{Variance reduction techniques.}
A broad class of variance reduction techniques has been developed to reduce the sample complexity of stochastic optimization methods. For $\mu$-strongly convex functions with $L$-Lipschitz continuous gradients, vanilla SGD requires $\mathcal{O}(1/\epsilon)$ samples to achieve an $\epsilon$-optimal solution, whereas variance-reduced methods such as SVRG, SAG/SAGA, and SARAH achieve linear convergence with sample complexity $\mathcal{O}((N + \kappa)\log(1/\epsilon))$, where $\kappa = L/\mu$ denotes the condition number. For general convex objectives, SGD exhibits a sample complexity of $\mathcal{O}(1/\epsilon^2)$, while SVRG achieves $\mathcal{O}(N + \sqrt{N}/\epsilon)$, SAGA attains $\mathcal{O}(N + N/\epsilon)$, and SARAH achieves $\mathcal{O}((N + 1/\epsilon)\log(1/\epsilon))$. In the nonconvex setting, SVRG attains a sample complexity of $\mathcal{O}(N + N^{2/3}/\epsilon)$. In the online setting, STORM achieves a sample complexity of $\mathcal{O}(\epsilon^{-3/2})$, while SPIDER unifies several variance reduction schemes and attains $\mathcal{O}(\epsilon^{-3/2})$ complexity in the online setting and $\mathcal{O}(N + N^{1/2}\epsilon^{-1})$ in the finite-sum setting. In this work, we focus on SVRG due to its widespread adoption and suitability for nonconvex finite-sum optimization.
\section{Algorithm Design}\label{algo_design}

In this section, we describe the design of TRSVR, summarized in Algorithm~\ref{algo_TR-SVR}. The method follows the nested-loop structure typical of SVRG-type algorithms. Specifically, at each outer iteration, the full gradient is computed using all samples, while at each inner iteration, a mini-batch of samples is used to form a stochastic gradient estimate.

We denote the iterate at the beginning of the $k$-th outer iteration by $\bx_{k,0}$ and refer to it as the \emph{reference iterate}. The full gradient at the reference iterate is computed as
\vspace{-0.15cm}
\begin{equation}\label{full_gradient}
    g_{k,0} = \frac{1}{N} \sum_{i=1}^{N} \nabla f_i(\bx_{k,0}),
\end{equation}
which is evaluated only once per outer iteration and serves as a control variate for variance reduction.

The algorithm then enters the inner loop of length $S$. At each inner iteration $s$, a mini-batch $I_{k,s} \subset [N]$ of size $b$ is sampled, and the corresponding mini-batch gradient estimator at $\bx_{k,s}$ is computed as
\vspace{-0.1cm}
\begin{equation}\label{mini_batch_gradient}
    \tilde{g}_{k,s} = \frac{1}{b} \sum_{i \in I_{k,s}} \nabla f_i(\bx_{k,s}).
\end{equation}
Using the SVRG estimator, we then form the variance-reduced gradient
\begin{align}\label{var_reduced_grad}
\barg_{k,s}
&= \frac{1}{b} \sum_{i \in I_{k,s}} \Big( \nabla f_i(\bx_{k,s}) - \nabla f_i(\bx_{k,0}) \Big) + g_{k,0} \notag \\
&= \tilde{g}_{k,s} - \big( \tilde{g}_{k,0} - g_{k,0} \big),
\end{align}
which is an unbiased estimator of the full gradient at $\bx_{k,s}$ with reduced variance.

Based on the variance-reduced gradient, we define the trust-region radius as
\begin{equation}\label{trust-region radius}
    \Delta_{k,s} = \alpha \|\barg_{k,s}\|,
\end{equation}
where $\alpha > 0$ is a user-specified radius-control parameter.

\begin{algorithm}[t]
\caption{Trust-Region Method with Stochastic Variance Reduction (TRSVR)}
\label{algo_TR-SVR}
\begin{algorithmic}[1]
\STATE \textbf{Input:} Initialization $\bx_{0,0}$, batch size $b \in [1, N]$, inner-loop length $S > 0$, radius-control parameter $\alpha > 0$.
        
\FOR{$k = 0, 1, 2, \dots$} 
\STATE Set $\bx_{k,0} = \bx_{k-1,S}$ (if $k > 0$) or initialize $\bx_{k,0}$.
\STATE Compute full gradient $g_{k,0}$ following \eqref{full_gradient}.
\FOR{$s = 0$ to $S-1$}
\STATE Select a mini-batch $I_{k,s}$ and compute the mini-batch gradient estimate $\tilde{g}_{k,s}$ following \eqref{mini_batch_gradient}.
                
\STATE Compute variance-reduced gradient $\barg_{k,s}$ as in \eqref{var_reduced_grad}.
\STATE Generate the trust-region radius $\Delta_{k,s}$ as in \eqref{trust-region radius}.
\STATE Construct $H_{k,s}$ and solve \eqref{trust-region subproblem}
to obtain $\Delta \bx_{k,s}$.
                
\STATE Update the iterate as $\bx_{k,s+1} = \bx_{k,s} + \Delta \bx_{k,s}$.
\ENDFOR
            
\STATE Update the iterate as $\bx_{k+1,0} = \bx_{k,S}$.
\ENDFOR
\end{algorithmic}
\end{algorithm}

At each inner iteration, we construct a Hessian approximation $H_{k,s}$ and compute the step $\Delta \bx_{k,s}$ by solving the trust-region subproblem
\begin{equation}\label{trust-region subproblem}
  \begin{aligned}
    \min_{\Delta \bx \in \mathbb{R}^d} \quad
    m(\Delta \bx)
    &= \barg_{k,s}^T \Delta \bx
      + \frac{1}{2} \Delta \bx^T H_{k,s} \Delta \bx \\
    \text{s.t.} \quad
    & \|\Delta \bx\| \leq \Delta_{k,s}.
  \end{aligned}
\end{equation}
\begin{remark}
We allow the Hessian approximation $H_{k,s}$ to be stochastic and potentially dependent on the gradient estimator $\barg_{k,s}$. This setting goes beyond existing variance reduction analyses, which are largely restricted to first-order methods and effectively assume $H_{k,s}=I$. Recent work incorporating variance reduction into line-search methods \citep{Berahas2023Accelerating} still requires $H_{k,s}$ to be conditionally independent of $\barg_{k,s}$.
\end{remark}

We do not require \eqref{trust-region subproblem} to be solved exactly. Instead, we assume that the computed step $\Delta \bx_{k,s}$ achieves at least as much reduction in the model $m(\cdot)$ as the Cauchy point \citep[see][]{Nocedal2006Numerical}. Under this condition, the following standard result holds (see, e.g., \citealp{Curtis2022Fully,Fang2024Fully}).
\begin{lemma}[Cauchy Reduction]\label{lemma:Cauchy reduction}
Let $\Delta \bx_{k,s}$ be an approximate solution to \eqref{trust-region subproblem} that achieves at least the Cauchy decrease. Then, for all $k \ge 0$ and $s \in [\tilde{S}]$, we have
\vspace{-0.1cm}
\begin{align*}
m(\Delta \bx_{k,s}) - m(\boldsymbol{0})
&= \barg_{k,s}^T \Delta \bx_{k,s}
+ \frac{1}{2} \Delta \bx_{k,s}^T H_{k,s} \Delta \bx_{k,s} \\
&\le - \|\barg_{k,s}\| \Delta_{k,s}
+ \frac{1}{2} \|H_{k,s}\| \Delta_{k,s}^2.
\end{align*}
\end{lemma}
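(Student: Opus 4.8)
Fix $k \ge 0$ and $s \in [\tilde{S}]$. The plan is to reduce the statement to comparing the realized model value $m(\Delta\bx_{k,s})$ against the model value at one explicitly chosen feasible point of \eqref{trust-region subproblem}, and then to invoke the Cauchy-decrease hypothesis. First, observe that $m(\boldsymbol{0}) = 0$, so the displayed identity is just the definition of $m$ evaluated at $\Delta\bx_{k,s}$, and the claimed inequality reduces to the single bound $m(\Delta\bx_{k,s}) \le -\|\barg_{k,s}\|\Delta_{k,s} + \tfrac12\|H_{k,s}\|\Delta_{k,s}^2$.

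Next I would dispose of the trivial case: if $\barg_{k,s} = \boldsymbol{0}$, then $\Delta_{k,s} = \alpha\|\barg_{k,s}\| = 0$ by \eqref{trust-region radius}, so the feasible region of \eqref{trust-region subproblem} is $\{\boldsymbol{0}\}$, hence $\Delta\bx_{k,s} = \boldsymbol{0}$ and both sides of the inequality equal $0$. Otherwise, set $p_{k,s} := -\Delta_{k,s}\,\barg_{k,s}/\|\barg_{k,s}\|$; this vector has norm exactly $\Delta_{k,s}$, hence is feasible, and it is precisely the point at which the steepest-descent ray $\{-t\,\barg_{k,s}: t\ge 0\}$ exits the trust region. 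Since the Cauchy point minimizes $m$ along that ray within the trust region, the model value at the Cauchy point is at most $m(p_{k,s})$; and because $\Delta\bx_{k,s}$ achieves at least the Cauchy decrease, $m(\Delta\bx_{k,s}) \le m(p_{k,s})$.

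It then remains to bound $m(p_{k,s})$. Expanding $m$ gives $m(p_{k,s}) = -\Delta_{k,s}\|\barg_{k,s}\| + \tfrac12 \Delta_{k,s}^2\,\barg_{k,s}^T H_{k,s}\barg_{k,s}/\|\barg_{k,s}\|^2$, and the curvature term is controlled via $\barg_{k,s}^T H_{k,s}\barg_{k,s} \le \|H_{k,s}\|\,\|\barg_{k,s}\|^2$, which follows from Cauchy--Schwarz together with the definition of the operator norm. Substituting yields exactly the claimed bound. I do not expect a genuine obstacle here: this is the textbook Cauchy-point estimate specialized to the radius rule \eqref{trust-region radius}. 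The only point deserving a line of care is making the phrase "achieves at least the Cauchy decrease" precise --- that is, that the realized model reduction dominates the reduction obtained by the one-dimensional minimizer of $m$ along $-\barg_{k,s}$ restricted to $\|\Delta\bx\|\le\Delta_{k,s}$ --- so that the comparison with the specific boundary point $p_{k,s}$ is licensed.
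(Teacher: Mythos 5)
Your proposal is correct. The paper does not actually prove this lemma --- it is stated as a standard consequence of the Cauchy-decrease condition with a citation to \citet{Curtis2022Fully} and \citet{Fang2024Fully} --- and your argument (handle $\barg_{k,s}=\boldsymbol{0}$ separately, compare against the boundary point $-\Delta_{k,s}\barg_{k,s}/\|\barg_{k,s}\|$ via the Cauchy point's optimality along the steepest-descent ray, then bound the curvature term by $\|H_{k,s}\|\,\|\barg_{k,s}\|^2$) is exactly the standard derivation those references rely on, so nothing is missing.
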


\section{Algorithm Analysis}\label{algo_analysis}

In this section, we establish the theoretical properties of the proposed algorithm. We begin by stating the assumptions.

\begin{assumption}\label{assump1}
All iterates $\bx_{k,s}$ lie in an open convex set $\mathcal{X} \subseteq \mathbb{R}^d$. The objective $f: \mathbb{R}^d \to \mathbb{R}$ is bounded below by $f_{\inf}$. For each $i \in [N]$, the component function $f_i : \mathbb{R}^d \to \mathbb{R}$ is continuously differentiable with $L$-Lipschitz continuous gradient. For all $k \geq 0$ and $s \in [\tilde{S}]$, the Hessian approximation $H_{k,s}$ satisfies $\|H_{k,s}\| \leq \kappa_H$ for some constant $\kappa_H$.
\end{assumption}

\begin{assumption}\label{assump2}
The mini-batch gradient estimator $\tilde{g}_{k,s}$ is unbiased, i.e.,
$\mathbb{E}[\tilde{g}_{k,s} \mid \bx_{k,s}] = g_{k,s}$.
\end{assumption}

\begin{remark}
Assumption \ref{assump1} implies that the aggregate objective function $f$ is continuously differentiable with $L$-Lipschitz continuous gradient. The condition in Assumption \ref{assump2} is satisfied, for example, when each index in the mini-batch $I_{k,s} \subset [N]$ is sampled uniformly at random.
\end{remark}

Next, we define the first-order $\epsilon$-stationary point based on the expected squared norm of the full gradient.
\begin{definition}
   A point $\bx$ is said to be a first-order $\epsilon$-stationary point if the full gradient $g(\bx)$ satisfies 
   \begin{equation*}
       \mathbb{E}[\|g(\bx)\|^2]\leq \epsilon.   
    \end{equation*}
\end{definition}

\subsection{Main theorem}
With the above assumptions, we establish the main theorem.
\begin{theorem}[Global Convergence]\label{thm:Global Convergence_main}
Under Assumptions \ref{assump1} and \ref{assump2}, we select
$\alpha = \frac{\mu_0 b}{2(L+2\kappa_H)N^\gamma}\in(0,1]$,  $b =\mu_1  N^{\gamma}$ with $\mu_0,\mu_1,\gamma\in(0,1]$, and $S \leq \bigg\lfloor\frac{N^{3\gamma/2}}{\mu_0(b+\frac{\mu_0L^2b}{2(L+2\kappa_H)})}\bigg\rfloor$. Then for all $k\geq 0$ and $s\in [\tilde{S}]$, there exist constants $\mu_0$ and $v_0 \in (0, 1)$ such that
\begin{align*}
    & \mathbb{E}\left[\frac{1}{(K+1)S}\sum_{k=0}^K\sum_{s=0}^{S-1}\|g_{k,s}\|^2\right] \\
    & \qquad\qquad\qquad \leq \frac{2(L+\kappa_H) (\mathbb{E}[f(\bx_{0,0})] - f_{\inf})}{(K+1)S\mu_0 \mu_1  v_0}.
\end{align*}

\end{theorem}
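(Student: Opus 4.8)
The plan is to prove a one-step sufficient-decrease inequality for the aggregate objective $f$, aggregate it across an inner loop by pairing it with the SVRG variance bound, and then telescope across outer loops. First I would use that Assumption~\ref{assump1} makes $f$ itself $L$-smooth, so that $f(\bx_{k,s+1}) \le f(\bx_{k,s}) + g_{k,s}^T \Delta\bx_{k,s} + \tfrac{L}{2}\|\Delta\bx_{k,s}\|^2$. Adding and subtracting $\tfrac12 \Delta\bx_{k,s}^T H_{k,s}\Delta\bx_{k,s}$ turns the linear-plus-quadratic part into the model value $m(\Delta\bx_{k,s})$ plus an error term; Lemma~\ref{lemma:Cauchy reduction} bounds $m(\Delta\bx_{k,s})$ by $-\|\barg_{k,s}\|\Delta_{k,s} + \tfrac{\kappa_H}{2}\Delta_{k,s}^2$, while the error is controlled by $\|H_{k,s}\|\le\kappa_H$ and $\|\Delta\bx_{k,s}\|\le\Delta_{k,s}=\alpha\|\barg_{k,s}\|$, giving
\[
f(\bx_{k,s+1}) \le f(\bx_{k,s}) - \alpha\|\barg_{k,s}\|^2 + \tfrac{(L+2\kappa_H)\alpha^2}{2}\|\barg_{k,s}\|^2 + (g_{k,s}-\barg_{k,s})^T\Delta\bx_{k,s}.
\]
The reason for setting $\alpha=\tfrac{\mu_0 b}{2(L+2\kappa_H)N^\gamma}$ with $b=\mu_1 N^\gamma$ is precisely that then $(L+2\kappa_H)\alpha=\tfrac{\mu_0\mu_1}{2}\le\tfrac12$, so the second term is dominated by a constant fraction of the first.

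The delicate term is $(g_{k,s}-\barg_{k,s})^T\Delta\bx_{k,s}$. In SGD-based SVRG the step is $-\alpha\barg_{k,s}$, so after conditioning this term equals the variance up to sign and is handled exactly; here $\Delta\bx_{k,s}$ is a genuinely nonlinear function of $\barg_{k,s}$ (through the trust-region solve and through $H_{k,s}$, which may depend on $\barg_{k,s}$), so no such identity is available. Instead I would bound it crudely, $(g_{k,s}-\barg_{k,s})^T\Delta\bx_{k,s}\le\alpha\|g_{k,s}-\barg_{k,s}\|\,\|\barg_{k,s}\|\le\tfrac{\alpha}{2}\|g_{k,s}-\barg_{k,s}\|^2+\tfrac{\alpha}{2}\|\barg_{k,s}\|^2$, take $\mathbb{E}_{k,s}[\cdot]$, and use Assumption~\ref{assump2} in the form $\mathbb{E}_{k,s}[\|\barg_{k,s}\|^2]=\|g_{k,s}\|^2+\mathbb{E}_{k,s}[\|\barg_{k,s}-g_{k,s}\|^2]$ to reach
\[
\mathbb{E}_{k,s}[f(\bx_{k,s+1})]\le f(\bx_{k,s}) - c_1\alpha\|g_{k,s}\|^2 + c_2\alpha\,\mathbb{E}_{k,s}\big[\|\barg_{k,s}-g_{k,s}\|^2\big]
\]
for explicit constants $c_1,c_2>0$.

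Next I would bound the SVRG variance and the inner-loop drift. Since $\barg_{k,s}$ is a $b$-sample average with conditional mean $g_{k,s}$, the usual sample-mean variance computation together with $L$-Lipschitzness of each $\nabla f_i$ gives $\mathbb{E}_{k,s}[\|\barg_{k,s}-g_{k,s}\|^2]\le\tfrac{L^2}{b}\|\bx_{k,s}-\bx_{k,0}\|^2$; writing $\bx_{k,s}-\bx_{k,0}=\sum_{t=0}^{s-1}\Delta\bx_{k,t}$ with $\|\Delta\bx_{k,t}\|\le\alpha\|\barg_{k,t}\|$ and applying Cauchy--Schwarz gives $\|\bx_{k,s}-\bx_{k,0}\|^2\le s\alpha^2\sum_{t=0}^{s-1}\|\barg_{k,t}\|^2$. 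Summing the displayed inequality over $s=0,\dots,S-1$ inside outer loop $k$, taking full expectation, and setting $G_k:=\sum_s\mathbb{E}[\|g_{k,s}\|^2]$, $V_k:=\sum_s\mathbb{E}[\|\barg_{k,s}-g_{k,s}\|^2]$, $A_k:=G_k+V_k$, the drift estimate yields the coupled inequality $V_k\le\tfrac{L^2\alpha^2 S^2}{2b}A_k$. The constraint $S\le\lfloor N^{3\gamma/2}/(\mu_0(b+\tfrac{\mu_0 L^2 b}{2(L+2\kappa_H)}))\rfloor$ is exactly what makes $\beta:=\tfrac{L^2\alpha^2 S^2}{2b}$ a constant small enough (below $c_1/(c_1+c_2)$) that $V_k\le\tfrac{\beta}{1-\beta}G_k$ and $v_0:=1-\tfrac{c_2}{c_1}\cdot\tfrac{\beta}{1-\beta}\in(0,1)$; substituting back gives $\mathbb{E}[f(\bx_{k,S})]\le\mathbb{E}[f(\bx_{k,0})] - c_1 v_0\,\alpha\,G_k$. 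Because $\bx_{k+1,0}=\bx_{k,S}$, telescoping over $k=0,\dots,K$ and using $f\ge f_{\inf}$ gives $\sum_{k=0}^K\sum_{s=0}^{S-1}\mathbb{E}[\|g_{k,s}\|^2]\le(\mathbb{E}[f(\bx_{0,0})]-f_{\inf})/(c_1 v_0\alpha)$; dividing by $(K+1)S$ and plugging in $\alpha=\tfrac{\mu_0\mu_1}{2(L+2\kappa_H)}$ produces the stated bound once the numerical constants of the first two paragraphs are tracked.

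I expect the main obstacle to be the coupling in the third paragraph. The variance of $\barg_{k,s}$ is governed by the drift $\|\bx_{k,s}-\bx_{k,0}\|$, which depends on all earlier inner-loop stochastic gradients, so there is no self-contained one-step recursion in $G_k$; one must work at the scale of a whole inner loop and close the coupled inequality between $V_k$ and $A_k=G_k+V_k$, which succeeds only because of the precise scaling $\alpha=\Theta(b/N^\gamma)$, $b=\mu_1 N^\gamma$, $S=O(N^{\gamma/2})$ (and this is ultimately what pins $\gamma=2/3$ for the optimal $\mathcal{O}(N+N^{2/3}\epsilon^{-2})$ sample complexity). A second, more conceptual, difficulty is the cross term $(g_{k,s}-\barg_{k,s})^T\Delta\bx_{k,s}$ of the second paragraph: this is precisely where the argument must leave the comfort of existing SVRG theory, since for $H_{k,s}=I$ (or $H_{k,s}$ conditionally independent of $\barg_{k,s}$) this term is benign, whereas here one can only afford the Cauchy--Schwarz bound and must verify that the extra constant it contributes to the variance term is tolerated by the parameter choices.
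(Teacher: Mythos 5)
Your proposal is correct in substance and its first two stages coincide with the paper's own Lemmas \ref{lem:exp_decrease} and \ref{lem:Variance Bound}: the same $L$-smoothness-plus-Cauchy-decrease expansion, the same Cauchy--Schwarz/Young treatment of the cross term $(g_{k,s}-\barg_{k,s})^T\Delta\bx_{k,s}$ (arriving at exactly the coefficient $\tfrac12(L+2\kappa_H)\alpha^2$ on the variance), and the same sample-mean bound $\mathbb{E}_{k,s}\|\barg_{k,s}-g_{k,s}\|^2\le \tfrac{L^2}{b}\|\bx_{k,s}-\bx_{k,0}\|^2$. Where you genuinely diverge is in closing the inner-loop coupling. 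The paper builds a per-step Lyapunov function $\Phi_{k,s}=f(\bx_{k,s})+\lambda_s\|\bx_{k,s}-\bx_{k,0}\|^2$ with a backward recursion for $\lambda_s$, controlling the drift one step at a time via $\mathbb{E}_{k,s}\|\bx_{k,s+1}-\bx_{k,0}\|^2\le(1+\tfrac{1}{\alpha z})\alpha^2\mathbb{E}_{k,s}\|\barg_{k,s}\|^2+(1+\alpha z)\|\bx_{k,s}-\bx_{k,0}\|^2$ and a free Young parameter $z\sim N^{-\gamma/2}\sim 1/S$; you instead unroll $\bx_{k,s}-\bx_{k,0}=\sum_{t<s}\Delta\bx_{k,t}$, pay the factor $s$ from Cauchy--Schwarz, and close a single per-epoch coupled inequality $V_k\le\beta(G_k+V_k)$ with $\beta=\tfrac{L^2\alpha^2S^2}{2b}$. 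Both devices incur the same $S^2$-type loss and hence force the same scaling $S\lesssim N^{\gamma/2}$, so the two routes are interchangeable at the level of the stated complexity; yours is more elementary and avoids the recursive $\lambda_s$ bookkeeping, while the paper's Lyapunov telescoping handles the $s$-dependence of the drift coefficient more gracefully and is the form needed to import the Reddi et al.\ corollary verbatim. One point you should make explicit when tracking constants: closing your coupled inequality requires $\beta<\tfrac{1}{1+\mu_0\mu_1}$, and since the admissible $S$ makes $\beta\lesssim \tfrac{L^2}{8\mu_1(L+2\kappa_H)^2}$ (the $\mu_0$ cancels), this is a lower-bound condition on $\mu_1$ rather than something you can buy by shrinking $\mu_0$; this mirrors, but is not identical to, the paper's condition $v_0>0$, which is instead enforced by taking $\mu_0$ small.
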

This result implies several important properties of TRSVR. First, letting $K \to \infty$, the expected average squared gradient norm converges to zero, which shows that a subsequence of iterates generated by TRSVR converges in expectation to a first-order stationary point. Second, TRSVR enjoys an $\mathcal{O}(1/\epsilon)$ iteration complexity for finding a first-order $\epsilon$-stationary point. The corresponding sample complexity is stated below.

\begin{corollary}
Under the assumptions of Theorem \ref{thm:Global Convergence_main} with $\gamma = 2/3$, TRSVR achieves a sample complexity of $\mathcal{O}(N + N^{2/3}\epsilon^{-1})$.
\end{corollary}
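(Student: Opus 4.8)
The proof is an accounting argument: combine the per-iteration sample cost of TRSVR with the iteration-complexity bound already established in Theorem~\ref{thm:Global Convergence_main}. First I would tally the component-gradient evaluations. In the $k$-th outer loop, forming the full gradient $g_{k,0}$ via \eqref{full_gradient} costs $N$ evaluations. In each of the $S$ inner iterations, forming $\barg_{k,s}$ via \eqref{var_reduced_grad} costs at most $2b$ evaluations (the gradients $\nabla f_i(\bx_{k,s})$ and $\nabla f_i(\bx_{k,0})$ for $i\in I_{k,s}$), and constructing $H_{k,s}$ and solving \eqref{trust-region subproblem} to Cauchy accuracy adds at most $O(b)$ further work (a mini-batch curvature model or a few Hessian–vector products on $I_{k,s}$). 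Hence each outer loop consumes $O(N+bS)$ samples, and $K+1$ outer loops cost $O\big((K+1)N + bT\big)$, where $T:=(K+1)S$ is the total number of inner iterations.

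Next I would turn Theorem~\ref{thm:Global Convergence_main} into a bound on $T$. Since $\mu_0,\mu_1,v_0$ are fixed constants and $2(L+\kappa_H)(\mathbb{E}[f(\bx_{0,0})]-f_{\inf})$ is a constant, the theorem's right-hand side is $\Theta(1/T)$; thus choosing $T\ge C\epsilon^{-1}$ for a suitable constant $C$ forces $\mathbb{E}\big[\tfrac{1}{(K+1)S}\sum_{k,s}\|g_{k,s}\|^2\big]\le\epsilon$, so the running average is a first-order $\epsilon$-stationary point. Accordingly I would run TRSVR with $K+1=\lceil C\epsilon^{-1}/S\rceil$, giving $T=\Theta(\epsilon^{-1}+S)$ and $K+1=\Theta(1+\epsilon^{-1}/S)$.

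Finally I would substitute the prescribed parameter scalings at $\gamma=2/3$: then $b=\mu_1N^{2/3}=\Theta(N^{2/3})$, and the admissible inner-loop length obeys $S=\Theta\big(N^{3\gamma/2}/b\big)=\Theta(N^{\gamma/2})=\Theta(N^{1/3})$ (the two $b$-terms in the denominator of the $S$-bound differ only by a constant). Plugging in, $bT=\Theta(N^{2/3})\cdot\Theta(\epsilon^{-1}+N^{1/3})=O(N^{2/3}\epsilon^{-1}+N)$ and $(K+1)N=\Theta(1+\epsilon^{-1}/N^{1/3})\cdot N=\Theta(N+N^{2/3}\epsilon^{-1})$; adding the two contributions gives the claimed $\mathcal{O}(N+N^{2/3}\epsilon^{-1})$.

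The computation itself is routine; the points requiring care are (i) the $+1$ in $K+1=\lceil C\epsilon^{-1}/S\rceil$, which is what produces the stand-alone $N$ term and is genuinely needed when $\epsilon^{-1}\lesssim N^{1/3}$, since at least one full-gradient pass is unavoidable; and (ii) treating $S$ as $\Theta(N^{1/3})$ rather than merely $O(N^{1/3})$ — the inner loop must be run long enough to amortize the $O(N)$ full-gradient step, so the bound $S\le\lfloor\cdots\rfloor$ should be taken near equality, and one should check the floor does not reduce its order (true once $N$ is bounded below by a constant). If Hessian–vector products are instead counted at full dimension, the rate is unaffected as long as that cost is $O(b)$ per inner step, which holds for the mini-batch curvature models TRSVR uses.
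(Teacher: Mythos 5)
Your proposal is correct and follows essentially the route the paper intends: the paper gives no explicit proof of this corollary, deferring to the analysis of Corollary~2 in \citet{Reddi2016Stochastic}, and that analysis is exactly your accounting argument --- $\mathcal{O}(N+bS)$ component-gradient evaluations per outer loop, $T=(K+1)S=\Theta(\epsilon^{-1})$ total inner iterations from the theorem's $\Theta(1/T)$ rate, and the scalings $b=\Theta(N^{2/3})$, $S=\Theta(N^{1/3})$ at $\gamma=2/3$. Your two flagged subtleties (the ceiling producing the stand-alone $N$ term, and taking $S$ at the order of its upper bound so the full-gradient pass is amortized) are precisely the points that make the bound tight, so nothing is missing.
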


The above result can be derived using an analysis similar to \citep[][Corollary~2]{Reddi2016Stochastic}, which matches the sample complexity of SVRG combined with SGD. Importantly, our analysis allows the Hessian approximation to be stochastic and potentially dependent on the gradient estimator. This demonstrates that SVRG-level sample complexity can be preserved even in the presence of stochastic second-order geometry, a regime not covered by existing variance reduction results.

\subsection{Analysis outline}

In this subsection, we outline the analysis of TRSVR, and defer the detailed analysis to Appendix \ref{append_B}.

Due to the nested-loop structure of the algorithm, our analysis first considers the reduction in objective function for one inner iteration in expectation.
\begin{lemma}[One-Step Expected Reduction]\label{lem:exp_decrease}
Under Assumptions \ref{assump1} and \ref{assump2}, suppose that $\alpha \leq \frac{1}{2(L + 2\kappa_H)}$. Then for all $k\geq 0, s\in [\tilde{S}]$, we have
\begin{align*}
     & \mathbb{E}_{k,s}[f(\bx_{k,s+1})] - f(\bx_{k,s}) \\
     & \leq -\frac{1}{4}\alpha\|g_{k,s}\|^2 + \frac{1}{2}(L + 2\kappa_H)\alpha^2\mathbb{E}_{k,s}[\|\barg_{k,s} - g_{k,s}\|^2].
\end{align*}
\end{lemma}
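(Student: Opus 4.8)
The plan is to combine the descent lemma (from the $L$-smoothness of $f$, which holds by Assumption~\ref{assump1}) with the Cauchy-reduction guarantee of Lemma~\ref{lemma:Cauchy reduction}, and then to reduce every term to the two scalars $\|\barg_{k,s}\|^2$ and $\|\barg_{k,s}-g_{k,s}\|^2$ \emph{before} taking any conditional expectation. Concretely, I would begin from
\begin{equation*}
f(\bx_{k,s+1}) \le f(\bx_{k,s}) + \langle g_{k,s},\Delta\bx_{k,s}\rangle + \tfrac{L}{2}\|\Delta\bx_{k,s}\|^2,
\end{equation*}
split $\langle g_{k,s},\Delta\bx_{k,s}\rangle = \langle \barg_{k,s},\Delta\bx_{k,s}\rangle + \langle g_{k,s}-\barg_{k,s},\Delta\bx_{k,s}\rangle$, and handle the first inner product via Lemma~\ref{lemma:Cauchy reduction}: moving the Hessian term to the right-hand side and using $\|H_{k,s}\|\le\kappa_H$ together with the trust-region constraint $\|\Delta\bx_{k,s}\|\le\Delta_{k,s}$ gives $\langle\barg_{k,s},\Delta\bx_{k,s}\rangle \le -\|\barg_{k,s}\|\Delta_{k,s} + \kappa_H\Delta_{k,s}^2$. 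For the second inner product I would use Cauchy--Schwarz and again $\|\Delta\bx_{k,s}\|\le\Delta_{k,s}$ to obtain $\langle g_{k,s}-\barg_{k,s},\Delta\bx_{k,s}\rangle \le \|g_{k,s}-\barg_{k,s}\|\,\Delta_{k,s}$.

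Substituting $\Delta_{k,s}=\alpha\|\barg_{k,s}\|$ and $\|\Delta\bx_{k,s}\|^2\le\Delta_{k,s}^2$ then yields the pointwise estimate
\begin{equation*}
f(\bx_{k,s+1}) - f(\bx_{k,s}) \le -\alpha\|\barg_{k,s}\|^2 + \alpha\|g_{k,s}-\barg_{k,s}\|\,\|\barg_{k,s}\| + \tfrac{L+2\kappa_H}{2}\alpha^2\|\barg_{k,s}\|^2 .
\end{equation*}
The decisive step is Young's inequality applied to the cross term with the constant $1$, i.e.\ $\alpha\|g_{k,s}-\barg_{k,s}\|\,\|\barg_{k,s}\| \le \tfrac{\alpha}{2}\|\barg_{k,s}\|^2 + \tfrac{\alpha}{2}\|g_{k,s}-\barg_{k,s}\|^2$; after collecting terms this gives $f(\bx_{k,s+1})-f(\bx_{k,s}) \le -\alpha\bigl(\tfrac12-\tfrac{(L+2\kappa_H)\alpha}{2}\bigr)\|\barg_{k,s}\|^2 + \tfrac{\alpha}{2}\|g_{k,s}-\barg_{k,s}\|^2$. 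The hypothesis $\alpha\le\tfrac{1}{2(L+2\kappa_H)}$ ensures $\tfrac12-\tfrac{(L+2\kappa_H)\alpha}{2}\ge\tfrac14>0$, so the coefficient of $\|\barg_{k,s}\|^2$ is negative.

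I would finish by taking the conditional expectation $\mathbb{E}_{k,s}[\cdot]$. Since $\barg_{k,s}$ is an unbiased estimator of $g_{k,s}$ given $\bx_{k,s}$ — this follows from Assumption~\ref{assump2} applied to the SVRG estimator \eqref{var_reduced_grad}, as $\mathbb{E}_{k,s}[\barg_{k,s}]=(g_{k,s}-g_{k,0})+g_{k,0}=g_{k,s}$ — the bias–variance identity gives $\mathbb{E}_{k,s}[\|\barg_{k,s}\|^2]=\|g_{k,s}\|^2+\mathbb{E}_{k,s}[\|\barg_{k,s}-g_{k,s}\|^2]$, and substituting this (in the correct direction, thanks to the negative coefficient) turns the $\|g_{k,s}\|^2$-coefficient into $-\alpha\bigl(\tfrac12-\tfrac{(L+2\kappa_H)\alpha}{2}\bigr)\le-\tfrac{\alpha}{4}$ and the variance coefficient into $-\alpha\bigl(\tfrac12-\tfrac{(L+2\kappa_H)\alpha}{2}\bigr)+\tfrac{\alpha}{2}=\tfrac{(L+2\kappa_H)\alpha^2}{2}$, which is exactly the asserted bound.

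The main obstacle is that $\Delta\bx_{k,s}$, the radius $\Delta_{k,s}$, and (by design) $H_{k,s}$ all depend on the stochastic gradient $\barg_{k,s}$, so one cannot take expectations of the inner products directly and exploit $\mathbb{E}_{k,s}[g_{k,s}-\barg_{k,s}]=0$; this is precisely why everything must first be bounded by $\|\barg_{k,s}\|^2$ and $\|\barg_{k,s}-g_{k,s}\|^2$ via worst-case (Cauchy--Schwarz/Young) arguments. A related subtlety is that the Young constant must be taken to be $1$: any other choice $\tau$ would leave a residual of size $\tfrac{\alpha}{2}(\tau+\tau^{-1}-2)\,\mathbb{E}_{k,s}[\|\barg_{k,s}-g_{k,s}\|^2]\ge 0$, spoiling the exact $\tfrac12(L+2\kappa_H)\alpha^2$ coefficient on the variance term.
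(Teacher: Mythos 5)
Your proposal is correct and follows essentially the same route as the paper: the $L$-smoothness descent inequality combined with the Cauchy-reduction bound of Lemma~\ref{lemma:Cauchy reduction}, Cauchy--Schwarz on the $(g_{k,s}-\barg_{k,s})$ cross term, Young's inequality with constant $1$, and finally the conditional bias--variance identity $\mathbb{E}_{k,s}[\|\barg_{k,s}\|^2]=\|g_{k,s}\|^2+\mathbb{E}_{k,s}[\|\barg_{k,s}-g_{k,s}\|^2]$ together with $\alpha\le\tfrac{1}{2(L+2\kappa_H)}$. The only cosmetic difference is that the paper factors the pointwise estimate into a separate preliminary lemma before specializing $\Delta_{k,s}=\alpha\|\barg_{k,s}\|$, whereas you do it in one pass; your remarks on why the worst-case bounds must precede the expectation and why the Young constant must be $1$ accurately reflect the structure of the paper's argument.
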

Then we focus on the variance term $\mathbb{E}_{k,s}[\|\barg_{k,s} - g_{k,s}\|^2]$. We demonstrate that SVRG enables us to upper-bound it by the squared distance between the current iterate (i.e., $\bx_{k,s}$) and the reference iterate (i.e., $\bx_{k,0}$). 
\begin{lemma}[Upper Bound of Variance]\label{lem:Variance Bound}
Under Assumptions \ref{assump1} and \ref{assump2}, for all $k\geq 0,s \in [\tilde{S}]$, we have
\begin{equation*}
    \mathbb{E}_{k,s}[\|\barg_{k,s} - g_{k,s}\|^2] \leq \frac{L^2}{b}\|\bx_{k,s} - \bx_{k,0}\|^2.
\end{equation*}
\end{lemma}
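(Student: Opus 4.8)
The plan is to exploit the independence of the mini-batch indices together with the $L$-Lipschitz continuity of each $\nabla f_i$, writing the estimation error $\barg_{k,s}-g_{k,s}$ as an average of $b$ i.i.d.\ mean-zero terms and then bounding the second moment of a single term. Throughout I interpret $\mathbb{E}_{k,s}[\cdot]$ as conditioning on the history up through inner step $s$ of outer loop $k$, so that $\bx_{k,s}$, the reference iterate $\bx_{k,0}$, and the full gradient $g_{k,0}$ are all deterministic and the only randomness is the fresh mini-batch $I_{k,s}$; this is legitimate since within outer loop $k$ the reference iterate is fixed and $\bx_{k,s}$ is reached from $\bx_{k,0}$ by the preceding inner steps.

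First I would set $\zeta_i \coloneqq \nabla f_i(\bx_{k,s}) - \nabla f_i(\bx_{k,0})$ for $i\in[N]$. From \eqref{var_reduced_grad}, $\barg_{k,s} = \tfrac{1}{b}\sum_{i\in I_{k,s}}\zeta_i + g_{k,0}$. Since each index in $I_{k,s}$ is drawn uniformly and independently from $[N]$, every draw satisfies $\mathbb{E}_{k,s}[\zeta_i] = \tfrac{1}{N}\sum_{j=1}^N\zeta_j = g_{k,s}-g_{k,0}$, whence $\mathbb{E}_{k,s}[\barg_{k,s}] = g_{k,s}$ and
\[
\barg_{k,s} - g_{k,s} \;=\; \frac{1}{b}\sum_{i\in I_{k,s}}\bigl(\zeta_i - (g_{k,s}-g_{k,0})\bigr),
\]
a sum of $b$ independent, mean-zero random vectors. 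Expanding $\|\cdot\|^2$, the cross terms vanish by independence and mean-zero, so $\mathbb{E}_{k,s}[\|\barg_{k,s}-g_{k,s}\|^2] = \tfrac{1}{b}\mathbb{E}_{k,s}[\|\zeta_i - (g_{k,s}-g_{k,0})\|^2] \le \tfrac{1}{b}\mathbb{E}_{k,s}[\|\zeta_i\|^2]$, using the elementary fact $\mathbb{E}\|X-\mathbb{E}X\|^2 \le \mathbb{E}\|X\|^2$. Averaging over the uniform draw gives $\tfrac{1}{bN}\sum_{j=1}^N\|\nabla f_j(\bx_{k,s})-\nabla f_j(\bx_{k,0})\|^2$.

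The final step applies Assumption~\ref{assump1}: each $\nabla f_j$ is $L$-Lipschitz, so $\|\nabla f_j(\bx_{k,s})-\nabla f_j(\bx_{k,0})\| \le L\|\bx_{k,s}-\bx_{k,0}\|$ for every $j$; hence every summand is at most $L^2\|\bx_{k,s}-\bx_{k,0}\|^2$ and the average over $j$ yields $\mathbb{E}_{k,s}[\|\barg_{k,s}-g_{k,s}\|^2] \le \tfrac{L^2}{b}\|\bx_{k,s}-\bx_{k,0}\|^2$, as claimed.

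The computation itself is routine; the only point demanding care is the mini-batch sampling model. The clean $1/b$ variance reduction used above relies on $I_{k,s}$ being $b$ independent uniform draws (sampling with replacement), which is exactly what makes the summands independent. If instead the mini-batch is sampled without replacement, the same argument produces an extra factor $\tfrac{N-b}{N-1}\le 1$ in front, so the stated bound continues to hold; I would either fix the with-replacement convention or remark on this so the lemma covers both cases. I should also state explicitly that $\mathbb{E}_{k,s}$ conditions on $\bx_{k,0}$ (not merely on $\bx_{k,s}$), so that $\zeta_i$ and $g_{k,s}-g_{k,0}$ are deterministic under the conditional expectation.
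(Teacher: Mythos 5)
Your proposal is correct and follows essentially the same route as the paper's proof: both write $\barg_{k,s}-g_{k,s}$ as a centered average of the $b$ independent terms $\nabla f_i(\bx_{k,s})-\nabla f_i(\bx_{k,0})$, use independence to kill the cross terms, drop the centering via $\mathbb{E}\|X-\mathbb{E}X\|^2\le\mathbb{E}\|X\|^2$, and finish with the $L$-Lipschitz bound on each component gradient. Your closing remark on with- versus without-replacement sampling is a point the paper leaves implicit (its Lemma~\ref{lemma:A2} silently assumes independent draws), but it does not change the argument.
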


With the above two key lemmas, we now describe a sketch of the proof of the main theorem.

\noindent\textit{Sketch of proof of Theorem \ref{thm:Global Convergence_main}.} The proof combines the conclusions in Lemma \ref{lem:exp_decrease} and Lemma \ref{lem:Variance Bound} and the Lyapunov function 
\begin{equation*}
   \Phi_{k,s} = f(\bx_{k,s}) + \lambda_s\|\bx_{k,s} - \bx_{k,0}\|^2, 
\end{equation*}
where $\lambda_s$ has a recursive definition as
\begin{align*}
    \lambda_s = \alpha^2\frac{L^2(L+ 2\kappa_H)}{2b} + \lambda_{s+1}\left(1+\alpha z+\left(\alpha^2+\frac{\alpha}{z}\right)\frac{L^2}{b}\right)
\end{align*}
with $\lambda_S=0$. With the help of the Lyapunov function, we can show that  
\begin{equation*}
     \mathbb{E}\left[\frac{1}{(K+1)S}\sum_{k=0}^K\sum_{s=0}^{S-1}\|g_{k,s}\|^2\right] \leq \frac{\mathbb{E}[f(\bx_{0,0})] - f_{\inf}}{(K+1)S \cdot \Lambda_{\min}}.
\end{equation*}
Then, the parameter setting in Theorem \ref{thm:Global Convergence_main} enables $ \Lambda_{\min} \geq \frac{\mu_0\mu_1 v_0}{2(L+\kappa_H)}$, which completes the proof.
\section{Experiments}

We present empirical results to evaluate the performance of TRSVR. The primary goal of our experiments is to demonstrate that TRSVR converges more rapidly and stably to high-precision solutions across a range of convex and nonconvex problems. Specifically, we conduct experiments along the following four dimensions:

\textbf{(a)} Comparison with variance-reduced first-order methods, including SVRG \citep{Johnson2013Accelerating}, SAGA \citep{Defazio2014Saga}, and SARAH \citep{Nguyen2017Sarah}, to assess the benefits of incorporating second-order information.

\textbf{(b)} Comparison with first-order methods without variance reduction, such as SGD \cite{Robbins1951Stochastic} and Adam \cite{Kingma2014Adam}, to evaluate the combined benefits of variance reduction and curvature information.

\textbf{(c)} Comparison with deterministic \citep[][Algorithm~4.1]{Nocedal2006Numerical} and stochastic trust-region methods  \citep{,Curtis2022Fully} to assess the effect of variance reduction within trust-region frameworks.

\textbf{(d)} Sensitivity analysis with respect to the mini-batch size $b$ and inner-loop length $S$ to examine their trade-off under a fixed computational budget.

For all experiments, the trust-region subproblem is solved using the Steihaug conjugate gradient method \citep[][Algorithm~7.2]{Nocedal2006Numerical}. Implementation details and full results are provided in Appendix~\ref{append_C}.

\subsection{Hessian approximations}
The algorithm design and analysis allow for general constructions of the Hessian approximation $H_{k,s}$. To investigate their empirical impact, we consider two representative choices with matrix-free implementations.

\textbf{(a) Identity (Id).} We set $H_{k,s} = I$, a choice commonly adopted in stochastic  line-search methods \citep{Berahas2021Sequential,Berahas2023Stochastic,Na2022adaptive,Na2023Inequality}.

\textbf{(b) Estimated Hessian (EstH).} We construct $H_{k,s}$ as an estimate of $\nabla^2 f(\bx_{k,s})$. In practice, Hessian--vector products are computed using finite-difference approximations based on stochastic gradients, allowing the use of curvature information without incurring $\mathcal{O}(d^2)$ memory costs.

\begin{figure}[t]
    \centering
    \begin{minipage}{0.45\textwidth}
        \centering
        \includegraphics[width=\linewidth]{ 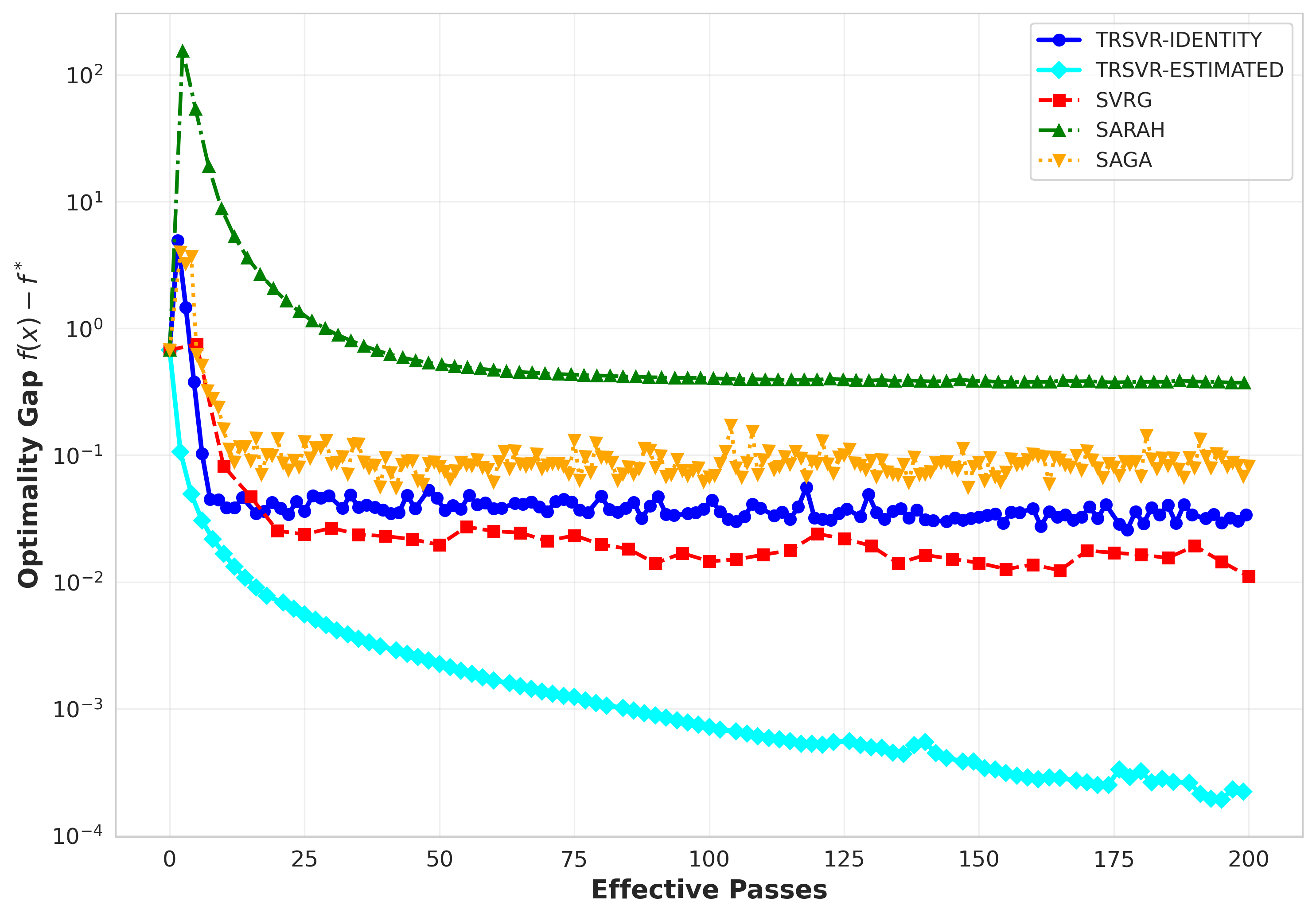}
        \caption{Optimality Gap with effective passes. Each trajectory represents a method.}
        \label{fig:obj_gap}
    \end{minipage}\hfill
    \begin{minipage}{0.45\textwidth}
        \centering
        \includegraphics[width=\linewidth]{ 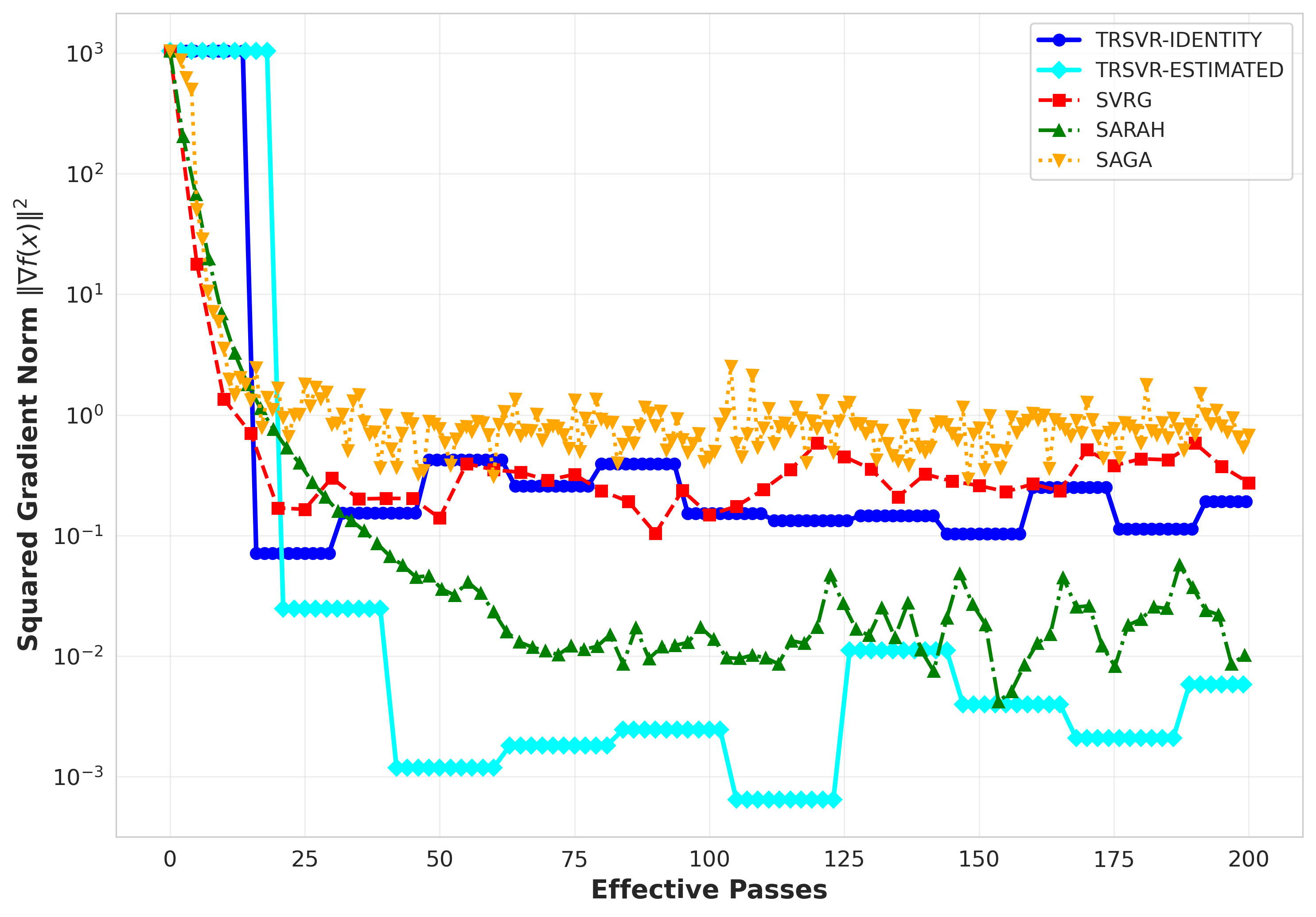}
        \caption{Squared Gradient Norm with effective passes. Each trajectory represents a method.}
        \label{fig:grad_norm}
    \end{minipage}
\end{figure}
\subsection{Comparison with SAGA, SVRG and SARAH}
We first compare TRSVR with variance-reduced first-order methods—SAGA, SVRG, and SARAH—where variance reduction is combined with SGD.

\subsubsection{Convex optimization: ill-conditioned logistic regression}
\label{sec:convex}

We consider an $\ell_2$-regularized logistic regression problem designed to be strongly convex but ill-conditioned:
\begin{equation*}
    f(\boldsymbol{w}) = \frac{1}{N}\sum_{i=1}^{N} \log(1 + \exp(-y_i \bx_i^T \boldsymbol{w})) + \frac{\lambda}{2}\|\boldsymbol{w}\|^2,
\end{equation*}
where $\lambda = 10^{-4}$. Synthetic data are generated with $N = 8\times 10^4$ samples and dimension $d = 32$, following a zero-mean Gaussian distribution. To induce ill-conditioning, the covariance matrix is constructed with condition number $\kappa \approx 10^4$, reflecting challenging optimization landscapes commonly encountered in practice.

Convergence is measured by effective passes (gradient evaluations normalized by 
$N$). Figures~\ref{fig:obj_gap} and~\ref{fig:grad_norm} show TRSVR-EstH consistently outperforms all baselines, while TRSVR-Id matches variance-reduced SGD methods. Since first-order methods converge slowly under large condition numbers, TRSVR-EstH's superior performance demonstrates that curvature information mitigates ill-conditioning, while TRSVR-Id's comparable performance confirms gains arise from second-order geometry rather than the trust-region framework alone.

\subsubsection{Nonconvex optimization}
\label{sec:nonconvex}

We next evaluate TRSVR on nonconvex problems using the \texttt{RCV1} and \texttt{Mushroom} datasets. To induce nonconvexity, we augment logistic regression with a double-well regularizer:
\begin{align}\label{obj:nonconvex}
    f(\boldsymbol{w}) & = \underbrace{\frac{1}{N}\sum_{i=1}^N \log(1+\exp(-y_i \bx_i^T \boldsymbol{w})) + \frac{\lambda}{2}\|\boldsymbol{w}\|^2}_{\text{Convex Part}} \notag \\
    & \qquad\qquad\qquad \qquad+ \underbrace{  \frac{\gamma}{d} \sum_{j=1}^d (w_j^2 - a^2)^2}_{\text{Nonconvex Regularization}},
\end{align}
with $\lambda = 10^{-4}$, $\gamma = 10^{-4}$, and $a = 0.5$. Sparse implementations of SAGA, SVRG, and SARAH are used where applicable.
We report the squared gradient norm with epochs in Figures \ref{fig:rcv1_epochs} and \ref{fig:mushroom_epochs}.

\begin{figure}[t]
    \centering
    \begin{minipage}{0.48\textwidth}
        \centering
\includegraphics[width=\linewidth]{ 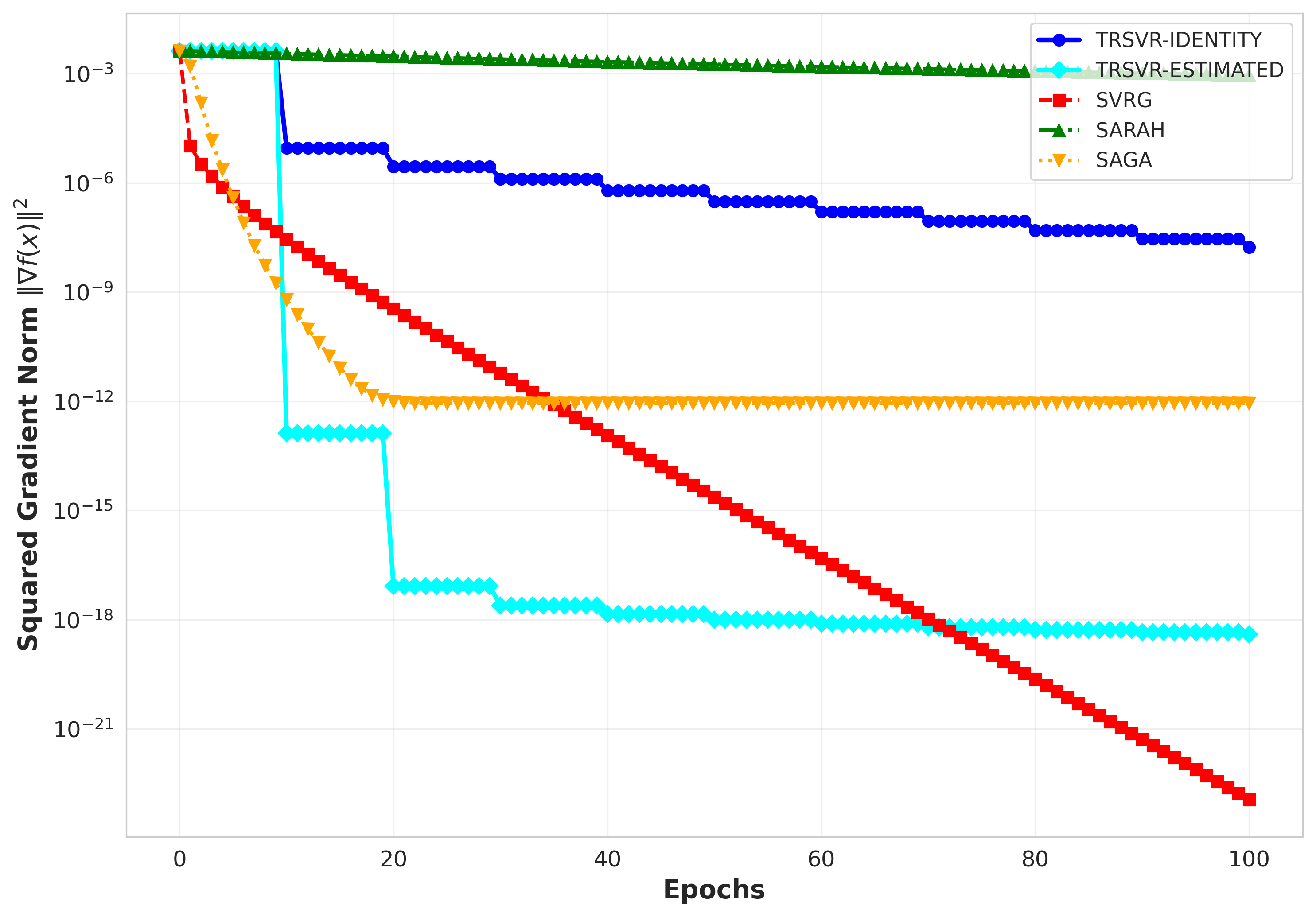}
\caption{Squared gradient norm with epochs on the \texttt{RCV1} dataset. Each trajectory represents a method.}
\label{fig:rcv1_epochs}
\end{minipage}\hfill
\begin{minipage}{0.48\textwidth}
        \centering
\includegraphics[width=\linewidth]{ 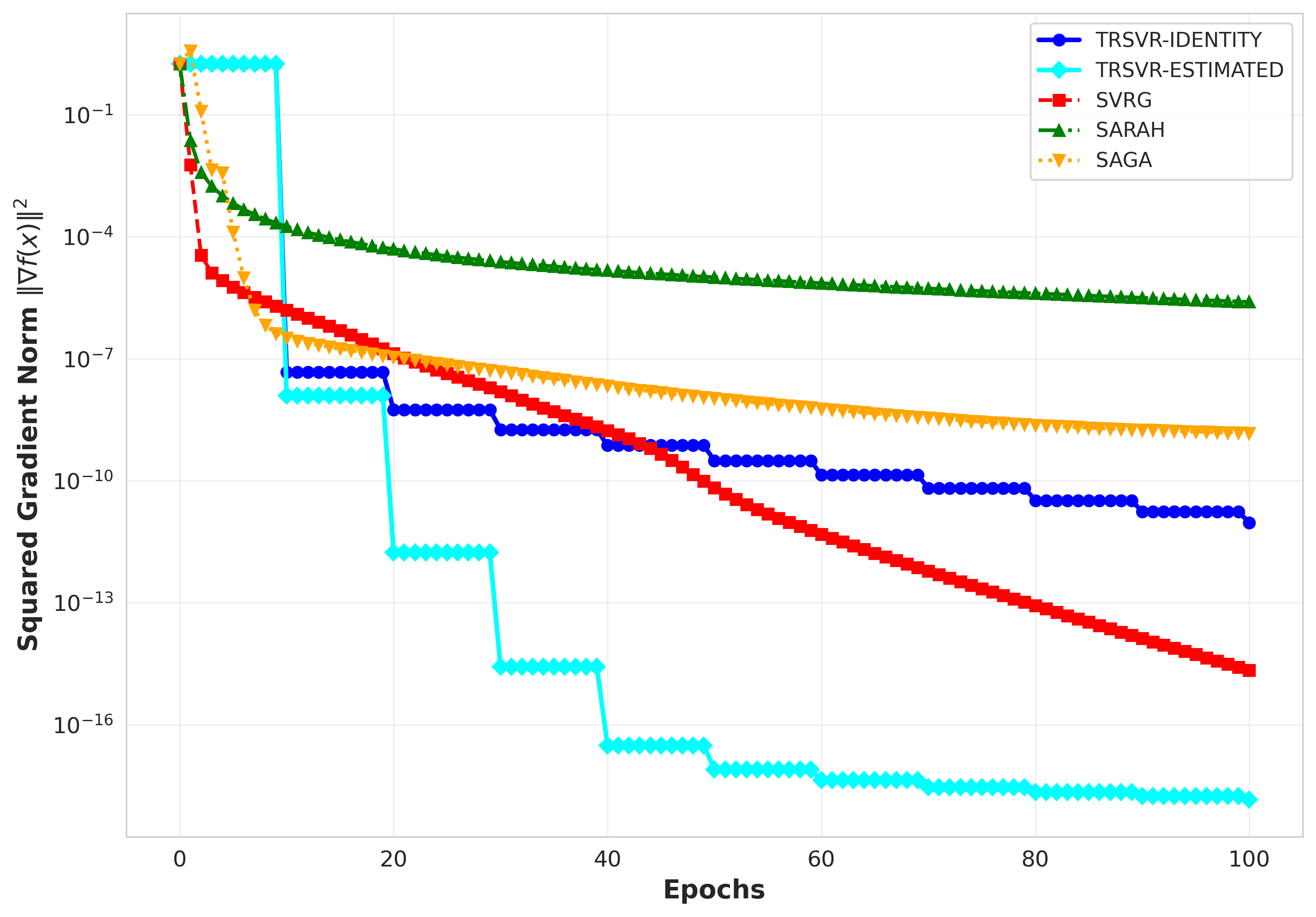}
\caption{Squared gradient norm with epochs on the \texttt{Mushroom} dataset. Each trajectory represents a method.}
\label{fig:mushroom_epochs}
\end{minipage}
\end{figure}
On both datasets, TRSVR-EstH converges significantly faster than TRSVR-Id and all variance-reduced first-order baselines, confirming that curvature information improves navigation of nonconvex landscapes. While TRSVR-EstH may converge to slightly lower final precision than SVRG in some cases, it reaches practically sufficient accuracy significantly faster, which is often the dominant consideration in large-scale learning.

\subsection{Comparison with SGD and Adam}
\label{sec:benchmark}

We further compare TRSVR-EstH with SGD and Adam, two widely used optimizers in large-scale machine learning. SGD uses momentum $0.9$, while Adam uses $\beta_1=0.9$ and $\beta_2=0.999$. Experiments are conducted on the \texttt{Covertype} and \texttt{IJCNN1} datasets using the nonconvex objective \eqref{obj:nonconvex}.

Figures~\ref{fig:Covertype_SGD_Adam} and~\ref{fig:Ijcnn1_SGD_Adam} show the optimality gap with wall-clock time. TRSVR consistently reaches high-precision solutions substantially faster, whereas SGD and Adam plateau at suboptimal accuracy levels around $10^{-2}$ to $10^{-5}$. We note that SGD and Adam benefit from highly optimized implementations and momentum acceleration, while our TRSVR implementation is not yet fully optimized, suggesting further potential performance gains.

\begin{figure}[t]
\centering
\includegraphics[width=0.9\linewidth]{ 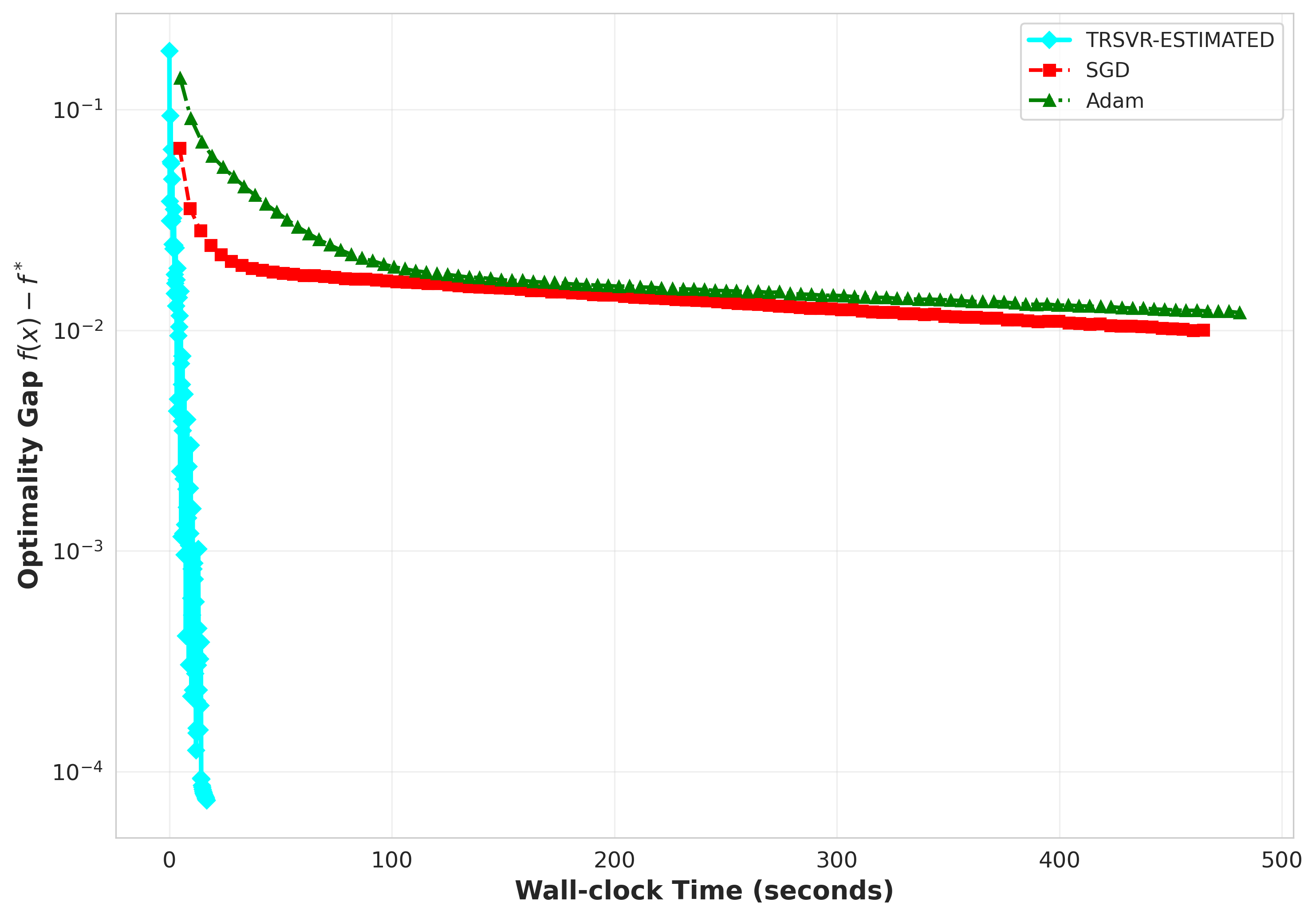}
\caption{Optimality gap with wall-clock time on the \texttt{Covertype} dataset. Each trajectory represents a method.}
\label{fig:Covertype_SGD_Adam}
\end{figure}
\begin{figure}[t]
\centering
\includegraphics[width=0.9\linewidth]{ 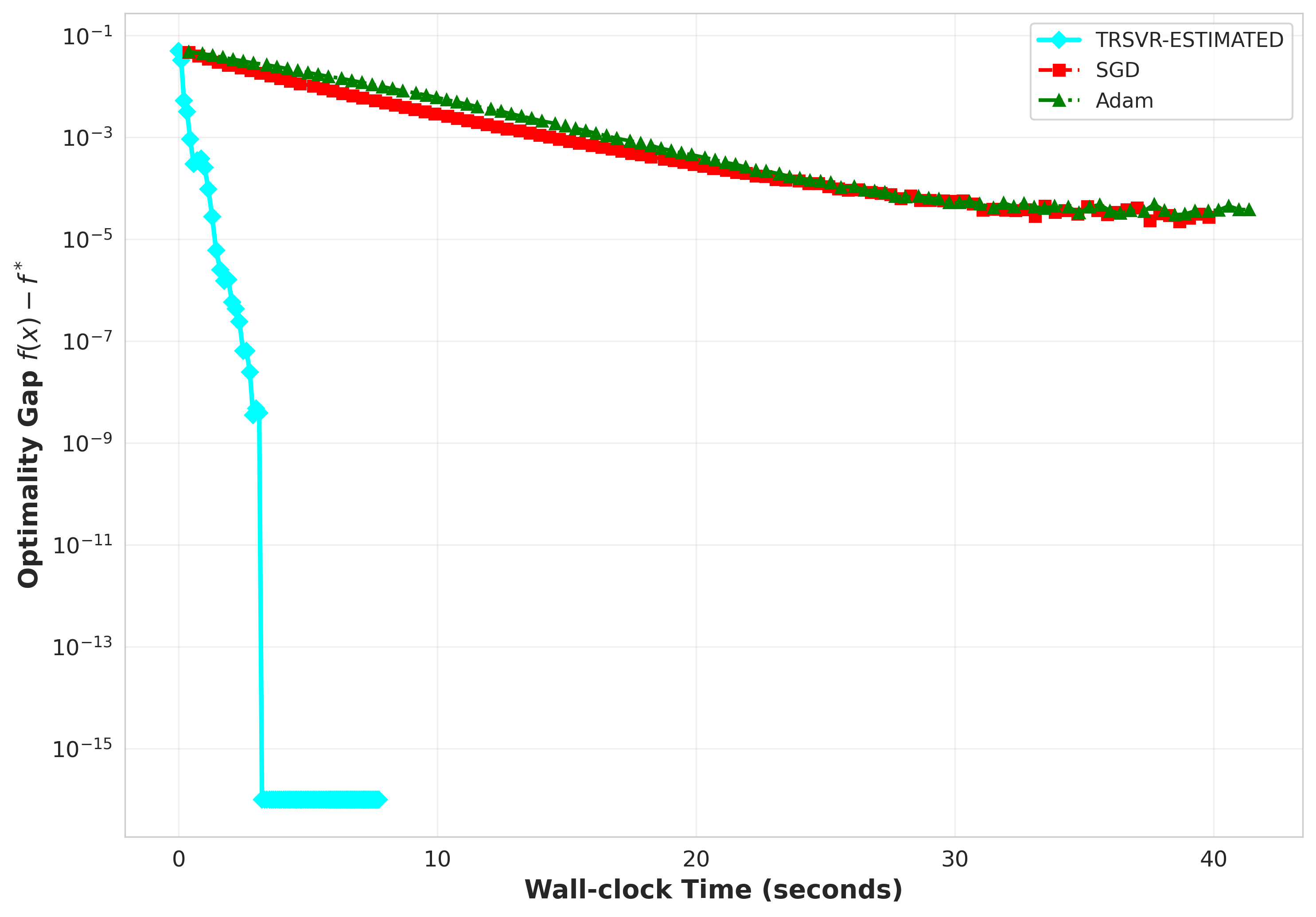}
\caption{Optimality gap with wall-clock time on the \texttt{IJCNN1} dataset. Each trajectory represents a method.}
\label{fig:Ijcnn1_SGD_Adam}
\end{figure}

\subsection{Sensitivity test: inner loop length vs. batch size}
\label{sec:sensitivity}

In this experiment, we investigate the trade-off between the inner loop length $S$ and the mini-batch size $b$ in the TRSVR algorithm while maintaining a constant total computational budget per epoch. We employ the objective function \eqref{obj:nonconvex} and the estimated Hessian to construct $H_{k,s}$.

We implement TRSVR-EstH on \texttt{RCV1} and \texttt{Covertype} datasets to explore high-dimensional sparse and dense problems, respectively. Throughout the experiment, we set a constant per-epoch budget $40000 = b \times S$ and vary the choice of $b$ and $S$. Specifically, we consider three regimes:\\
\textbf{Large Batch:} We choose small $S\in \{80, 50, 40, 20, 10\}$ and large $b\in\{500, 800, 1000, 2000, 4000\}$. This regime represents trust-region methods with infrequent updates.\\
\textbf{Balanced:} We choose moderate $b\in\{100, 200, 400\}$ and $S\in\{400,200, 100\}$. This represents the standard configuration used in our other experiments.\\
\textbf{High Frequency:} We choose small $b\in\{10, 20, 50\}$ and large $S\in\{4000, 2000, 1000, 800\}$. This regime represents highly stochastic trust-region methods.

\begin{figure}[t]
\centering
\includegraphics[width=0.9\linewidth]{ 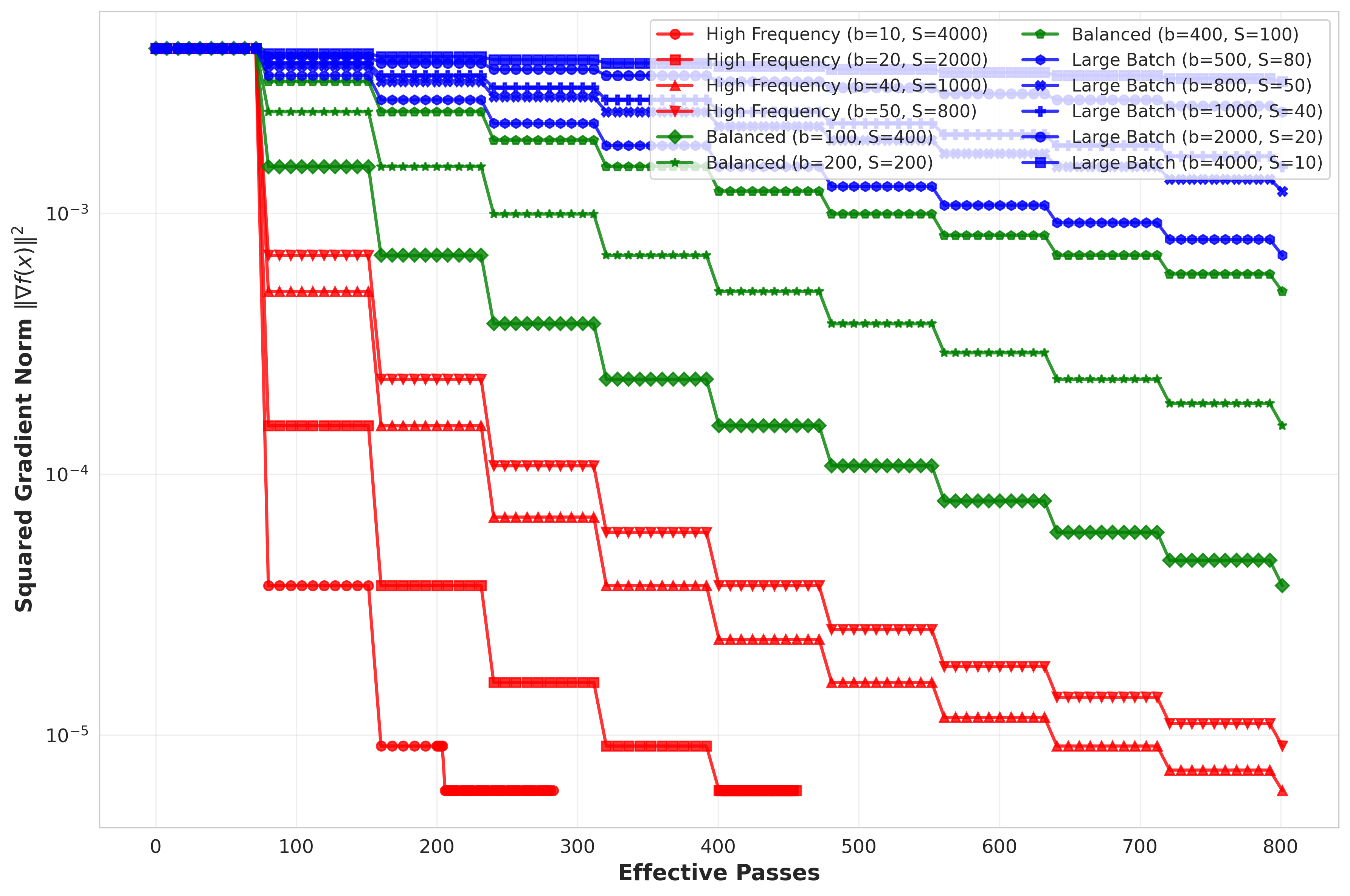}
\caption{Squared gradient norm with effective passes on the \texttt{RCV1} dataset. Each trajectory represents a combination of $b$ and $S$.}
\label{fig:RCV1_sensitivity}
\end{figure}

\begin{figure}[t]
\centering
\includegraphics[width=0.9\linewidth]{ 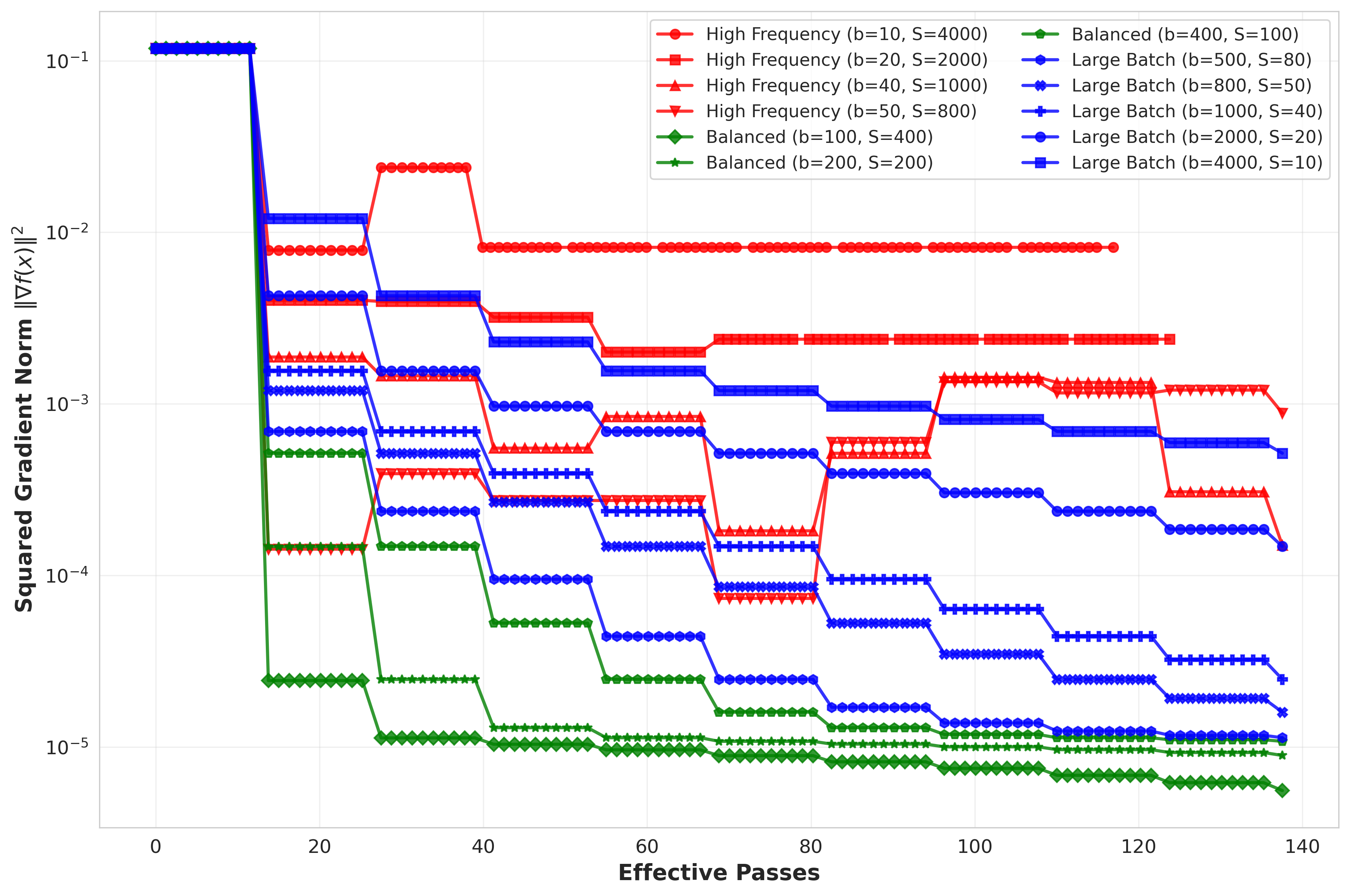}
\caption{Squared gradient norm with effective passes on the \texttt{Covertype} dataset. Each trajectory represents a combination of $b$ and $S$.}
\label{fig:Covertype_sensitivity}
\end{figure}

We observe markedly different performance patterns between sparse and dense datasets. On the sparse \texttt{RCV1} dataset (cf. Figure~\ref{fig:RCV1_sensitivity}), the high-frequency regime achieves the best performance, whereas configurations with large batch sizes cause TRSVR to stall at relatively high error levels and exhibit slow convergence. This behavior can be attributed to the fact that frequent updates with smaller batches enable the algorithm to exploit local curvature and gradient information more effectively, which is particularly important in sparse settings. In contrast, for the dense \texttt{Covertype} dataset, Figure~\ref{fig:Covertype_sensitivity} shows that the balanced regime yields the best performance, while the high-frequency regime performs noticeably worse. This degradation is likely due to the increased variance in gradient estimates induced by overly small batch sizes in dense problems. Overall, these sensitivity results indicate that the optimal configuration of TRSVR is strongly dependent on dataset structure, and that appropriate tuning of algorithmic parameters can substantially enhance performance.

\begin{figure*}[t]
    \centering
    \includegraphics[width=0.9\linewidth]{ 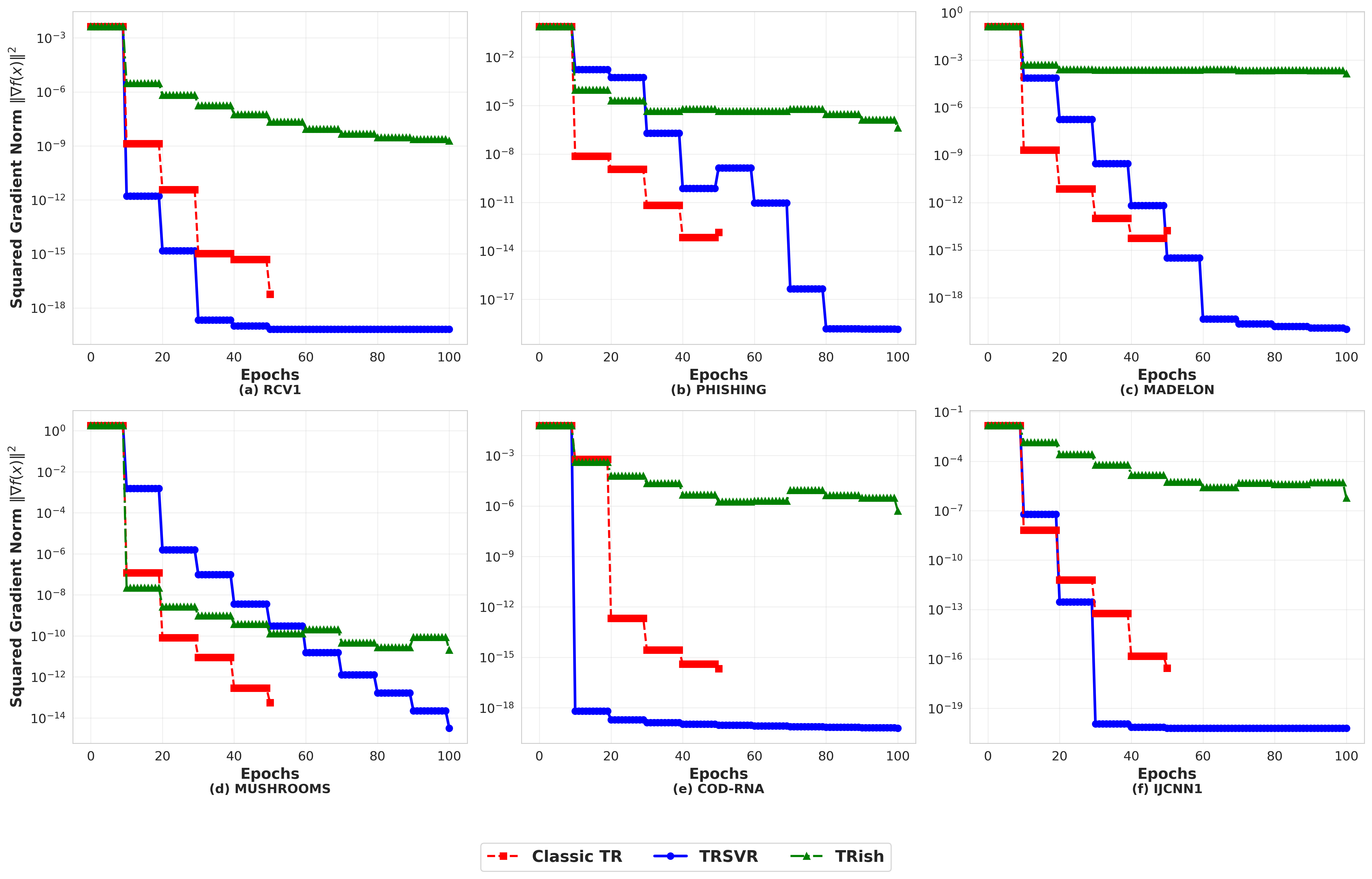}
    \caption{Squared gradient norm with epochs over six datasets. Each trajectory represents a method.}
    \label{fig:all_datasets_ablation}
\end{figure*}

\subsection{Ablation study}
\label{sec:ablation}

To further assess the effectiveness of variance reduction, we conduct an ablation study comparing TRSVR with two established trust-region algorithms: the classic trust-region method \citep[][Algorithm~4.1]{Nocedal2006Numerical} and the stochastic trust-region method (TRish) \citep{Curtis2022Fully}. For all methods, we employ the estimated Hessian to construct $H_{k,s}$.

We selected six benchmark datasets \texttt{RCV1}, \texttt{Phishing}, \texttt{Madelon}, \texttt{Mushroom}, \texttt{Cod-RNA} and \texttt{IJCNN1}. All experiments use the objective function \eqref{obj:nonconvex}. Figure~\ref{fig:all_datasets_ablation} reports the trajectories of squared gradient norm with epochs.

Across all datasets, TRSVR consistently converges to the highest-precision solutions, whereas TRish stagnates at relatively low accuracy. These results demonstrate that incorporating variance reduction substantially enhances the ability of stochastic trust-region methods to attain high-precision solutions. Moreover, compared with the classic trust-region method, TRSVR achieves comparable or higher accuracy and, in several cases, exhibits faster convergence. Notably, while the classic trust-region method requires computing the full gradient at every iteration, TRSVR computes full gradients only periodically. This highlights the significant computational savings offered by TRSVR without sacrificing optimization performance.

\section{Discussion and Future Work}

Our analysis demonstrates that SVRG-level sample complexity can be preserved even when the Hessian approximation is stochastic and potentially correlated with the gradient estimator. This finding broadens the scope of variance reduction theory beyond first-order methods. Empirically, TRSVR proves particularly effective in ill-conditioned and nonconvex settings, where first-order variance-reduced methods tend to converge slowly or plateau at suboptimal solutions. By incorporating curvature information, TRSVR exhibits superior stability and achieves faster convergence to high-precision solutions.

Despite the theoretical grounding and empirical performance of TRSVR, limitations remain.
A key limitation of TRSVR is the additional computational cost associated with solving trust-region subproblems, though this overhead can be mitigated through matrix-free Hessian-vector products and inexact solvers. Furthermore, the current framework does not incorporate acceleration techniques such as momentum or adaptive preconditioning, which could potentially enhance practical performance.

Several directions remain for future work. These include integrating momentum or adaptive curvature scaling into the TRSVR framework, extending the analysis to more general stochastic oracle models (e.g., biased or heavy-tailed noise), and developing adaptive strategies for selecting batch sizes and inner-loop lengths. Applying TRSVR to large-scale deep learning and reinforcement learning problems also presents an interesting avenue for future research.

\section{Conclusion}
We proposed TRSVR, a stochastic trust-region method that integrates variance reduction techniques. Under standard assumptions, we established global convergence guarantees and demonstrated that the iteration and sample complexity of TRSVR for finding first-order stationary points in nonconvex finite-sum problems match those of SVRG. Importantly, our analysis shows that such complexity guarantees can be preserved even when incorporating stochastic second-order geometry, thereby extending the scope of existing variance reduction theory. Extensive numerical experiments on both convex and nonconvex problems demonstrate that TRSVR converges faster and more stably to high-precision solutions than first-order baselines and existing stochastic trust-region methods. These results indicate that TRSVR provides a practical and theoretically grounded approach for large-scale stochastic optimization.

%\bibliography{example_paper}
\bibliographystyle{icml2026}
\bibliography{ref}
%%%%%%%%%%%%%%%%%%%%%%%%%%%%%%%%%%%%%%%%%%%%%%%%%%%%%%%%%%%%%%%%%%%%%%%%%%%%%%%
%%%%%%%%%%%%%%%%%%%%%%%%%%%%%%%%%%%%%%%%%%%%%%%%%%%%%%%%%%%%%%%%%%%%%%%%%%%%%%%
% APPENDIX
%%%%%%%%%%%%%%%%%%%%%%%%%%%%%%%%%%%%%%%%%%%%%%%%%%%%%%%%%%%%%%%%%%%%%%%%%%%%%%%
%%%%%%%%%%%%%%%%%%%%%%%%%%%%%%%%%%%%%%%%%%%%%%%%%%%%%%%%%%%%%%%%%%%%%%%%%%%%%%%
\newpage
\appendix
\onecolumn
\section{Useful lemmas}\label{append_A}
\begin{lemma}\label{lemma:A1}
For any random variable $z$, we have
    \begin{equation*}
        \mathbb{E}[\|z-\mathbb{E}[z]\|^2] \leq \mathbb{E}[\|z\|^2].
    \end{equation*}
\end{lemma}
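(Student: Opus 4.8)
The plan is to prove this via the standard bias--variance decomposition of the second moment. Write $\bar z := \mathbb{E}[z]$, which is a fixed (deterministic) vector in $\mathbb{R}^d$. Since $\|\cdot\|$ is the Euclidean norm, I would expand
\begin{equation*}
\|z - \bar z\|^2 = \|z\|^2 - 2\langle z, \bar z\rangle + \|\bar z\|^2 ,
\end{equation*}
and then take expectations on both sides, using linearity of $\mathbb{E}[\cdot]$.

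The key step is the cross term: because $\bar z$ is deterministic, $\mathbb{E}[\langle z, \bar z\rangle] = \langle \mathbb{E}[z], \bar z\rangle = \langle \bar z, \bar z\rangle = \|\bar z\|^2$, and likewise $\mathbb{E}[\|\bar z\|^2] = \|\bar z\|^2$. Substituting gives
\begin{equation*}
\mathbb{E}[\|z - \bar z\|^2] = \mathbb{E}[\|z\|^2] - 2\|\bar z\|^2 + \|\bar z\|^2 = \mathbb{E}[\|z\|^2] - \|\mathbb{E}[z]\|^2 .
\end{equation*}
Since $\|\mathbb{E}[z]\|^2 \ge 0$, dropping this nonnegative term yields the claimed inequality $\mathbb{E}[\|z - \mathbb{E}[z]\|^2] \le \mathbb{E}[\|z\|^2]$.

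There is essentially no obstacle here — this is just the elementary fact that the variance never exceeds the second moment, applied coordinatewise (or equivalently via the inner-product structure of $\mathbb{R}^d$). The only points worth stating explicitly for rigor are that $\mathbb{E}[z]$ is constant (so it passes through the expectation in the cross term) and that the expansion of $\|z-\bar z\|^2$ relies on $\|\cdot\|$ being the $\ell_2$ norm, which matches the notation convention fixed in the paper. One should also implicitly assume $\mathbb{E}[\|z\|^2] < \infty$ so that all the expectations above are well defined; otherwise the inequality holds trivially with the right-hand side equal to $+\infty$.
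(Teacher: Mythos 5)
Your proof is correct and follows exactly the same route as the paper's: expand $\|z-\mathbb{E}[z]\|^2$, use linearity of expectation on the cross term to obtain $\mathbb{E}[\|z\|^2]-\|\mathbb{E}[z]\|^2$, and drop the nonnegative term $\|\mathbb{E}[z]\|^2$. The added remarks about finiteness of $\mathbb{E}[\|z\|^2]$ and the $\ell_2$ structure are fine but not needed beyond what the paper does.
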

\begin{proof}
We have
    \begin{align*}
        \mathbb{E}[\|z-\mathbb{E}[z]\|^2] & = \mathbb{E}[\|z\|^2-2z^T\mathbb{E}[z] + \|\mathbb{E}[z] \|^2] \\
        & = \mathbb{E}[\|z\|^2] - 2 \|\mathbb{E}[z]\|^2 + \|\mathbb{E}[z] \|^2 =  \mathbb{E}[\|z\|^2] -  \|\mathbb{E}[z]\|^2 \leq \mathbb{E}[\|z\|^2].
    \end{align*}
    We thus finish the proof.
\end{proof}

\begin{lemma}\label{lemma:A2}
    For independent mean-zero random variables $z_1, z_2,\cdots, z_n$, we have
\begin{equation*}
   \mathbb{E}[\|\sum_{i=1}^n z_i \|^2] = \sum_{i=1}^n\mathbb{E}[\|z_i\|^2] .
\end{equation*}
\end{lemma}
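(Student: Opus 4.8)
The plan is to reduce the identity to the vanishing of the cross terms via a direct expansion of the squared norm, which is essentially the statement that mean-zero independent random vectors are orthogonal in $L^2$. First I would write
\begin{equation*}
\Bigl\|\sum_{i=1}^n z_i\Bigr\|^2
= \Bigl(\sum_{i=1}^n z_i\Bigr)^T \Bigl(\sum_{j=1}^n z_j\Bigr)
= \sum_{i=1}^n \|z_i\|^2 + \sum_{\substack{i,j=1\\ i\neq j}}^n z_i^T z_j,
\end{equation*}
and then take expectations, using linearity over these finite sums (legitimate once each $\mathbb{E}[\|z_i\|^2]$ is assumed finite, which by Cauchy--Schwarz also makes every cross-term expectation finite). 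This leaves the single claim that $\sum_{i\neq j}\mathbb{E}[z_i^T z_j]=0$.

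To handle a single cross term with $i\neq j$, I would expand the inner product coordinatewise as $z_i^T z_j = \sum_{\ell=1}^d z_{i,\ell}\, z_{j,\ell}$. Since $z_i$ and $z_j$ are independent, so are the scalars $z_{i,\ell}$ and $z_{j,\ell}$, hence $\mathbb{E}[z_{i,\ell} z_{j,\ell}] = \mathbb{E}[z_{i,\ell}]\,\mathbb{E}[z_{j,\ell}]$, which is zero because each $z_i$ has mean zero. Summing over $\ell$ gives $\mathbb{E}[z_i^T z_j]=0$, and summing over the pairs $i\neq j$ then yields $\mathbb{E}\bigl[\|\sum_{i=1}^n z_i\|^2\bigr] = \sum_{i=1}^n \mathbb{E}[\|z_i\|^2]$.

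There is no substantive obstacle; the identity is a one-line expansion. The only points worth flagging are bookkeeping: independence enters solely to factor $\mathbb{E}[z_i^T z_j]$ as $\mathbb{E}[z_i]^T\mathbb{E}[z_j]$, the mean-zero hypothesis enters solely to make that product vanish, and neither is needed for the diagonal terms $\mathbb{E}[\|z_i\|^2]$; one also tacitly assumes $\mathbb{E}[\|z_i\|^2]<\infty$ for all $i$ so that every expectation above is well defined. An alternative route is induction on $n$, splitting off $z_n$ and using that it is independent of (and uncorrelated with) $z_1+\cdots+z_{n-1}$, but the direct expansion is the most transparent.
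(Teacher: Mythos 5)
Your proof is correct and follows essentially the same route as the paper: expand the squared norm into diagonal and cross terms, then use independence together with the mean-zero hypothesis to annihilate each $\mathbb{E}[z_i^T z_j]$ for $i\neq j$. The coordinatewise factorization and the remark on finiteness of $\mathbb{E}[\|z_i\|^2]$ are just slightly more explicit versions of what the paper leaves implicit.
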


\begin{proof}
    We have
    \begin{align*}
        \mathbb{E}[\|\sum_{i=1}^n z_i \|^2] = \sum_{i=1}^n \mathbb{E}[\|z_i \|^2] + \sum_{i\neq j} \mathbb{E}[z_i^Tz_j].
    \end{align*}
    Since $z_i,z_j$ are independent with mean zero, we have $\mathbb{E}[z_i^Tz_j]=0$ for all $i\neq j$. We thus complete the proof.
\end{proof}

\section{Proof of TRSVR}\label{append_B}

\begin{lemma}[One-Step Reduction]\label{lem:One-Step Reduction}
Under Assumption \ref{assump1}, for any iteration $(k,s)\in \mathbb{N}\times [\tilde{S}]$, we have
\begin{equation*}
 f(\bx_{k,s+1}) - f(\bx_{k,s}) \leq -\|\barg_{k,s}\|\Delta_{k,s} + \frac{1}{2}\|H_{k,s}\|\Delta_{k,s}^2 + \|\barg_{k,s} - g_{k,s} \|\Delta_{k,s} + \frac{1}{2}(L + \kappa_H)\Delta_{k,s}^2.   
\end{equation*}
\end{lemma}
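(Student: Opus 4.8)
\textit{Proof plan.} The starting point is the $L$-smoothness descent inequality applied to the aggregate objective, which is valid under Assumption \ref{assump1}: since $\bx_{k,s+1} = \bx_{k,s} + \Delta\bx_{k,s}$,
\begin{equation*}
 f(\bx_{k,s+1}) - f(\bx_{k,s}) \leq g_{k,s}^T\Delta\bx_{k,s} + \frac{L}{2}\|\Delta\bx_{k,s}\|^2.
\end{equation*}
The idea is then to pass from the true gradient $g_{k,s}$ to the variance-reduced gradient $\barg_{k,s}$ appearing in the model, picking up a controllable error term, and to use the Cauchy reduction bound to control the $\barg_{k,s}^T\Delta\bx_{k,s}$ contribution.

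Concretely, I would write $g_{k,s}^T\Delta\bx_{k,s} = \barg_{k,s}^T\Delta\bx_{k,s} + (g_{k,s}-\barg_{k,s})^T\Delta\bx_{k,s}$ and bound the second piece by Cauchy--Schwarz and the trust-region constraint $\|\Delta\bx_{k,s}\| \le \Delta_{k,s}$, giving $(g_{k,s}-\barg_{k,s})^T\Delta\bx_{k,s} \le \|\barg_{k,s}-g_{k,s}\|\,\Delta_{k,s}$. For the first piece, I would use the defining identity $\barg_{k,s}^T\Delta\bx_{k,s} = m(\Delta\bx_{k,s}) - \tfrac12\Delta\bx_{k,s}^T H_{k,s}\Delta\bx_{k,s}$ together with $m(\boldsymbol{0})=0$, so that Lemma \ref{lemma:Cauchy reduction} yields $m(\Delta\bx_{k,s}) \le -\|\barg_{k,s}\|\Delta_{k,s} + \tfrac12\|H_{k,s}\|\Delta_{k,s}^2$, while the reintroduced curvature term is bounded via $-\tfrac12\Delta\bx_{k,s}^T H_{k,s}\Delta\bx_{k,s} \le \tfrac12\|H_{k,s}\|\,\|\Delta\bx_{k,s}\|^2 \le \tfrac12\kappa_H\Delta_{k,s}^2$, using $\|\Delta\bx_{k,s}\|\le\Delta_{k,s}$ and Assumption \ref{assump1}. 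Finally $\tfrac{L}{2}\|\Delta\bx_{k,s}\|^2 \le \tfrac{L}{2}\Delta_{k,s}^2$.

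Collecting the four contributions---$-\|\barg_{k,s}\|\Delta_{k,s} + \tfrac12\|H_{k,s}\|\Delta_{k,s}^2$ from the Cauchy bound, $\tfrac12\kappa_H\Delta_{k,s}^2$ from the reintroduced Hessian term, $\|\barg_{k,s}-g_{k,s}\|\Delta_{k,s}$ from the gradient mismatch, and $\tfrac{L}{2}\Delta_{k,s}^2$ from smoothness---and merging the last two quadratic-in-$\Delta_{k,s}$ pieces into $\tfrac12(L+\kappa_H)\Delta_{k,s}^2$ gives exactly the claimed inequality. There is no real obstacle here; the only point requiring a little care is the bookkeeping of the curvature terms, namely that $\tfrac12\|H_{k,s}\|\Delta_{k,s}^2$ is kept in its exact form (inherited from Lemma \ref{lemma:Cauchy reduction}) whereas the separate term coming from undoing the model's quadratic part is the one estimated by $\kappa_H$.
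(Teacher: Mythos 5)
Your argument is correct and follows essentially the same route as the paper's proof: both start from the $L$-smoothness descent inequality, bound the gradient mismatch term $(g_{k,s}-\barg_{k,s})^T\Delta\bx_{k,s}$ by Cauchy--Schwarz and the trust-region constraint, invoke Lemma~\ref{lemma:Cauchy reduction} for the model decrease, and use $\|H_{k,s}\|\le\kappa_H$ to absorb the reintroduced curvature term into the $\tfrac12(L+\kappa_H)\Delta_{k,s}^2$ piece. The only difference is presentational (you split the inner product directly, while the paper adds and subtracts the full model and rearranges), and your bookkeeping of the two curvature contributions matches the paper exactly.
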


\begin{proof}
By the Lipschitz continuity of $g(\bx)$ (cf. Assumption \ref{assump1}), we have
\begin{equation*}
f(\bx_{k,s+1}) \leq f(\bx_{k,s}) + g_{k,s}^T\Delta \bx_{k,s} + \frac{1}{2}L\|\Delta \bx_{k,s}\|^2.
\end{equation*}
Therefore,
\begin{align*}
   f(\bx_{k,s+1}) - f(\bx_{k,s}) - \barg_{k,s}^T\Delta \bx_{k,s} - \frac{1}{2}\Delta \bx_{k,s}^T H_{k,s}\Delta \bx_{k,s} \leq  (g_{k,s} - \barg_{k,s})^T\Delta \bx_{k,s} + \frac{1}{2}L\|\Delta \bx_{k,s}\|^2 - \frac{1}{2}\Delta \bx_{k,s}^T H_{k,s}\Delta \bx_{k,s}.
\end{align*}
Using the fact that $\|H_{k,s}\|\leq \kappa_H$, we have
\begin{align*}
    f(\bx_{k,s+1}) - f(\bx_{k,s}) - \barg_{k,s}^T\Delta \bx_{k,s} - \frac{1}{2}\Delta \bx_{k,s}^T H_{k,s}\Delta \bx_{k,s} & \leq (g_{k,s} - \barg_{k,s})^T\Delta \bx_{k,s} + \frac{1}{2}(L+\kappa_H)\|\Delta \bx_{k,s}\|^2\\
    & \leq \|g_{k,s} - \barg_{k,s}\| \|\Delta \bx_{k,s} \| + \frac{1}{2}(L+\kappa_H)\|\Delta \bx_{k,s}\|^2 \\
    & \leq \|g_{k,s} - \barg_{k,s}\| \|\Delta_{k,s} \| + \frac{1}{2}(L+\kappa_H)\|\Delta_{k,s}\|^2.
\end{align*}
The second inequality is by Cauchy-Schwarz inequality and the last inequality is by the fact that $\|\Delta x_{k,s}\| \leq \Delta_{k,s}$. Rearranging the terms, we have
\begin{align*}
    f(\bx_{k,s+1}) - f(\bx_{k,s}) & \leq  \barg_{k,s}^T\Delta \bx_{k,s} + \frac{1}{2}\Delta \bx_{k,s}^T H_{k,s}\Delta \bx_{k,s} + \|g_{k,s} - \barg_{k,s}\| \|\Delta_{k,s} \| + \frac{1}{2}(L+\kappa_H)\|\Delta_{k,s}\|^2.
\end{align*}
Combining the conclusion in Lemma \ref{lemma:Cauchy reduction} with the above display, we have
\begin{align*}
    f(\bx_{k,s+1}) - f(\bx_{k,s}) & \leq  -\|\barg_{k,s}\| \|\Delta_{k,s}\| + \frac{1}{2}\|H_{k,s} \| \Delta_{k,s}^2 + \|\barg_{k,s} - g_{k,s} \| \Delta_{k,s} + \frac{1}{2}(L+\kappa_H)\Delta_{k,s}^2.
\end{align*}
We thus finish the proof.
\end{proof}

\begin{lemma}[One-Step Expected Reduction]\label{lem: One-Step Expected Reduction}
Under Assumptions \ref{assump1} and \ref{assump2}, suppose
$\alpha \leq \frac{1}{2(L + 2\kappa_H)}$ holds, then for any iteration $(k,s)\in \mathbb{N}\times [\tilde{S}]$,  we have
\begin{equation*}
     \mathbb{E}_{k,s}[f(\bx_{k,s+1})] - f(\bx_{k,s}) \leq -\frac{1}{4}\alpha\|g_{k,s}\|^2 + \frac{1}{2}(L + 2\kappa_H)\alpha^2\mathbb{E}_{k,s}[\| \barg_{k,s} - g_{k,s}\|^2].
\end{equation*}
\end{lemma}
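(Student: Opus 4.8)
\textbf{Proof plan for Lemma~\ref{lem: One-Step Expected Reduction}.}

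The starting point is the deterministic one-step bound from Lemma~\ref{lem:One-Step Reduction}, which I would rewrite using $\Delta_{k,s} = \alpha\|\barg_{k,s}\|$ to obtain, after cancellation of the $-\|\barg_{k,s}\|\Delta_{k,s}$ term against part of the curvature terms,
\begin{equation*}
f(\bx_{k,s+1}) - f(\bx_{k,s}) \le -\alpha\|\barg_{k,s}\|^2 + \tfrac{1}{2}(L+2\kappa_H)\alpha^2\|\barg_{k,s}\|^2 + \alpha\|\barg_{k,s}-g_{k,s}\|\,\|\barg_{k,s}\|.
\end{equation*}
Under the standing assumption $\alpha \le \frac{1}{2(L+2\kappa_H)}$, the coefficient $-\alpha + \tfrac12(L+2\kappa_H)\alpha^2$ is at most $-\tfrac34\alpha$, so the first two terms combine to at most $-\tfrac34\alpha\|\barg_{k,s}\|^2$. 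This turns the bound into $f(\bx_{k,s+1})-f(\bx_{k,s}) \le -\tfrac34\alpha\|\barg_{k,s}\|^2 + \alpha\|\barg_{k,s}-g_{k,s}\|\,\|\barg_{k,s}\|$.

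Next I would take the conditional expectation $\mathbb{E}_{k,s}[\cdot]$. The term $\mathbb{E}_{k,s}[\|\barg_{k,s}\|^2]$ I expand as $\|g_{k,s}\|^2 + \mathbb{E}_{k,s}[\|\barg_{k,s}-g_{k,s}\|^2]$, using that $\barg_{k,s}$ is an unbiased estimator of $g_{k,s}$ conditional on $\bx_{k,s}$ (which follows from Assumption~\ref{assump2} and the form of the SVRG estimator in \eqref{var_reduced_grad}). The cross term $\mathbb{E}_{k,s}[\|\barg_{k,s}-g_{k,s}\|\,\|\barg_{k,s}\|]$ is the delicate one: I would control it via Cauchy–Schwarz/Young's inequality, writing $\|\barg_{k,s}-g_{k,s}\|\,\|\barg_{k,s}\| \le \tfrac{\eta}{2}\|\barg_{k,s}\|^2 + \tfrac{1}{2\eta}\|\barg_{k,s}-g_{k,s}\|^2$ for a suitable constant $\eta$, then take expectations and again expand $\mathbb{E}_{k,s}[\|\barg_{k,s}\|^2]$ in terms of $\|g_{k,s}\|^2$ and the variance.

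Collecting terms, the coefficient on $\|g_{k,s}\|^2$ should be arranged to equal $-\tfrac14\alpha$ by choosing $\eta$ appropriately (balancing the $-\tfrac34\alpha$ against the $+\tfrac{\eta}{2}\alpha$ contribution, so $\eta = 1$ works, giving $-\tfrac34\alpha + \tfrac12\alpha = -\tfrac14\alpha$); the leftover $\|\barg_{k,s}-g_{k,s}\|^2$ terms accumulate into the variance term, whose coefficient I would check is at most $\tfrac12(L+2\kappa_H)\alpha^2$. The main obstacle is precisely this bookkeeping: ensuring that with the chosen $\eta$ the residual variance coefficient collapses to exactly $\tfrac12(L+2\kappa_H)\alpha^2$ rather than something larger — this requires using the upper bound $\alpha \le \frac{1}{2(L+2\kappa_H)}$ a second time to absorb an $\alpha$ into a constant. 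I expect the inequality to be tight enough that the bound $\alpha \le \frac{1}{2(L+2\kappa_H)}$ is used in two places, once to get the $-\tfrac34\alpha$ and once to bound the variance coefficient, so I would keep track of both uses carefully.
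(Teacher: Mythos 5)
Your outline follows the paper's route (Lemma~\ref{lem:One-Step Reduction}, substitute $\Delta_{k,s}=\alpha\|\barg_{k,s}\|$, Young's inequality on the cross term, expand $\mathbb{E}_{k,s}[\|\barg_{k,s}\|^2]=\|g_{k,s}\|^2+\mathbb{E}_{k,s}[\|\barg_{k,s}-g_{k,s}\|^2]$), but there is a genuine gap in the bookkeeping at exactly the point you flag as "the main obstacle," and your proposed fix does not work. By converting $\tfrac12(L+2\kappa_H)\alpha^2\|\barg_{k,s}\|^2$ into $\tfrac14\alpha\|\barg_{k,s}\|^2$ \emph{before} taking expectations, you arrive at
\begin{equation*}
\mathbb{E}_{k,s}[f(\bx_{k,s+1})]-f(\bx_{k,s})\le \Bigl(-\tfrac34+\tfrac{\eta}{2}\Bigr)\alpha\,\mathbb{E}_{k,s}[\|\barg_{k,s}\|^2]+\tfrac{\alpha}{2\eta}\,\mathbb{E}_{k,s}[\|\barg_{k,s}-g_{k,s}\|^2],
\end{equation*}
and with $\eta=1$ (the only choice giving $-\tfrac14\alpha$ on $\|g_{k,s}\|^2$) the variance coefficient works out to $-\tfrac34\alpha+\tfrac12\alpha+\tfrac12\alpha=\tfrac14\alpha$. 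You cannot then "absorb an $\alpha$ into a constant" to reach $\tfrac12(L+2\kappa_H)\alpha^2$: the hypothesis $\alpha\le\tfrac{1}{2(L+2\kappa_H)}$ gives $\tfrac14\alpha\ge\tfrac12(L+2\kappa_H)\alpha^2$, i.e.\ the inequality points the wrong way. Your argument therefore proves only the strictly weaker bound with variance coefficient $\tfrac14\alpha$, not the lemma as stated.

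The paper avoids this by keeping the term $\tfrac12(L+2\kappa_H)\alpha^2\|\barg_{k,s}\|^2$ untouched until after the conditional expectation. Then the $-\tfrac12\alpha\,\mathbb{E}_{k,s}[\|\barg_{k,s}-g_{k,s}\|^2]$ coming from expanding $-\tfrac12\alpha\,\mathbb{E}_{k,s}[\|\barg_{k,s}\|^2]$ cancels \emph{exactly} against the $+\tfrac12\alpha\,\mathbb{E}_{k,s}[\|\barg_{k,s}-g_{k,s}\|^2]$ produced by Young's inequality, so the only surviving variance contribution is $\tfrac12(L+2\kappa_H)\alpha^2\,\mathbb{E}_{k,s}[\|\barg_{k,s}-g_{k,s}\|^2]$ from expanding $\tfrac12(L+2\kappa_H)\alpha^2\,\mathbb{E}_{k,s}[\|\barg_{k,s}\|^2]$; the condition $\alpha\le\tfrac{1}{2(L+2\kappa_H)}$ is used once, at the very end, only on the $\|g_{k,s}\|^2$ coefficient. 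The fix to your argument is simply to postpone the use of the bound on $\alpha$ until after this cancellation.
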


\begin{proof}
Recall that $\Delta_{k,s} = \alpha\|\barg_{k,s}\|$ by the algorithm design, which combined with the conclusion in Lemma \ref{lem:One-Step Reduction} and $\|H_{k,s}\| \leq \kappa_H$ yields
\begin{align*}
f(\bx_{k,s+1}) - f(\bx_{k,s}) \leq -\alpha\|\barg_{k,s}\|^2  + \alpha\|\barg_{k,s} - g_{k,s} \| \|\barg_{k,s}\| + \frac{1}{2}(L + 2 \kappa_H)\alpha^2\|\barg_{k,s}\|^2.
\end{align*}
By Young's inequality (i.e., $ab \leq \frac{1}{2}a^2 + \frac{1}{2}b^2$), we have
\begin{align*}
f(\bx_{k,s+1}) - f(\bx_{k,s}) &\leq -\alpha\|\barg_{k,s}\|^2  + \frac{1}{2}\alpha\|\barg_{k,s} - g_{k,s} \|^2 + \frac{1}{2}\alpha\|\barg_{k,s}\|^2 + \frac{1}{2}(L + 2 \kappa_H)\alpha^2\|\barg_{k,s}\|^2 \\
& = - \frac{1}{2} \alpha\|\barg_{k,s}\|^2  + \frac{1}{2}\alpha\|\barg_{k,s} - g_{k,s} \|^2  + \frac{1}{2}(L + 2 \kappa_H)\alpha^2\|\barg_{k,s}\|^2.
\end{align*}
Since $\bar{g}_{k,s}$ is an unbiased estimator of $g_{k,s}$, we have $ \mathbb{E}_{k,s}[\|\bar{g}_{k,s}\|^2] = \mathbb{E}_{k,s}[\|g_{k,s} - \bar{g}_{k,s}\|^2] + \|g_{k,s}\|^2$. Therefore, after taking expectation conditional on $\bx_{k,s}$, we have
\begin{equation*}
\mathbb{E}_{k,s}[f(\bx_{k,s+1})] - f(\bx_{k,s}) \leq -\frac{1}{2}\alpha\|g_{k,s}\|^2 + \frac{1}{2}(L+ 2\kappa_H)\alpha^2\|g_{k,s}\|^2 + \frac{1}{2}(L+ 2\kappa_H)\alpha^2\mathbb{E}_{k,s}[\|g_{k,s} - \barg_{k,s}\|^2].
\end{equation*}

Since $\alpha \leq \frac{1}{2(L+ 2\kappa_H)}$, we have $ \frac{1}{2}(L+ 2\kappa_H)\alpha^2\|g_{k,s}\|^2 \leq \frac{1}{4}\alpha\|g_{k,s}\|^2$, thus
\begin{equation*}
     \mathbb{E}_{k,s}[f(\bx_{k,s+1})] - f(\bx_{k,s}) \leq -\frac{1}{4}\alpha\|g_{k,s}\|^2 + \frac{1}{2}(L + 2\kappa_H)\alpha^2\mathbb{E}_{k,s}[\|\bar{g}_{k,s} - g_{k,s} \|^2].
\end{equation*}
We complete the proof.
\end{proof}

\begin{lemma}[Upper Bound of Variance]\label{lem:Upper Bound of Variance}
Let the variance-reduced gradient estimate $\barg_{k,s}$ be computed as in \eqref{var_reduced_grad}. Then for all $(k,s) \in \mathbb{N} \times [\tilde{S}]$, we have
\begin{equation*}
    \mathbb{E}_{k,s}[\|\barg_{k,s} - g_{k,s}\|^2] \leq \frac{L^2}{b}\|\bx_{k,s} - \bx_{k,0}\|^2.
\end{equation*}
\end{lemma}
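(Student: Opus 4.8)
The plan is to exploit the fact that, conditional on $\bx_{k,s}$, the variance-reduced gradient $\barg_{k,s}$ is an average over the mini-batch $I_{k,s}$ of i.i.d.\ terms whose common mean is $g_{k,s}$. Writing $\barg_{k,s} = \frac{1}{b}\sum_{i\in I_{k,s}} w_i$ with $w_i \coloneqq \nabla f_i(\bx_{k,s}) - \nabla f_i(\bx_{k,0}) + g_{k,0}$, observe that $\mathbb{E}_{k,s}[w_i] = g_{k,s} - g_{k,0} + g_{k,0} = g_{k,s}$, so that $\barg_{k,s} - g_{k,s} = \frac{1}{b}\sum_{i\in I_{k,s}}(w_i - g_{k,s})$ is a normalized sum of independent mean-zero random variables (assuming the mini-batch is drawn with replacement, or using the standard sampling-without-replacement reduction of the variance, which only helps). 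By Lemma~\ref{lemma:A2} applied to the $b$ independent mean-zero summands $z_i = \frac{1}{b}(w_i - g_{k,s})$,
\begin{equation*}
\mathbb{E}_{k,s}[\|\barg_{k,s} - g_{k,s}\|^2] = \frac{1}{b^2}\sum_{i\in I_{k,s}} \mathbb{E}_{k,s}[\|w_i - g_{k,s}\|^2] = \frac{1}{b}\,\mathbb{E}_{i}\big[\|w_i - g_{k,s}\|^2\big],
\end{equation*}
where $\mathbb{E}_i$ denotes expectation over a single uniformly random index $i\in[N]$.

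Next I would drop the centering using Lemma~\ref{lemma:A1}: since $g_{k,s} = \mathbb{E}_i[w_i]$, we have $\mathbb{E}_i[\|w_i - g_{k,s}\|^2] \le \mathbb{E}_i[\|w_i\|^2]$. Now expand $w_i = \big(\nabla f_i(\bx_{k,s}) - \nabla f_i(\bx_{k,0})\big) + g_{k,0}$ and note that $g_{k,0} = \mathbb{E}_i[\nabla f_i(\bx_{k,0})]$, i.e.\ $g_{k,0}$ is precisely the mean of $-\big(\nabla f_i(\bx_{k,0}) - \nabla f_i(\bx_{k,s})\big)$ shifted—more directly, write $w_i = \big(\nabla f_i(\bx_{k,s}) - \nabla f_i(\bx_{k,0})\big) - \mathbb{E}_i\big[\nabla f_i(\bx_{k,s}) - \nabla f_i(\bx_{k,0})\big] + g_{k,s}$, but the cleanest route is to apply Lemma~\ref{lemma:A1} a second time with the random variable $u_i \coloneqq \nabla f_i(\bx_{k,s}) - \nabla f_i(\bx_{k,0})$: then $\mathbb{E}_i[u_i] = g_{k,s} - g_{k,0}$, and $w_i - g_{k,s} = u_i - \mathbb{E}_i[u_i]$, hence $\mathbb{E}_i[\|w_i - g_{k,s}\|^2] = \mathbb{E}_i[\|u_i - \mathbb{E}_i[u_i]\|^2] \le \mathbb{E}_i[\|u_i\|^2]$.

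Finally, the per-sample Lipschitz assumption (Assumption~\ref{assump1}) gives $\|u_i\|^2 = \|\nabla f_i(\bx_{k,s}) - \nabla f_i(\bx_{k,0})\|^2 \le L^2\|\bx_{k,s} - \bx_{k,0}\|^2$ for every $i$, so $\mathbb{E}_i[\|u_i\|^2] \le L^2\|\bx_{k,s} - \bx_{k,0}\|^2$. Chaining the three displays yields exactly $\mathbb{E}_{k,s}[\|\barg_{k,s} - g_{k,s}\|^2] \le \frac{L^2}{b}\|\bx_{k,s} - \bx_{k,0}\|^2$. The only mild subtlety—the "hard part," such as it is—is the bookkeeping of which quantity plays the role of the mean in each application of Lemma~\ref{lemma:A1}/\ref{lemma:A2}, together with making the independence-across-the-mini-batch argument precise (trivial for sampling with replacement; for sampling without replacement one invokes the standard fact that it does not increase the variance of the sample mean). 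Everything else is a direct substitution.
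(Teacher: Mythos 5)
Your proof is correct and follows essentially the same route as the paper's: both write $\barg_{k,s}-g_{k,s}$ as a normalized sum of independent mean-zero terms $\nabla f_i(\bx_{k,s})-\nabla f_i(\bx_{k,0})$ minus their common mean, then apply Lemma~\ref{lemma:A2}, drop the centering via Lemma~\ref{lemma:A1}, and finish with the per-component Lipschitz bound from Assumption~\ref{assump1}. Your aside on sampling with versus without replacement is a reasonable extra precaution that the paper leaves implicit, but it does not change the argument.
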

\begin{proof}
Denote $J_{k,s} = \tilde{g}_{k,s}- \tilde{g}_{k,0}$, then the unbiasedness condition in Assumption \ref{assump2} implies that $\mathbb{E}_{k,s}[J_{k,s}] = g_{k,s} - g_{k,0}$. Recall that $\barg_{k,s}=\tilde{g}_{k,s} - (\tilde{g}_{k,0} - g_{k,0})$, we then have
\begin{align*}
    \mathbb{E}_{k,s}[\|\barg_{k,s} - g_{k,s}\|^2] &= \mathbb{E}_{k,s}[\|\tilde{g}_{k,s} - (\tilde{g}_{k,0} - g_{k,0}) - g_{k,s}\|^2]\\
    & =\mathbb{E}_{k,s}[\|J_{k,s} + g_{k,0} - g_{k,s}\|^2] \\
    &= \mathbb{E}_{k,s}[\|J_{k,s} - \mathbb{E}_{k,s}[J_{k,s}]\|^2]. 
\end{align*}
Note that $J_{k,s} = \frac{1}{b}\sum_{i\in I_{k,s}}(\nabla f_i(\bx_{k,s})-\nabla f_i(\bx_{k,0}))$, and $\mathbb{E}_{k,s}[\nabla f_i(\bx_{k,s})-\nabla f_i(\bx_{k,0})] = \mathbb{E}_{k,s}[J_{k,s}]$ for all $i\in [N]$, we have 
\begin{align*}
    \mathbb{E}_{k,s}[\|\barg_{k,s} - g_{k,s}\|^2] &= \frac{1}{b^2}\mathbb{E}_{k,s}[\|\sum_{i\in I_{k,s}}(\nabla f_i(\bx_{k,s}) - \nabla f_i(\bx_{k,0}) - \mathbb{E}_{k,s}[J_{k,s}])\|^2] \\
    &= \frac{1}{b^2}\sum_{i\in I_{k,s}}\mathbb{E}_{k,s}[\|\nabla f_i(\bx_{k,s}) - \nabla f_i(\bx_{k,0}) - \mathbb{E}_{k,s}[J_{k,s}]\|^2] \\
    &\leq \frac{1}{b^2}\sum_{i\in I_{k,s}} \mathbb{E}_{k,s}[\|\nabla f_i(\bx_{k,s}) - \nabla f_i(\bx_{k,0})\|^2] \\
    &\leq \frac{L^2}{b}\|\bx_{k,s} - \bx_{k,0}\|^2,
\end{align*}
where the second equality is by Lemma \ref{lemma:A2}, the first inequality is by Lemma \ref{lemma:A1}, and the last inequality is by the Lipschitz continuity of the component gradients (cf. Assumption \ref{assump1}).
\end{proof}

\begin{theorem}[Global Convergence]\label{thm:Global Convergence}
Under Assumptions \ref{assump1} and \ref{assump2}, we select
$\alpha = \frac{\mu_0 b}{2(L+2\kappa_H)N^\gamma}\in(0,1]$, $b = \mu_1 N^{\gamma}$ with $\mu_0,\mu_1,\gamma \in(0,1]$, and $ S \leq \bigg\lfloor\frac{N^{3\gamma/2}}{\mu_0(b+\frac{\mu_0L^2b}{2(L+2\kappa_H)})}\bigg\rfloor$, then for all $k\geq 0$ and $s\in [\tilde{S}]$, there exist universal constants $\mu_0$ and $v_0 \in (0, 1)$ such that
\begin{equation*}
    \mathbb{E}\left[\frac{1}{(K+1)S}\sum_{k=0}^K\sum_{s=0}^{S-1}\|g_{k,s}\|^2\right] \leq \frac{2(L+\kappa_H)(\mathbb{E}[f(\bx_{0,0})] - f_{\inf})}{(K+1)S\mu_0\mu_1 v_0}.
\end{equation*}

\end{theorem}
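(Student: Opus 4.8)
The plan is to run the argument through the Lyapunov function $\Phi_{k,s} = f(\bx_{k,s}) + \lambda_s\|\bx_{k,s} - \bx_{k,0}\|^2$ indicated in the sketch, prove a per-inner-step expected decrease of the form $\mathbb{E}_{k,s}[\Phi_{k,s+1}] \le \Phi_{k,s} - \Lambda_{\min}\,\|g_{k,s}\|^2$ with $\Lambda_{\min}$ a positive constant, and then telescope over both loops. The prescribed choices give $\alpha = \mu_0\mu_1/(2(L+2\kappa_H)) \le 1/(2(L+2\kappa_H))$ (using $\mu_0,\mu_1\in(0,1]$), so Lemma~\ref{lem: One-Step Expected Reduction} applies; combined with the variance bound of Lemma~\ref{lem:Upper Bound of Variance} it gives $\mathbb{E}_{k,s}[f(\bx_{k,s+1})] \le f(\bx_{k,s}) - \tfrac14\alpha\|g_{k,s}\|^2 + \tfrac{\alpha^2 L^2(L+2\kappa_H)}{2b}\|\bx_{k,s}-\bx_{k,0}\|^2$. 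Separately, writing $\bx_{k,s+1} = \bx_{k,s}+\Delta\bx_{k,s}$ and expanding $\|\bx_{k,s+1}-\bx_{k,0}\|^2$, I would bound the cross term with Cauchy--Schwarz and Young's inequality carrying a free parameter $z>0$, use the trust-region constraint $\|\Delta\bx_{k,s}\| \le \Delta_{k,s} = \alpha\|\barg_{k,s}\|$, take $\mathbb{E}_{k,s}[\cdot]$, and apply the bias--variance identity $\mathbb{E}_{k,s}[\|\barg_{k,s}\|^2] = \|g_{k,s}\|^2 + \mathbb{E}_{k,s}[\|\barg_{k,s}-g_{k,s}\|^2] \le \|g_{k,s}\|^2 + \tfrac{L^2}{b}\|\bx_{k,s}-\bx_{k,0}\|^2$ (Lemma~\ref{lem:Upper Bound of Variance} again). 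This yields $\mathbb{E}_{k,s}[\|\bx_{k,s+1}-\bx_{k,0}\|^2] \le \bigl(1+\alpha z+(\alpha^2+\tfrac{\alpha}{z})\tfrac{L^2}{b}\bigr)\|\bx_{k,s}-\bx_{k,0}\|^2 + (\alpha^2+\tfrac{\alpha}{z})\|g_{k,s}\|^2$. Multiplying this by $\lambda_{s+1}$ and adding the $f$-inequality, the coefficient of $\|\bx_{k,s}-\bx_{k,0}\|^2$ collapses to exactly $\lambda_s$ by the defining recursion, leaving $\mathbb{E}_{k,s}[\Phi_{k,s+1}] \le \Phi_{k,s} - \bigl(\tfrac14\alpha - \lambda_{s+1}(\alpha^2+\tfrac{\alpha}{z})\bigr)\|g_{k,s}\|^2$.

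Next I would control $\Lambda_{\min} := \min_{0\le s\le S-1}\bigl\{\tfrac14\alpha - \lambda_{s+1}(\alpha^2+\tfrac{\alpha}{z})\bigr\}$ from below. Since $\lambda_S=0$ and the recursion is run backward with multiplicative factor $\theta := 1+\alpha z+(\alpha^2+\tfrac{\alpha}{z})\tfrac{L^2}{b}>1$, the sequence $\lambda_s$ is nonincreasing in $s$, and summing the geometric recursion gives $\lambda_0 = \tfrac{\alpha^2 L^2(L+2\kappa_H)}{2b}\cdot\tfrac{\theta^S-1}{\theta-1}$. I would pick $z$ so that $\alpha z\asymp 1/S$ (e.g. $z=1/(\alpha S)$); using $b=\mu_1 N^\gamma$ together with the prescribed bound $S=\mathcal{O}(N^{\gamma/2})$, one checks $S(\theta-1)=\mathcal{O}(1)$, hence $\theta^S=\mathcal{O}(1)$ and $\tfrac{\theta^S-1}{\theta-1}=\mathcal{O}(S)$. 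Substituting $\alpha=\mu_0\mu_1/(2(L+2\kappa_H))$ and the admissible range of $S$ then shows $\lambda_{s+1}(\alpha^2+\tfrac{\alpha}{z})\le\lambda_0(\alpha^2+\tfrac{\alpha}{z})$ is a constant strictly below $\tfrac14\alpha$ once the constants (in particular $\mu_0$) are chosen appropriately; this is precisely where the parameter hypotheses of the theorem are consumed, and it gives $\Lambda_{\min}\ge\tfrac{\mu_0\mu_1 v_0}{2(L+\kappa_H)}$ for a suitable $v_0\in(0,1)$.

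Finally, because $\lambda_S=0$ and $\bx_{k+1,0}=\bx_{k,S}$, we have $\Phi_{k,0}=f(\bx_{k,0})$ and $\Phi_{k,S}=f(\bx_{k,S})=\Phi_{k+1,0}$, so the Lyapunov values chain seamlessly across outer loops. Taking total expectations in the one-step bound and summing over $s=0,\dots,S-1$ and $k=0,\dots,K$ gives $\Lambda_{\min}\sum_{k=0}^{K}\sum_{s=0}^{S-1}\mathbb{E}[\|g_{k,s}\|^2]\le\mathbb{E}[f(\bx_{0,0})]-\mathbb{E}[f(\bx_{K+1,0})]\le\mathbb{E}[f(\bx_{0,0})]-f_{\inf}$, the last step by the lower-boundedness of $f$ in Assumption~\ref{assump1}. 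Dividing by $(K+1)S\Lambda_{\min}$ and inserting $\Lambda_{\min}\ge\tfrac{\mu_0\mu_1 v_0}{2(L+\kappa_H)}$ yields the claimed inequality.

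The main obstacle is the middle step: verifying that, under the prescribed scaling of $\alpha$, $b$ and $S$, the multiplicative factor $\theta^S$ stays bounded and the resulting $\lambda_0$ is small enough that $\tfrac14\alpha-\lambda_{s+1}(\alpha^2+\tfrac{\alpha}{z})$ is a positive constant uniformly in $s$; choosing $z$ and tracking the interplay of $\mu_0,\mu_1,\gamma$ is the delicate part, whereas the one-step estimate and the telescoping are routine once Lemmas~\ref{lem: One-Step Expected Reduction} and~\ref{lem:Upper Bound of Variance} are available.
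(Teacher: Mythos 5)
Your proposal follows essentially the same route as the paper's proof: the same Lyapunov function $\Phi_{k,s}=f(\bx_{k,s})+\lambda_s\|\bx_{k,s}-\bx_{k,0}\|^2$ with the same backward recursion for $\lambda_s$, the same expansion of $\|\bx_{k,s+1}-\bx_{k,0}\|^2$ with a Young parameter $z$ chosen so that $\alpha z\asymp 1/S$ (the paper takes $z=2(L+2\kappa_H)N^{-\gamma/2}$, which is the same order), the same geometric-sum bound $\lambda_0=\frac{\alpha^2L^2(L+2\kappa_H)}{2b}\cdot\frac{\theta^S-1}{\theta-1}$ controlled via $S(\theta-1)=\mathcal{O}(1)$ so that $\theta^S\le e$, and the same telescoping across inner and outer loops using $\lambda_S=0$ and $\Phi_{k,0}=f(\bx_{k,0})$. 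The step you flag as delicate is exactly where the paper spends its remaining effort, and your outline of how to resolve it (bounding $\lambda_0(\alpha^2+\alpha/z)$ strictly below $\alpha/4$ by choosing $\mu_0$ small, yielding $\Lambda_{\min}\ge\frac{\mu_0\mu_1 v_0}{2(L+\kappa_H)}$) is precisely what the paper does.
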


\begin{proof}
Combining the results of Lemma \ref{lem: One-Step Expected Reduction} and Lemma \ref{lem:Upper Bound of Variance}, we have
\begin{equation*}
\mathbb{E}_{k,s}[f(\bx_{k,s+1})] - f(\bx_{k,s}) \leq -\frac{1}{4}\alpha\|g_{k,s}\|^2 + \frac{L^2(L + 2\kappa_H)}{2b} \alpha^2 \|\bx_{k,s} - \bx_{k,0}\|^2. 
\end{equation*}
Next we analyze the term $\|\bx_{k,s} - \bx_{k,0}\|^2$. Note that
\begin{align*}
    \mathbb{E}_{k,s}[\|\bx_{k,s+1} - \bx_{k,0}\|^2] &= \mathbb{E}_{k,s}[\|\bx_{k,s+1} - \bx_{k,s} + \bx_{k,s} - \bx_{k,0}\|^2] \\
    &= \mathbb{E}_{k,s}[\|\bx_{k,s+1} - \bx_{k,s}\|^2]  + 2\mathbb{E}_{k,s}[(\bx_{k,s+1} - \bx_{k,s})^T(\bx_{k,s} - \bx_{k,0})] + \|\bx_{k,s} - \bx_{k,0}\|^2\\
    & \leq  \mathbb{E}_{k,s}[\|\bx_{k,s+1} - \bx_{k,s}\|^2]  + 2\mathbb{E}_{k,s}[\|\bx_{k,s+1} - \bx_{k,s}\|]\|\bx_{k,s} - \bx_{k,0}\| + \|\bx_{k,s} - \bx_{k,0}\|^2\\
     & \leq  \mathbb{E}_{k,s}[\Delta_{k,s}^2]  + 2\mathbb{E}_{k,s}[\Delta_{k,s}]\|\bx_{k,s} - \bx_{k,0}\| + \|\bx_{k,s} - \bx_{k,0}\|^2\\
    & = \alpha^2\mathbb{E}_{k,s}[\|\barg_{k,s}\|^2]  + 2\alpha\mathbb{E}_{k,s}[\|\barg_{k,s}\|]\|\bx_{k,s} - \bx_{k,0}\| + \|\bx_{k,s} - \bx_{k,0}\|^2 \\
    & \leq \alpha^2\mathbb{E}_{k,s}[\|\barg_{k,s}\|^2] + \frac{\alpha}{z}\mathbb{E}_{k,s}[\|\barg_{k,s}\|^2] + \alpha z \|\bx_{k,s} - \bx_{k,0}\|^2 + \|\bx_{k,s} - \bx_{k,0}\|^2\\
    & = \left(1 + \frac{1}{\alpha z}\right)\alpha^2\mathbb{E}_{k,s}[\|\barg_{k,s}\|^2] + (1 + \alpha z) \|\bx_{k,s} - \bx_{k,0}\|^2.
\end{align*}
Recall that $\mathbb{E}_{k,s}[\|\bar{g}_{k,s}\|^2] = \|g_{k,s}\|^2 + \mathbb{E}_{k,s}[\|g_{k,s}-\bar{g}_{k,s}\|^2] \leq \|g_{k,s}\|^2 + L^2/b\cdot \|\bx_{k,s} - \bx_{k,0}\|^2$, we have
\begin{align*}
  \mathbb{E}_{k,s}[\|\bx_{k,s+1} - \bx_{k,0}\|^2]  & \leq \left(1+\frac{1}{\alpha z}\right)\alpha^2\|g_{k,s}\|^2 + \left(1+\alpha z+\left(\alpha^2 + \frac{\alpha}{ z}\right)\frac{ L^2}{b}\right)\|\bx_{k,s} - \bx_{k,0}\|^2.
\end{align*}
To further the analysis, we define the following Lyapunov function $\Phi_{k,s} = f(\bx_{k,s}) + \lambda_s\|\bx_{k,s} - \bx_{k,0}\|^2$, where
\begin{align*}
    \lambda_s = \frac{\alpha^2 L^2(L+ 2\kappa_H)}{2b} + \lambda_{s+1}\left(1+\alpha z+\left(\alpha^2 + \frac{\alpha}{ z}\right)\frac{ L^2}{b}\right)
\end{align*}
and $\lambda_S=0$. Note that $\Phi_{k,0} = f(\bx_{k,0})$.
Based on the above definition, we find
\begin{align*}
   \mathbb{E}_{k,s}[\Phi_{k,s+1} - \Phi_{k,s}] & = \mathbb{E}_{k,s}[f(\bx_{k,s+1}) - f(\bx_{k,s})] + \lambda_{s+1} \mathbb{E}_{k,s}[\|\bx_{k,s+1} - \bx_{k,0}\|^2] - \lambda_s\|\bx_{k,s} - \bx_{k,0}\|^2 \\
   & \leq -\frac{1}{4}\alpha\|g_{k,s}\|^2 + \frac{L^2(L + 2\kappa_H)}{2b} \alpha^2 \|\bx_{k,s} - \bx_{k,0}\|^2  - \lambda_s\|\bx_{k,s} - \bx_{k,0}\|^2 \\
   & \quad + \lambda_{s+1}\left(1+\frac{1}{\alpha z}\right)\alpha^2\|g_{k,s}\|^2 + \left(1+\alpha z+\left(\alpha^2 + \frac{\alpha}{ z}\right)\frac{ L^2}{b}\right) \|\bx_{k,s} - \bx_{k,0}\|^2\\
   & = -\left(\frac{1}{4}\alpha - \lambda_{s+1}\left(1+\frac{1}{\alpha z}\right)\alpha^2\right)\|g_{k,s}\|^2.
\end{align*}
Denoting
\begin{equation*}
    \Lambda_s = \frac{1}{4}\alpha - \lambda_{s+1}\left(1+\frac{1}{\alpha z}\right)\alpha^2, 
\end{equation*}
and setting $\Lambda_{\min} = \min_{s\in[\tilde{S}]} \Lambda_s$, we have $\mathbb{E}_{k,s} [\Phi_{k,s+1} - \Phi_{k,s}] \leq -\Lambda_{\min}\|g_{k,s}\|^2$. Rearranging the terms, we have
\begin{equation*}
\|g_{k,s}\|^2 \leq \frac{\mathbb{E}_{k,s}[\Phi_{k,s} - \Phi_{k,s+1}]}{\Lambda_{\min}}.
\end{equation*}
Taking full expectation and summing over $s = 0,\dots,S-1$, we have
\begin{equation*}
    \sum_{s=0}^{S-1}\mathbb{E}[\|g_{k,s}\|^2] \leq \frac{\mathbb{E}[\Phi_{k,0}] - \mathbb{E}[\Phi_{k,S}]}{\Lambda_{\min}} = \frac{\mathbb{E}[f(\bx_{k,0}) - f(\bx_{k+1,0})]}{\Lambda_{\min}}, 
\end{equation*}
where we use the facts that $\Phi_{k,0} = f(\bx_{k,0}) $ and $\lambda_S=0$. Summing over $k = 0,1,2,\cdots,K$ for any finite $K$, we have
\begin{equation*}
    \sum_{k=0}^K\sum_{s=0}^{S-1}\mathbb{E}[\|g_{k,s}\|^2] \leq \frac{\mathbb{E}[f(\bx_{0,0})] - f_{\inf}}{\Lambda_{\min}}.
\end{equation*}
We then divide both sides by $(K+1)S$ and yield
\begin{equation}\label{thm:conv}
     \mathbb{E}\left[\frac{1}{(K+1)S}\sum_{k=0}^K\sum_{s=0}^{S-1}\|g_{k,s}\|^2\right] \leq \frac{\mathbb{E}[f(\bx_{0,0})] - f_{\inf}}{(K+1)S \cdot \Lambda_{\min}}.
\end{equation}
Next we lower bound $\Lambda_{\min}$ by setting parameters properly. Specifically, we set $\mu_0\in(0,1]$ as a parameter to be specified later, $\gamma\in(0,1]$, and
\begin{align*}
    \alpha = \frac{\mu_0 b}{2(L+2\kappa_H)N^\gamma},\quad z = \frac{2(L+2\kappa_H)}{N^{\gamma/2}}, \quad b = N^{\gamma},\quad S \leq \bigg\lfloor\frac{N^{3\gamma/2}}{\mu_0(b+\frac{\mu_0L^2b}{2(L+2\kappa_H)})}\bigg\rfloor.
\end{align*}
By the recursive structure of $\lambda_s$, we have
\begin{equation*}
    \lambda_0=\frac{\alpha^2 L^2(L+2\kappa_H)}{2b}\cdot \frac{(1+\rho)^S-1}{\rho},
\end{equation*}
where
\begin{align*}
    \rho & = \alpha z + \left(\alpha^2 + \frac{\alpha}{ z}\right)\frac{ L^2}{b} \\
    & = \frac{\mu_0 b}{N^{3\gamma/2}} + \frac{\mu_0^2L^2 b}{4(L+2\kappa_H)^2N^{2\gamma}}+\frac{\mu_0L^2}{4(L+\kappa_H)^2N^{\gamma/2}}\\
    & = \frac{\mu_0 b}{N^{3\gamma/2}} + \frac{\mu_0^2L^2 b}{4(L+2\kappa_H)^2N^{2\gamma}}+\frac{\mu_0L^2b}{4(L+\kappa_H)^2N^{3\gamma/2}} \\
    & \leq \mu_0 N^{-3\gamma/2} \left( b + \frac{L^2b}{2(L+2\kappa_H)^2}\right).
\end{align*}
Therefore, we have
\begin{align*}
    \lambda_0 & =  \frac{\mu_0^2 L^2 b}{8(L+2\kappa_H)N^{2\gamma}} \cdot \frac{(1+\rho)^S-1}{\frac{\mu_0 b}{N^{3\gamma/2}} + \frac{\mu_0^2L^2 b}{4(L+2\kappa_H)^2N^{2\gamma}}+\frac{\mu_0L^2b}{4(L+\kappa_H)^2N^{3\gamma/2}}}\\
    & \leq \frac{\mu_0^2 L^2 b}{8(L+2\kappa_H)N^{2\gamma}} \cdot \frac{(1+\rho)^S-1}{\frac{\mu_0 b}{N^{3\gamma/2}}}\\
    & = \frac{\mu_0 L^2(e-1)}{8(L+2\kappa_H) }N^{-\gamma/2},
\end{align*}
where we also use the upper bound of $S$ and the monotonicity of the function $(1+1/l)^l$ and the fact that $(1+1/l)^l\rightarrow e$ as $l\rightarrow\infty$.
Plugging in the upper bound of $\lambda_0$ to the definition of $\Lambda_s$, noticing that $\lambda_s$ is monotone decreasing with respect to $s$, we have
\begin{align*}
    \Lambda_{\min} & =  \frac{1}{4}\alpha - \lambda_{0}\left(1+\frac{1}{\alpha z}\right)\alpha^2\\
    & \geq \frac{1}{4}\alpha - \frac{\alpha\mu_0 L^2(e-1)}{8(L+2\kappa_H) }N^{-\gamma/2}\left(\alpha+\frac{1}{ z}\right)\\
    & = \alpha \left[\frac{1}{4} - \frac{\mu_0 L^2(e-1)}{8(L+2\kappa_H) }N^{-\gamma/2}\left(\frac{\mu_0 b}{2(L+2\kappa_H)N^\gamma}+\frac{N^{\gamma/2}}{2(L+2\kappa_H)}\right)\right]\\
    & \geq \alpha \left[\frac{1}{4} - \frac{\mu_0 L^2(e-1)(\mu_0 b+1)}{16(L+2\kappa_H)^2 }\right].
\end{align*}
We denote $v_0\coloneqq \frac{1}{4} - \frac{\mu_0 L^2(e-1)(\mu_0 b+1)}{16(L+2\kappa_H)^2 }$. When $\mu_0$ is set properly, we can ensure $v_0 > 0$. Therefore, we have
\begin{equation*}
  \Lambda_{\min} \geq \frac{\mu_0bv_0}{2(L+\kappa_H)N^{\gamma}}.  
\end{equation*}
Combining the above lower bound of $\Lambda_{\min}$ with \eqref{thm:conv} and the selection of $b$ leads to the conclusion.
\end{proof}

\section{Implementation details and full results}\label{append_C}

\subsection{General implementation and hyperparameter tuning}

We implement all algorithms in Python. For nonconvex experiments involving high-dimensional data, we utilize PyTorch to leverage automatic differentiation for efficient gradient and Hessian-vector product computations. The trust-region subproblems are solved using the Steihaug-CG method \citep[][Algorithm 7.2]{Nocedal2006Numerical}. To balance computational cost and solution quality, we set the maximum number of CG iterations to 200 for experiments in Section~\ref{sec:convex} and 500 for all other experiments.

For hyperparameter tuning, we conduct grid search for all methods. For variance-reduced baselines (SVRG, SAGA, SARAH), learning rates are searched over a logarithmic grid scaled by the theoretical Lipschitz constant $L_{\max}$. For SGD and Adam, we perform a logarithmic grid search over typical operational ranges. For trust-region methods (deterministic trust-region, TRish, and TRSVR), we search over their respective key hyperparameters, including the initial trust-region radius $\Delta_0$ and the radius-control parameter $\alpha$. The optimal configurations for each experiment are detailed in the following subsections.

\subsection{Convex optimization}

We generated each entry of the initialization from a standard normal distribution and fixed TRSVR parameters: $b = 200$ and $S = 100$. In tuning the learning rate and the radius-control parameter, we used a pre-calculated Lipschitz constant $L \approx 50.1$. Table~\ref{tab:results} summarizes the optimal configurations. Results are reported in Figures \ref{fig:synthetic_gap_epochs}-\ref{fig:synthetic_grad_passes}.

\begin{table}[ht]
    \centering
    \caption{Optimal Configurations for Synthetic Data}
    \label{tab:results}
    \begin{tabular}{lc}
        \toprule
        \textbf{Method} & \textbf{Optimal Configuration} \\
        \midrule
        TRSVR-Id & $\alpha = 0.05$ \\
        TRSVR-EstH & $\alpha = 0.06$ \\
        \midrule
        SVRG & LR $= 3.04 \times 10^{-2}$ \\
        SARAH & LR $= 1.00 \times 10^{-3}$ \\
        SAGA & LR $= 1.31 \times 10^{-2}$ \\
        \bottomrule
    \end{tabular}
\end{table}

\begin{figure}[H]
    \centering
    \begin{minipage}{0.45\textwidth}
        \centering
        \includegraphics[width=\linewidth]{ 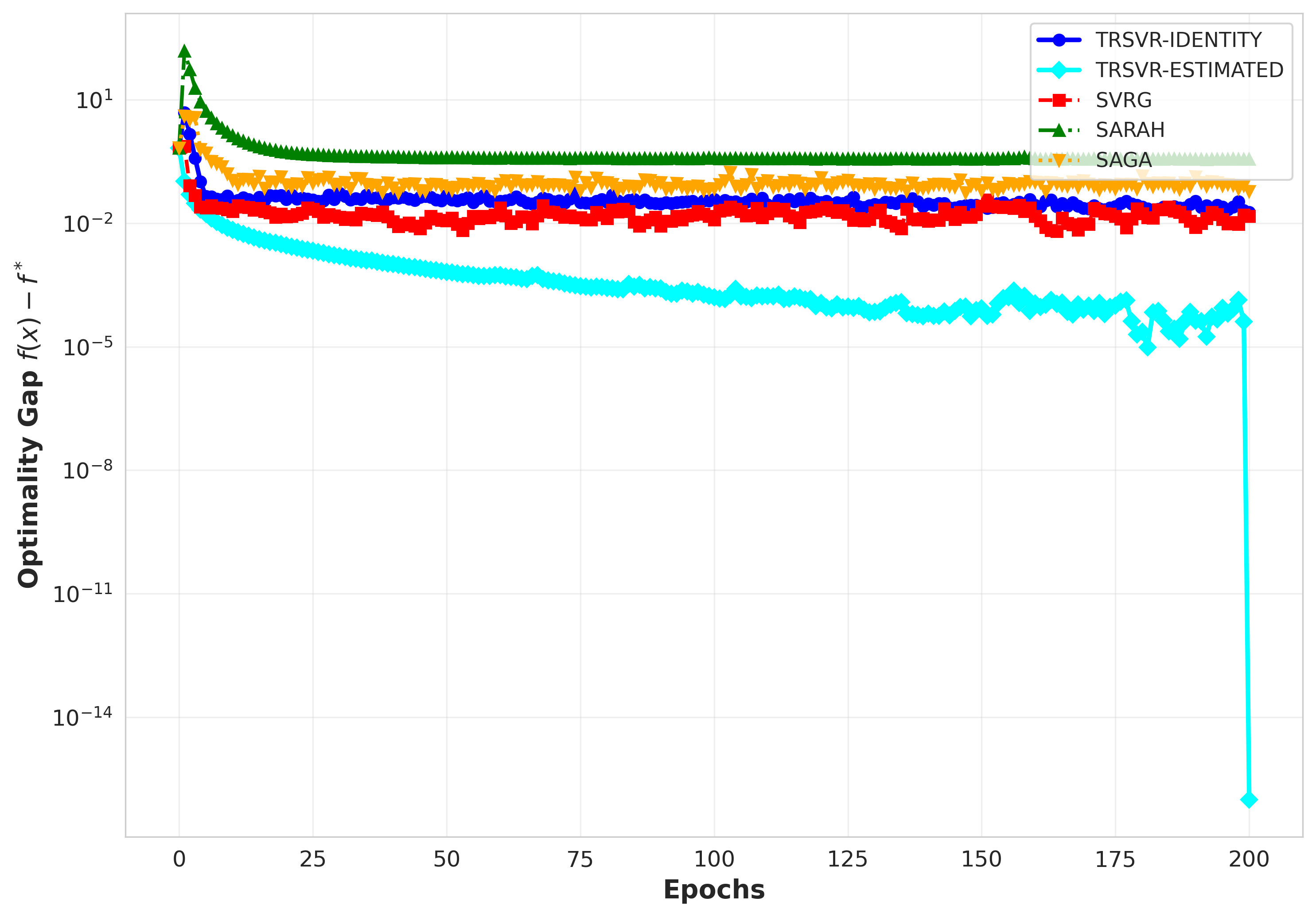}
        \caption{Optimality gap vs. epochs for synthetic data.}
        \label{fig:synthetic_gap_epochs}
    \end{minipage}\hfill
    \begin{minipage}{0.45\textwidth}
        \centering
        \includegraphics[width=\linewidth]{ Figures/synthetic/synthetic_optimality_gap_passes__SYNTHETIC_Convergence_Optimality_Gap_vs_Effective_Passes.png}
        \caption{Optimality gap vs. effective passes for synthetic data.}
        \label{fig:synthetic_gap_passes}
    \end{minipage}
    
    \vspace{0.5cm}
    
    \begin{minipage}{0.45\textwidth}
        \centering
        \includegraphics[width=\linewidth]{ 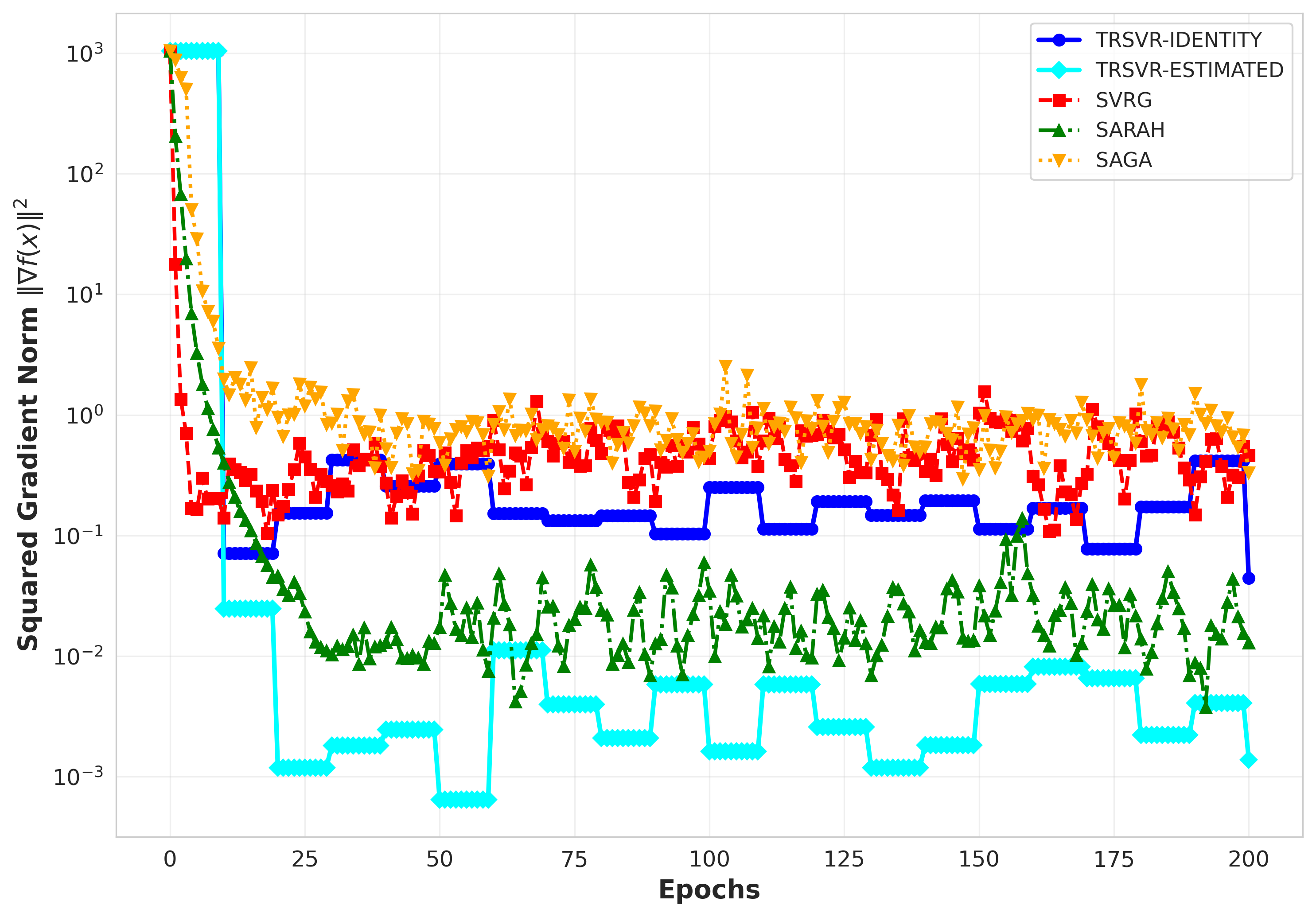}
        \caption{Squared gradient norm vs. epochs for synthetic data.}
        \label{fig:synthetic_grad_epochs}
    \end{minipage}\hfill
    \begin{minipage}{0.45\textwidth}
        \centering
        \includegraphics[width=\linewidth]{ Figures/synthetic/synthetic_grad_norm_passes__SYNTHETIC_Convergence_Squared_Gradient_Norm_vs_Effective_Passes.png}
        \caption{Squared gradient norm vs. effective passes for synthetic data.}
        \label{fig:synthetic_grad_passes}
    \end{minipage}
\end{figure}

\subsection{Nonconvex optimization}

For the TRSVR methods, we fixed the mini-batch size $b=200$ and inner-loop length $S=200$. We summarize the optimal configurations in Table~\ref{tab:optimal_params}. Results are reported in Figures \ref{fig:rcv1_gap_epochs}-\ref{fig:mushroom_grad_passes}.

\begin{table}[ht]
    \centering
    \caption{Optimal Configurations for Nonconvex Experiments}
    \label{tab:optimal_params}
    \begin{tabular}{lcccc}
        \toprule
        & \multicolumn{2}{c}{\textbf{RCV1}} & \multicolumn{2}{c}{\textbf{Mushroom}} \\
        \cmidrule(lr){2-3} \cmidrule(lr){4-5}
        \textbf{Method} & \textbf{Parameter} & \textbf{Value} & \textbf{Parameter} & \textbf{Value} \\
        \midrule
        TRSVR-Id & $\alpha$ & 0.10 & $\alpha$ & 0.08 \\
        TRSVR-EstH & $\alpha$ & 0.15 & $\alpha$ & 0.09 \\
        SAGA & Learning Rate & 0.08 & Learning Rate & 0.05 \\
        SVRG & Learning Rate & 0.045 & Learning Rate & 0.015 \\
        SARAH & Learning Rate & 0.005 & Learning Rate & 0.005 \\
        \bottomrule
    \end{tabular}
\end{table}

\begin{figure}[H]
    \centering
    \begin{minipage}{0.4\textwidth}
        \centering
        \includegraphics[width=\linewidth]{ 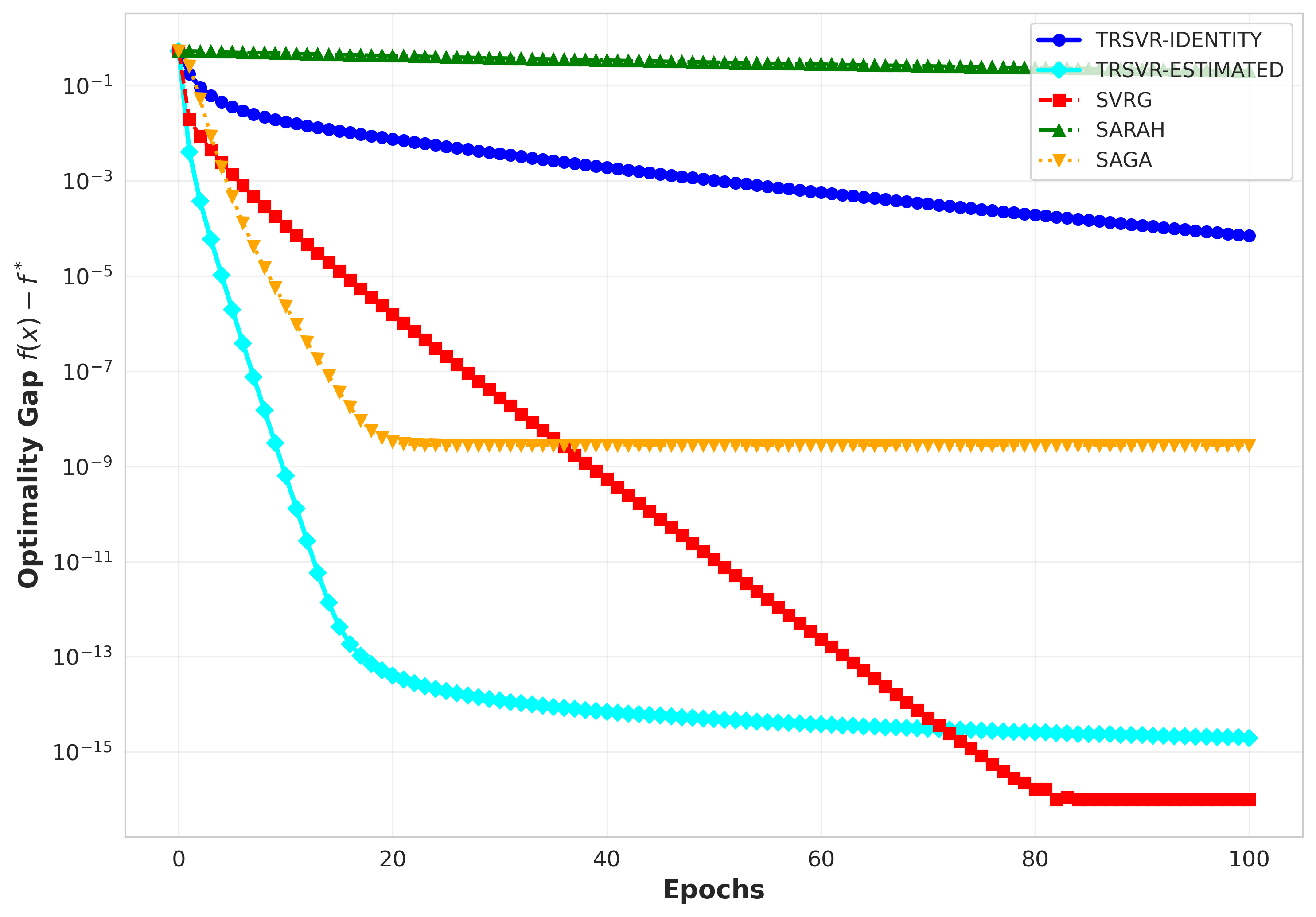}
        \caption{Optimality gap vs. epochs for RCV1.}
        \label{fig:rcv1_gap_epochs}
    \end{minipage}\hfill
    \begin{minipage}{0.4\textwidth}
        \centering
        \includegraphics[width=\linewidth]{ 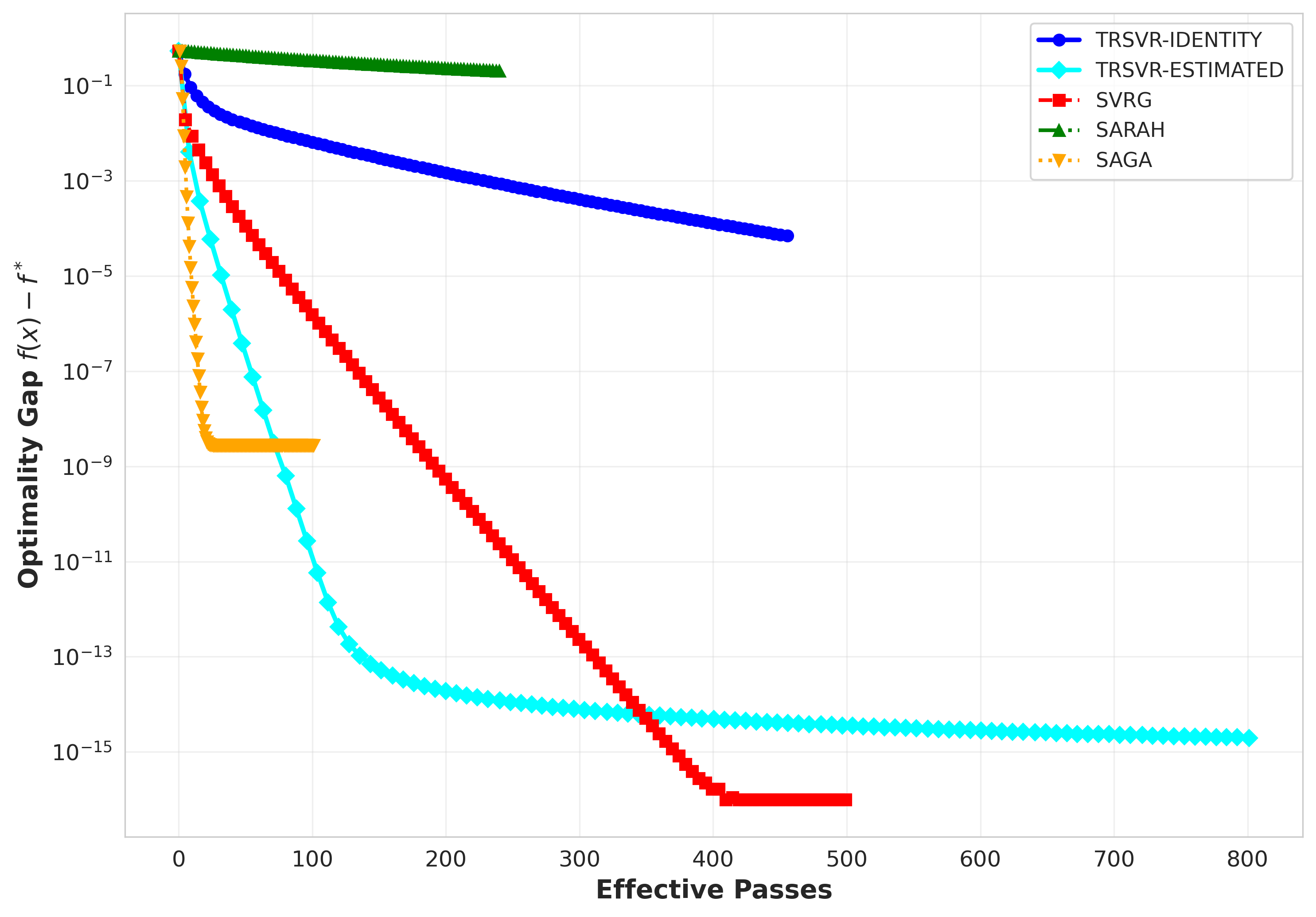}
        \caption{Optimality gap vs. effective passes for RCV1.}
        \label{fig:rcv1_gap_passes}
    \end{minipage}

    \begin{minipage}{0.4\textwidth}
        \centering
        \includegraphics[width=\linewidth]{ Figures/Stochastic_Method_Comparison/rcv1/rcv1_grad_norm_epochs__RCV1_Convergence_Squared_Gradient_Norm_vs_Epochs.png}
        \caption{Squared gradient norm vs. epochs for RCV1.}
        \label{fig:rcv1_grad_epochs}
    \end{minipage}\hfill
    \begin{minipage}{0.4\textwidth}
        \centering
        \includegraphics[width=\linewidth]{ 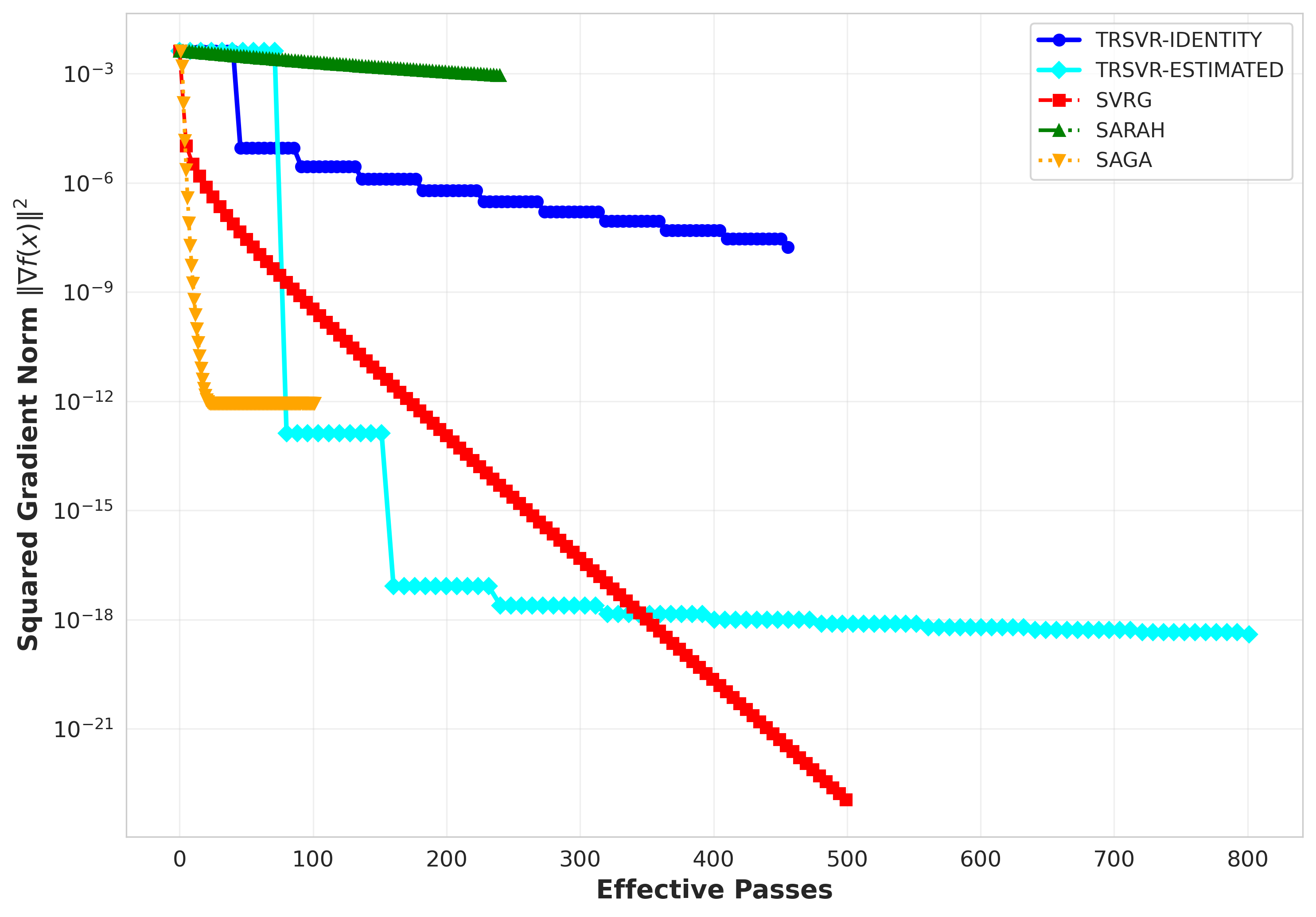}
        \caption{Squared gradient norm vs. effective passes for RCV1.}
        \label{fig:rcv1_grad_passes}
    \end{minipage}

    \begin{minipage}{0.4\textwidth}
        \centering
        \includegraphics[width=\linewidth]{ 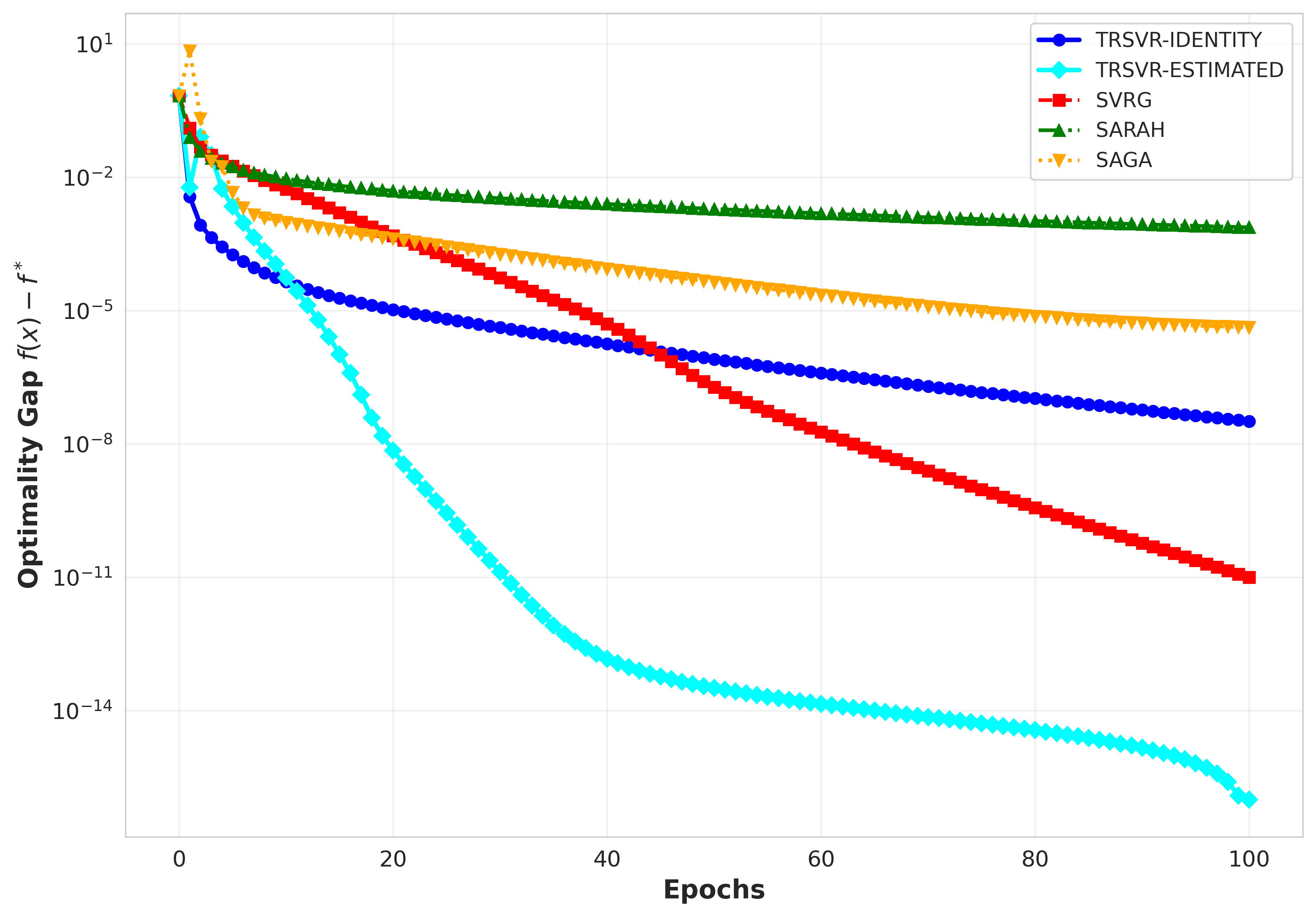}
        \caption{Optimality gap vs. epochs for Mushroom.}
        \label{fig:mushroom_gap_epochs}
    \end{minipage}\hfill
    \begin{minipage}{0.4\textwidth}
        \centering
        \includegraphics[width=\linewidth]{ 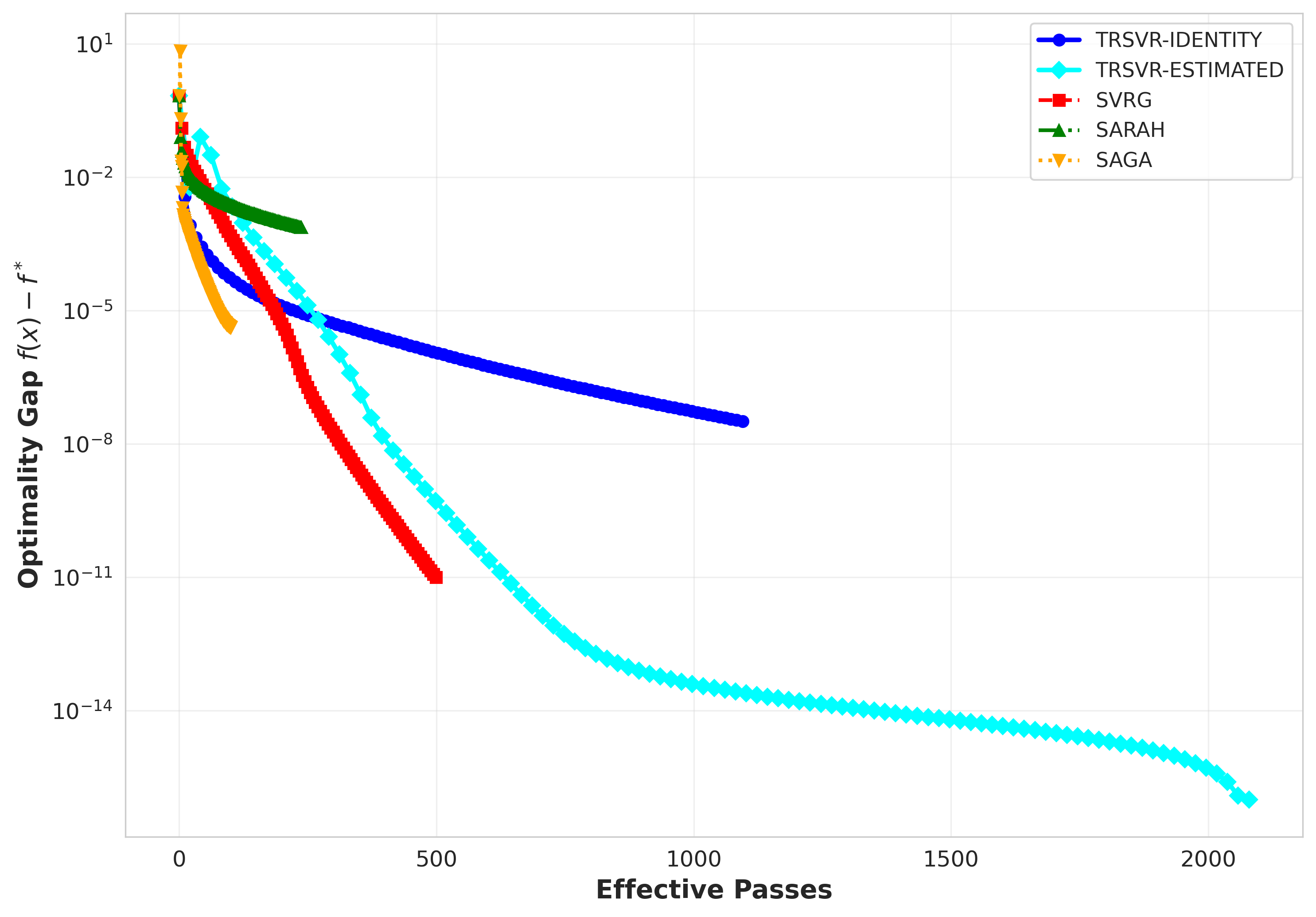}
        \caption{Optimality gap vs. effective passes for Mushroom.}
        \label{fig:mushroom_gap_passes}
    \end{minipage}

    \begin{minipage}{0.4\textwidth}
        \centering
        \includegraphics[width=\linewidth]{ Figures/Stochastic_Method_Comparison/mushroom/mushrooms_grad_norm_epochs__MUSHROOMS_Convergence_Squared_Gradient_Norm_vs_Epochs.png}
        \caption{Squared gradient norm vs. epochs for Mushroom.}
        \label{fig:mushroom_grad_epochs}
    \end{minipage}\hfill
    \begin{minipage}{0.4\textwidth}
        \centering
        \includegraphics[width=\linewidth]{ 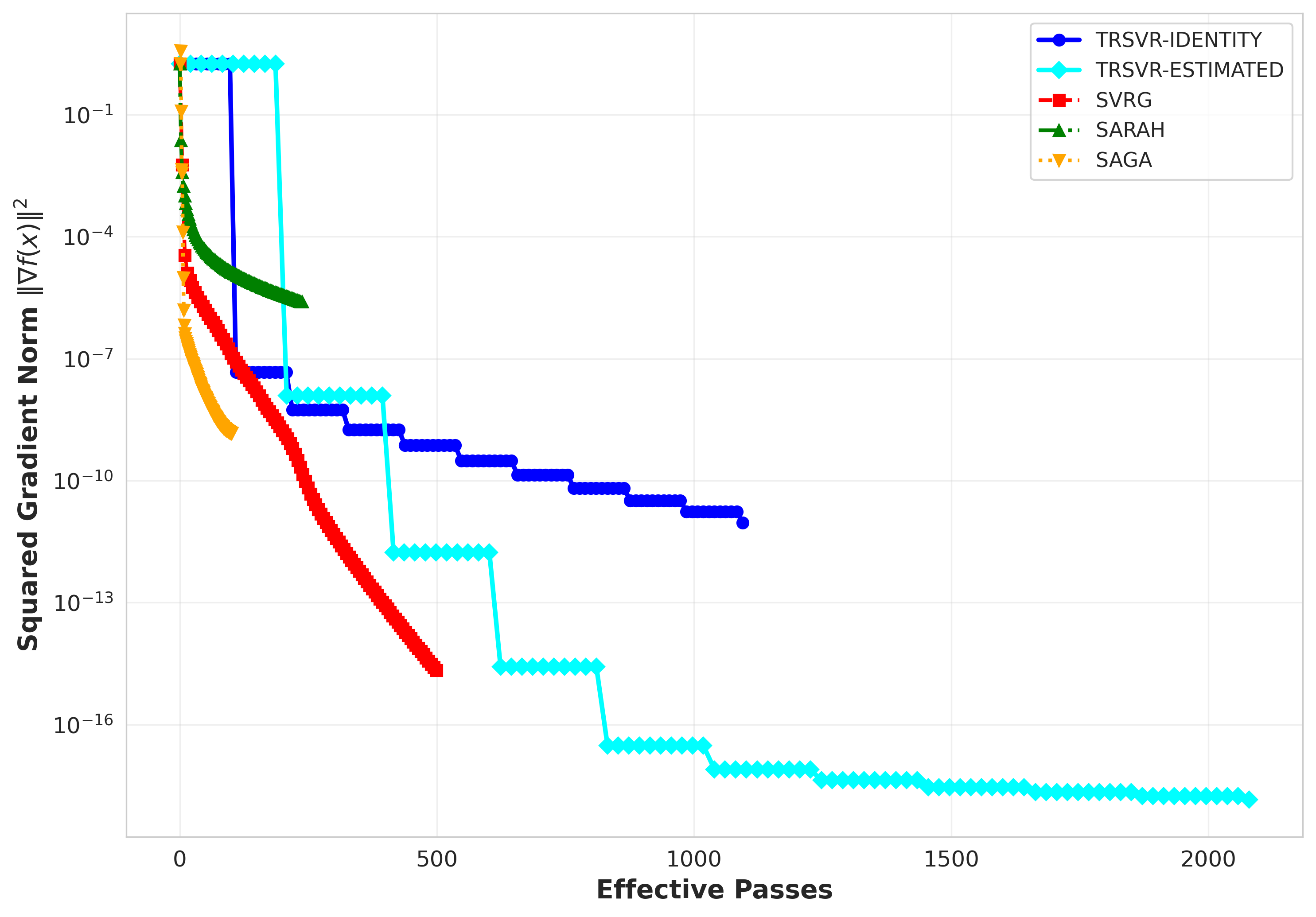}
        \caption{Squared gradient norm vs. effective passes for Mushroom.}
        \label{fig:mushroom_grad_passes}
    \end{minipage}
\end{figure}

\subsection{Comparison with SGD and Adam}

We compared TRSVR with SGD and Adam implemented in PyTorch. All algorithms were run for a fixed budget of $K=100$ epochs. We summarize the optimal configurations in Table~\ref{tab:benchmark_params}.
Results are reported in Figures \ref{fig:covertype_gap_epochs}-\ref{fig:ijcnn1_grad_passes}.

\begin{table}[h]
    \centering
    \caption{Optimal Configurations for SGD and Adam Comparison. For TRSVR, parameters denote $\alpha$ and inner-loop $(b, S)$. For SGD and Adam, parameters denote the learning rate.}
    \label{tab:benchmark_params}
    \begin{tabular}{lccc}
        \toprule
        \textbf{Dataset} & \textbf{TRSVR} & \textbf{SGD} & \textbf{Adam} \\
        \midrule
        Covertype & $\alpha=0.0043$, $(100, 400)$ & $3.7 \times 10^{-5}$ & $9.5 \times 10^{-5}$ \\
        IJCNN1 & $\alpha=0.005$, $(100, 400)$ & $2.3 \times 10^{-4}$ & $2.3 \times 10^{-4}$ \\
        \bottomrule
    \end{tabular}
\end{table}

\begin{figure}[h]
    \centering
    \begin{minipage}{0.4\textwidth}
        \centering
        \includegraphics[width=\linewidth]{ 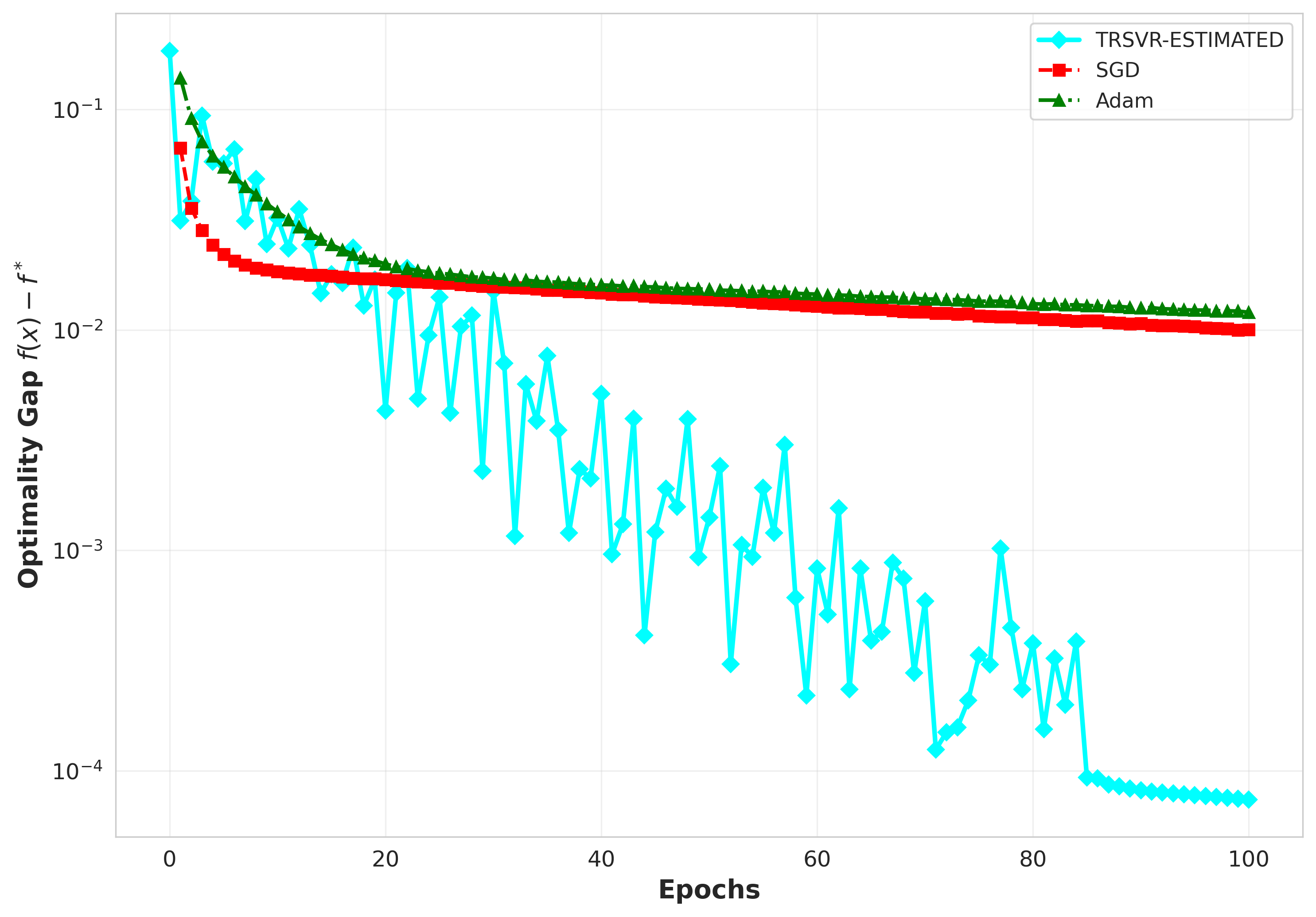}
        \caption{Optimality gap vs. epochs for Covertype.}
        \label{fig:covertype_gap_epochs}
    \end{minipage}\hfill
    \begin{minipage}{0.4\textwidth}
        \centering
        \includegraphics[width=\linewidth]{ Figures/Adam_SGD_Comparison/covertype/covertype_optimality_gap_time__COVERTYPE_Benchmark_Optimality_Gap_vs_Wall_clock_Time.png}
        \caption{Optimality gap vs. wall-clock time for Covertype.}
        \label{fig:covertype_gap_time}
    \end{minipage}
    
\end{figure}

\begin{figure}[h]
    \centering
    \begin{minipage}{0.4\textwidth}
        \centering
        \includegraphics[width=\linewidth]{ 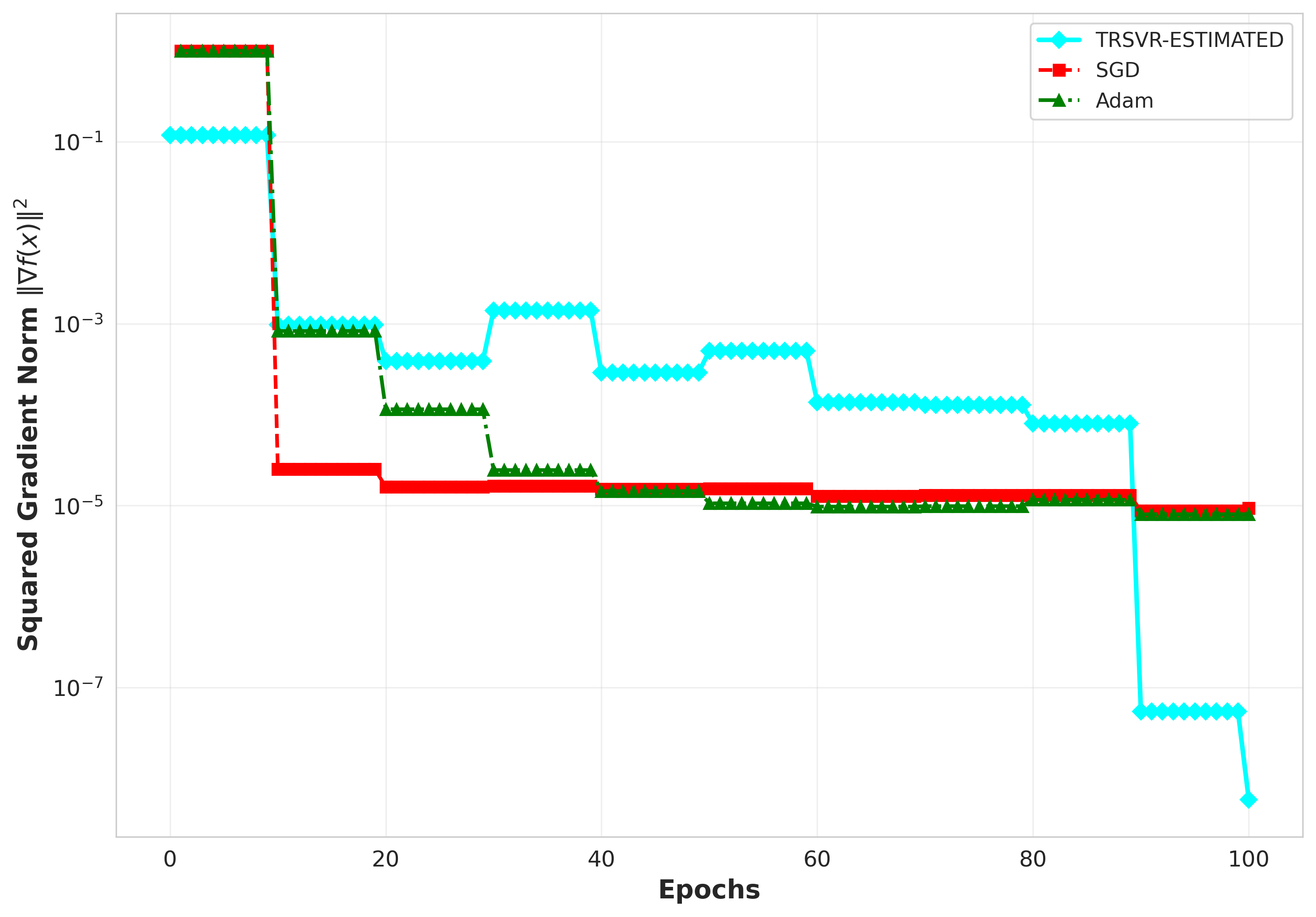}
        \caption{Squared gradient norm vs. epochs for Covertype.}
        \label{fig:covertype_grad_epochs}
    \end{minipage}\hfill
    \begin{minipage}{0.4\textwidth}
        \centering
        \includegraphics[width=\linewidth]{ 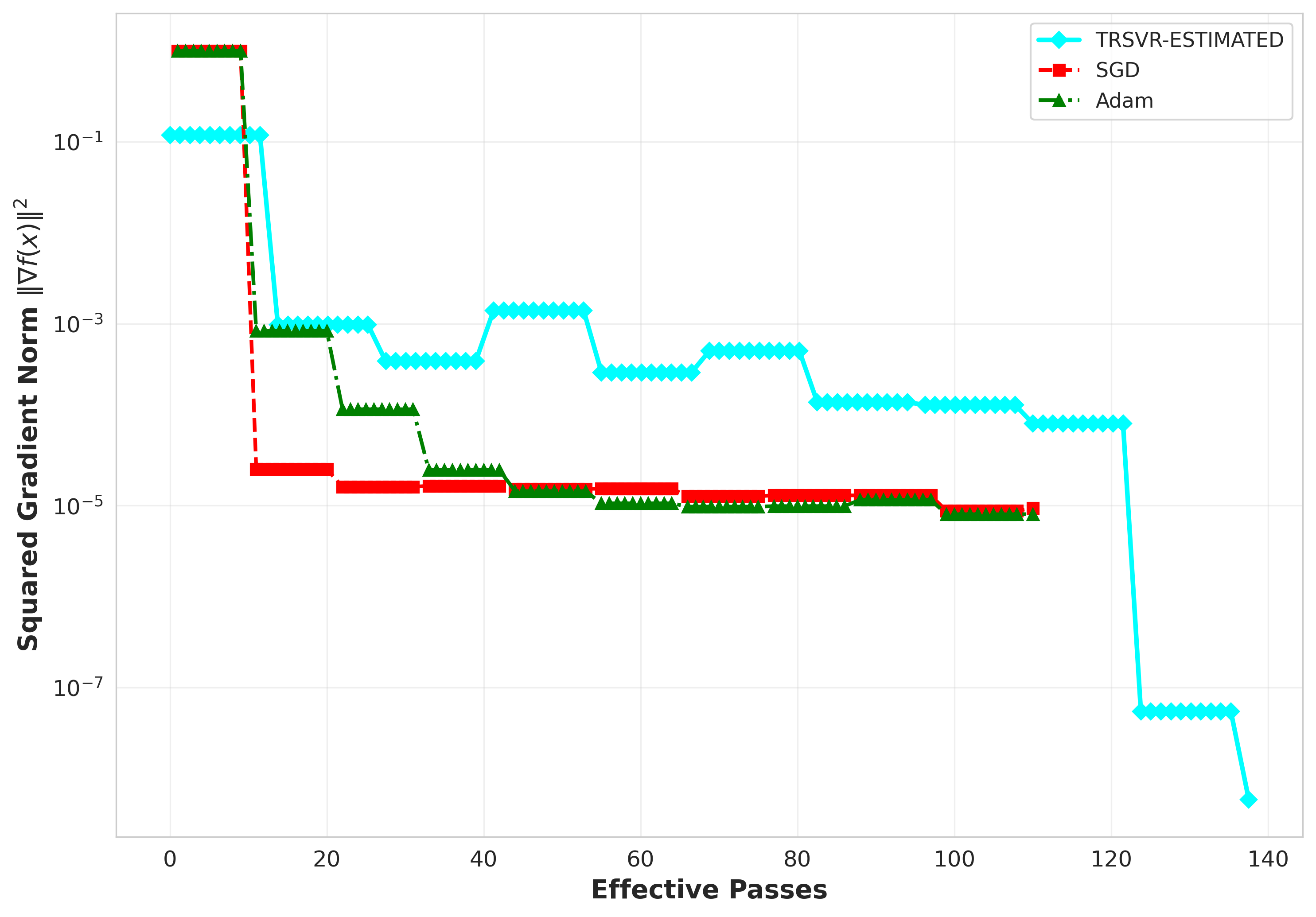}
        \caption{Squared gradient norm vs. effective passes for Covertype.}
        \label{fig:covertype_grad_passes}
    \end{minipage}
\end{figure}

\begin{figure}[h]
    \centering
    \begin{minipage}{0.4\textwidth}
        \centering
        \includegraphics[width=\linewidth]{ 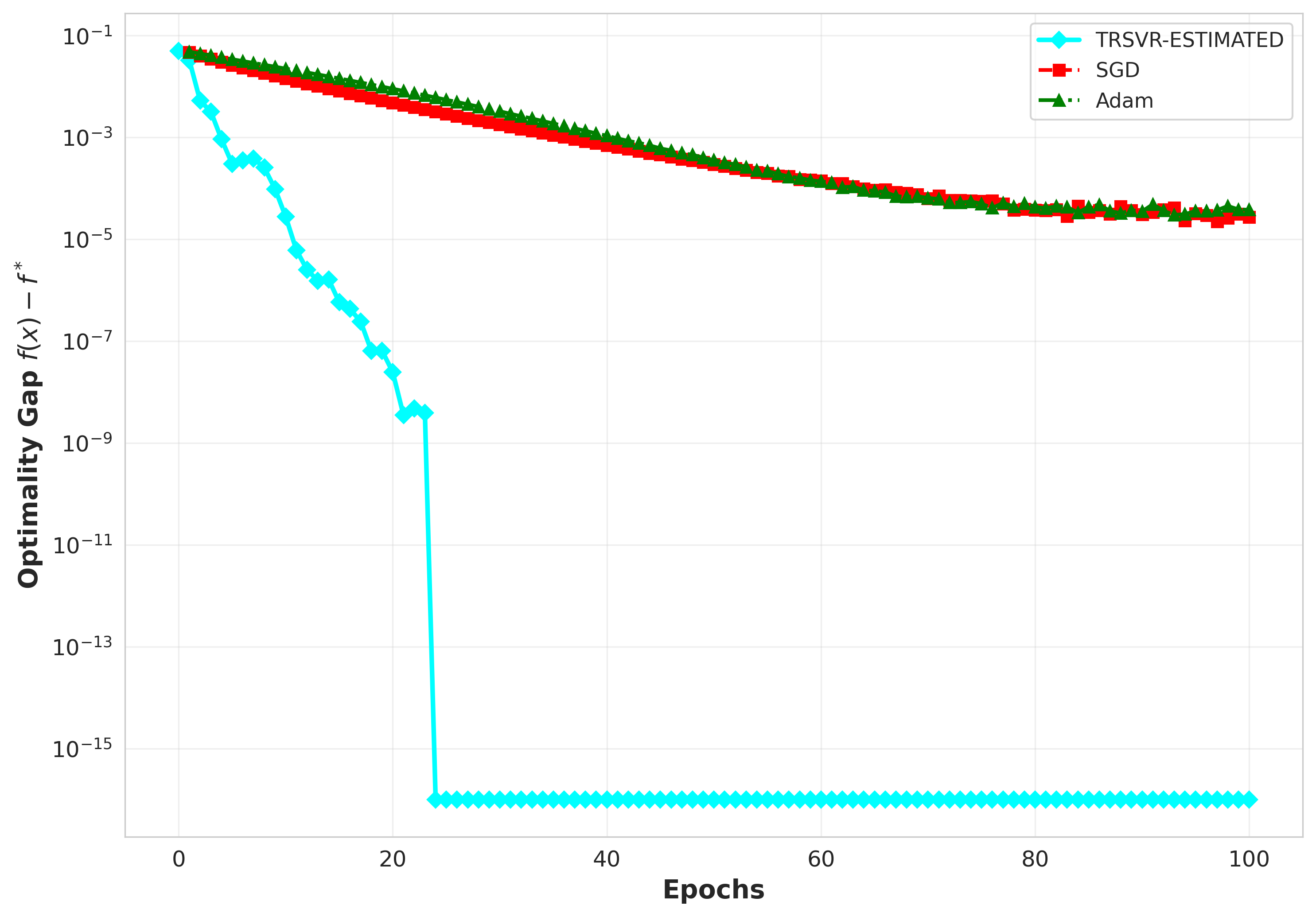}
        \caption{Optimality gap vs. epochs for IJCNN1.}
        \label{fig:ijcnn1_gap_epochs}
    \end{minipage}\hfill
    \begin{minipage}{0.4\textwidth}
        \centering
        \includegraphics[width=\linewidth]{ Figures/Adam_SGD_Comparison/ijcnn1/ijcnn1_optimality_gap_time__IJCNN1_Benchmark_Optimality_Gap_vs_Wall_clock_Time.png}
        \caption{Optimality gap vs. wall-clock time for IJCNN1.}
        \label{fig:ijcnn1_gap_time}
    \end{minipage}

\end{figure}

\begin{figure}[h]
    \centering
    
    \begin{minipage}{0.4\textwidth}
        \centering
        \includegraphics[width=\linewidth]{ 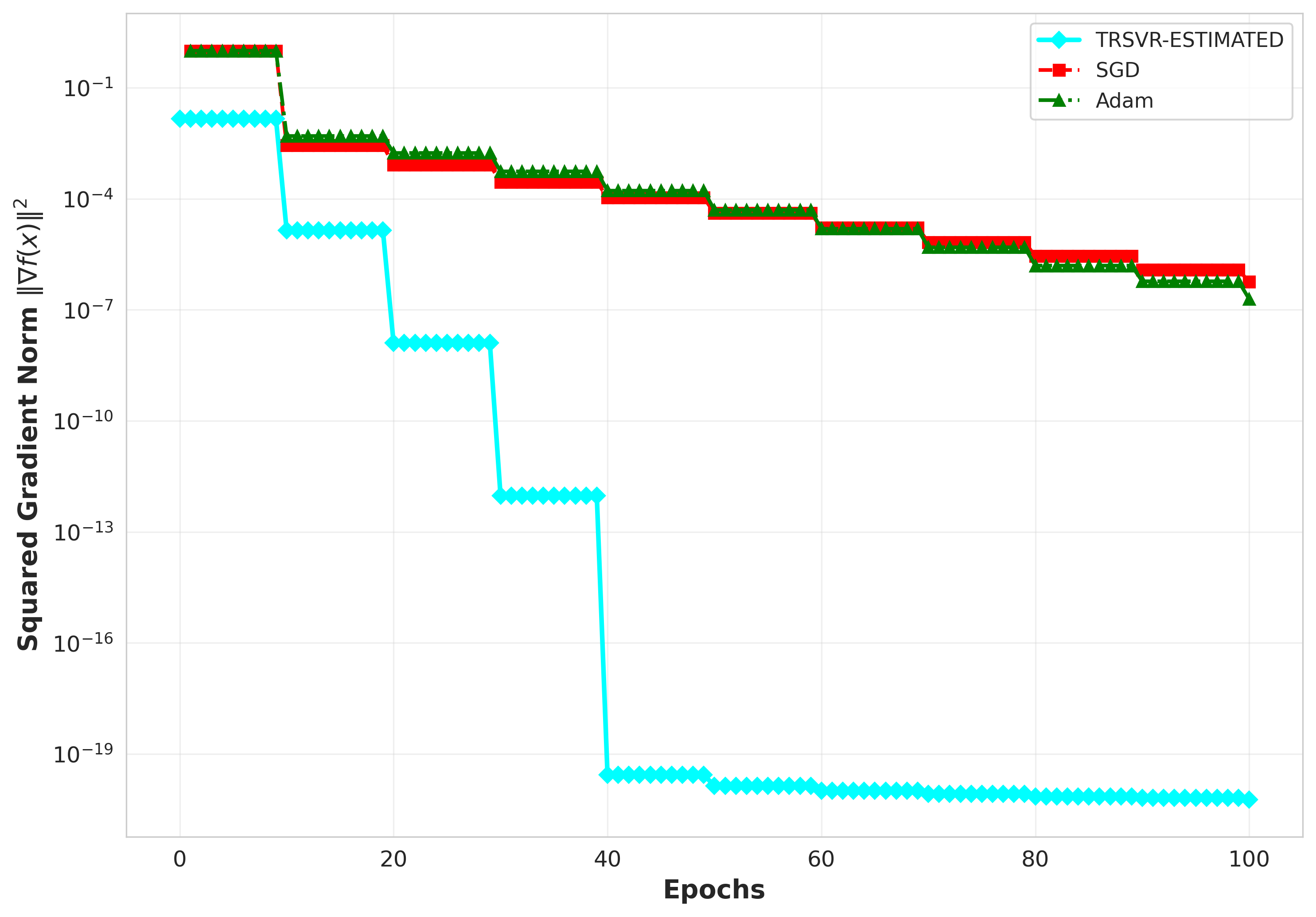}
        \caption{Squared gradient norm vs. epochs for IJCNN1.}
        \label{fig:ijcnn1_grad_epochs}
    \end{minipage}\hfill
    \begin{minipage}{0.4\textwidth}
        \centering
        \includegraphics[width=\linewidth]{ 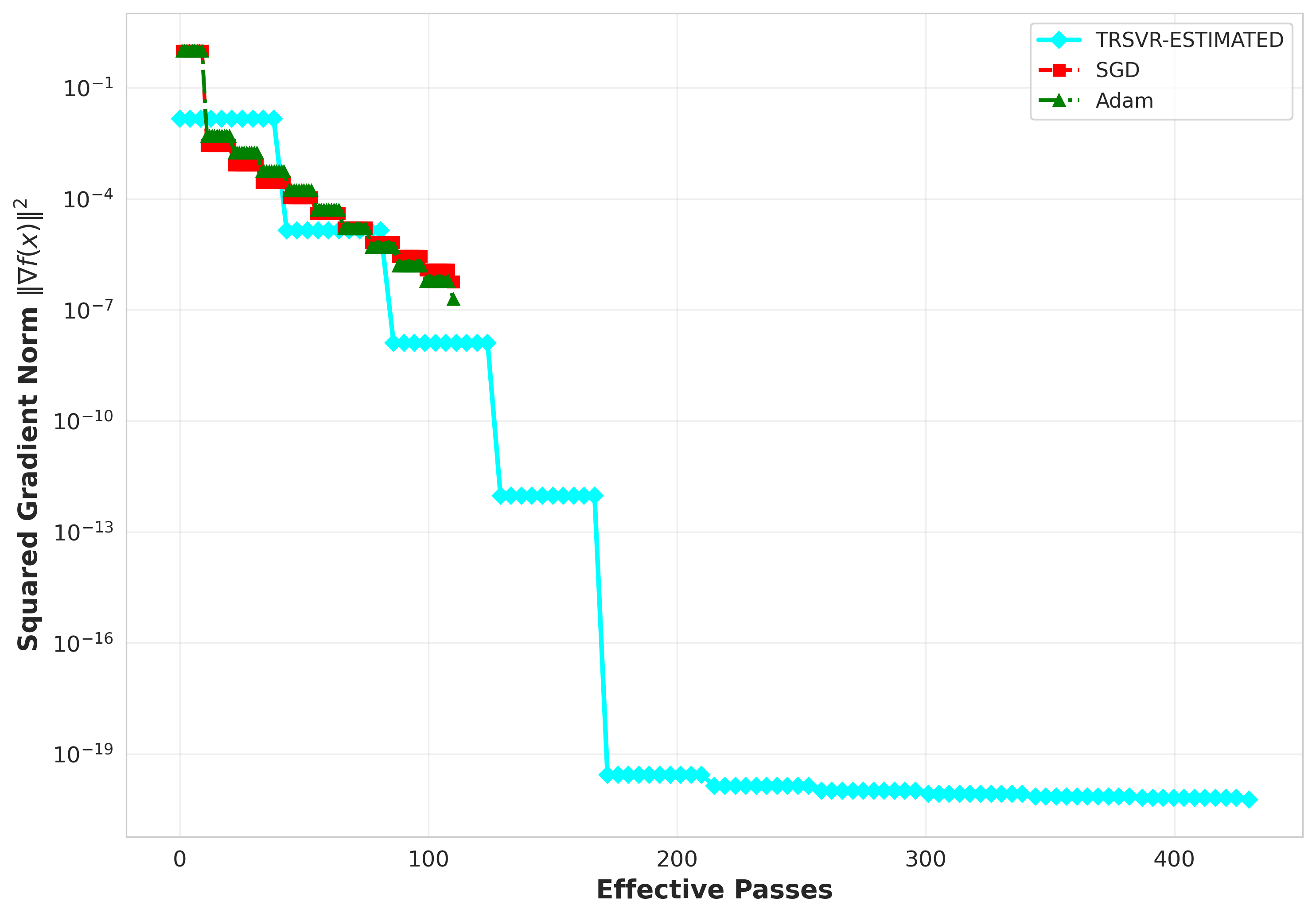}
        \caption{Squared gradient norm vs. effective passes for IJCNN1.}
        \label{fig:ijcnn1_grad_passes}
    \end{minipage}
\end{figure}

\subsection{Sensitivity analysis: inner loop length vs. batch size}

All configurations utilized the same radius-control parameter $\alpha=0.01$. Results are reported in Figures \ref{fig:rcv1_sens_gap_epochs}-\ref{fig:covertype_sens_grad_passes}.

\begin{figure}[h]
    \centering
    \begin{minipage}{0.4\textwidth}
        \centering
        \includegraphics[width=\linewidth]{ 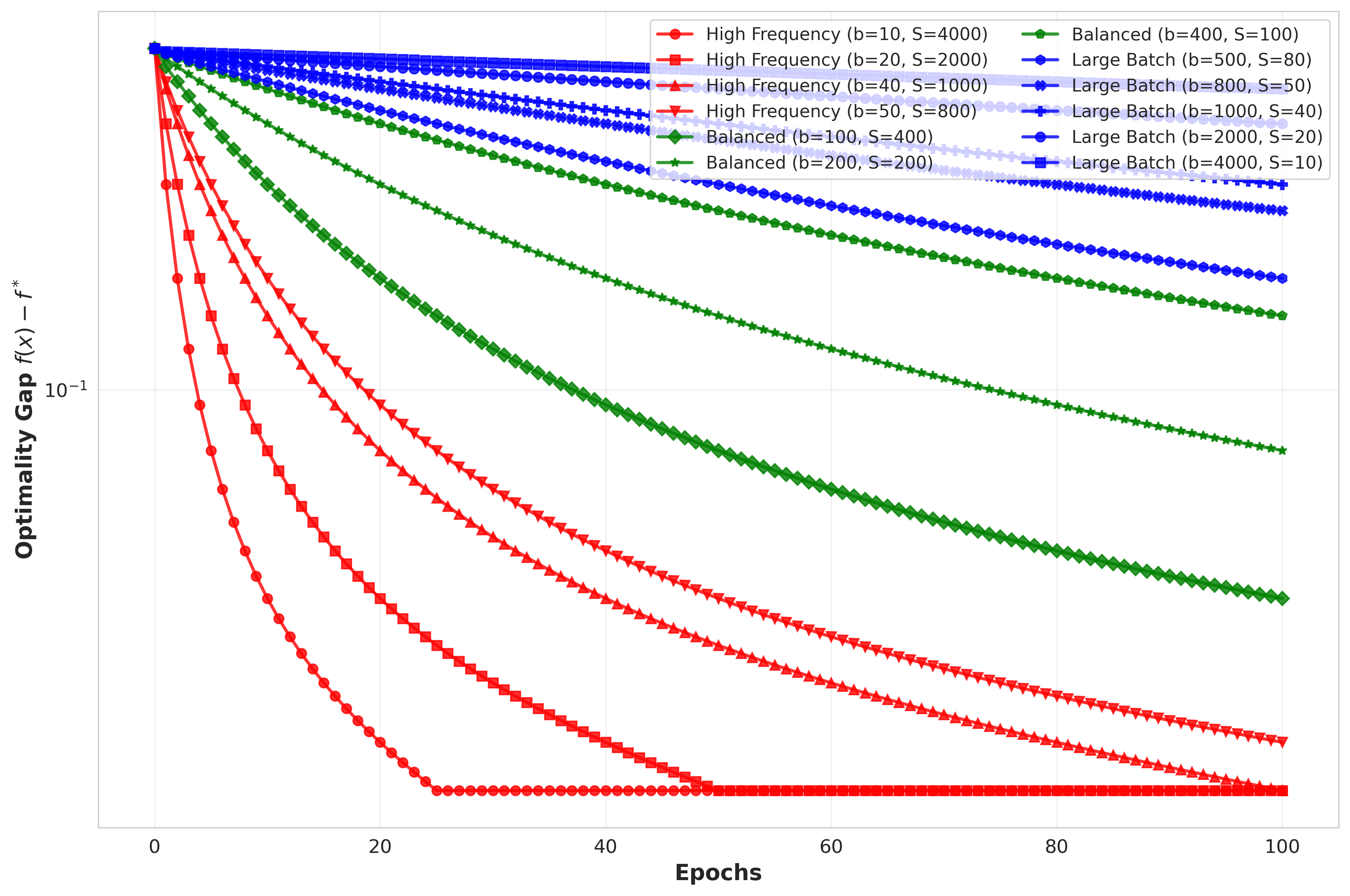}
        \caption{Optimality gap vs. epochs for RCV1.}
        \label{fig:rcv1_sens_gap_epochs}
    \end{minipage}\hfill
    \begin{minipage}{0.4\textwidth}
        \centering
        \includegraphics[width=\linewidth]{ 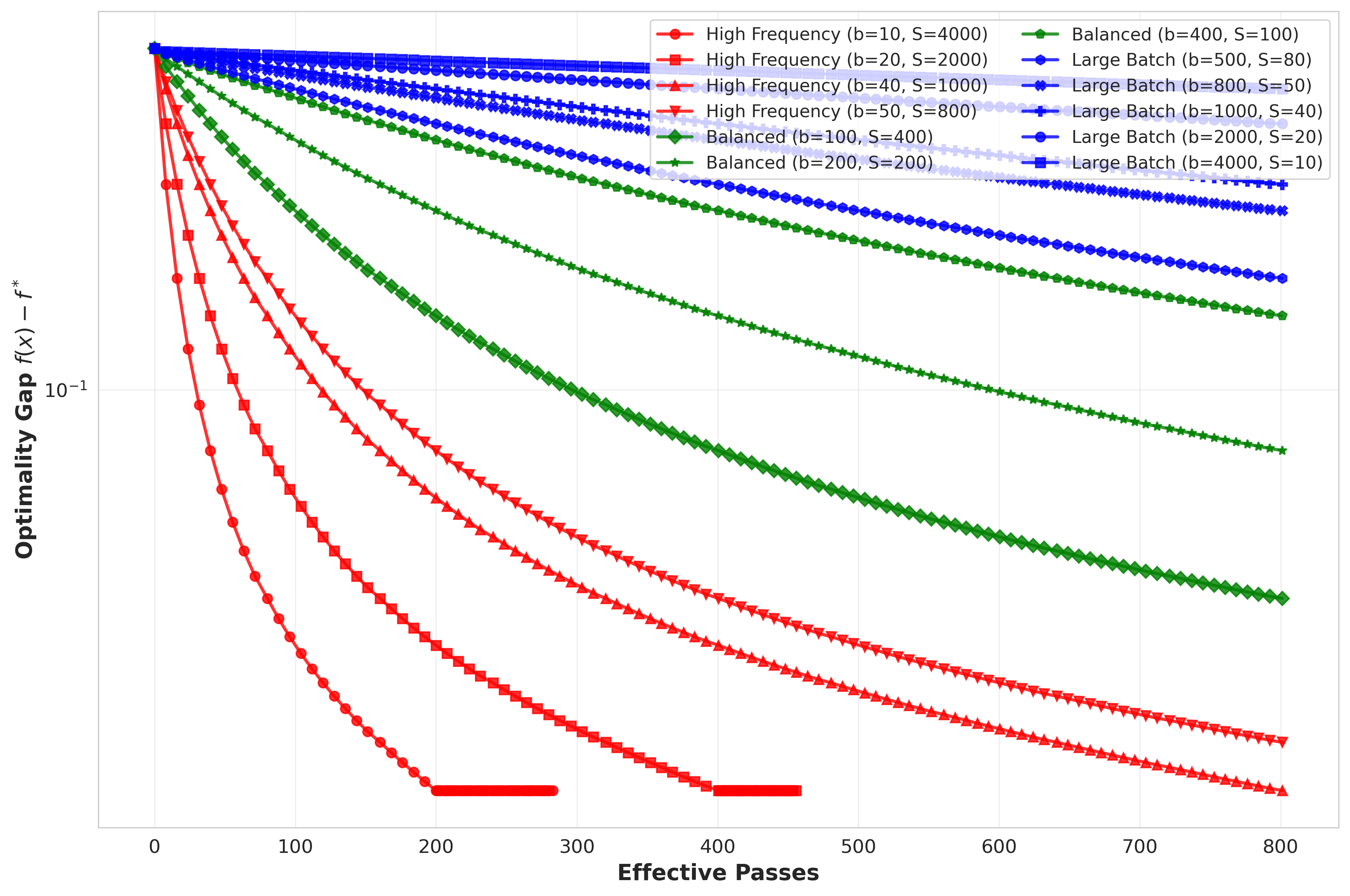}
        \caption{Optimality gap vs. effective passes for RCV1.}
        \label{fig:rcv1_sens_gap_passes}
    \end{minipage}

    \begin{minipage}{0.4\textwidth}
        \centering
        \includegraphics[width=\linewidth]{ 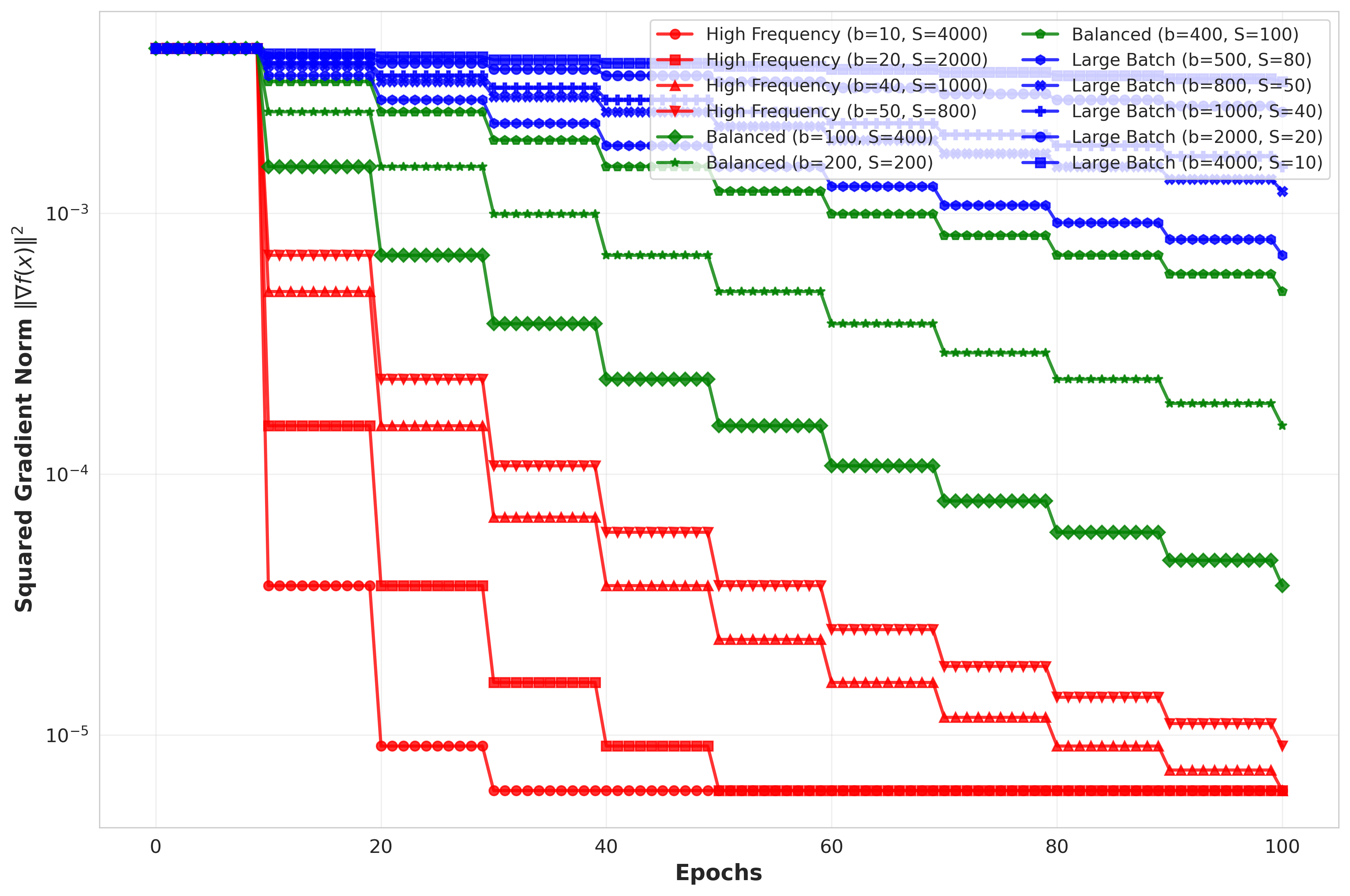}
        \caption{Squared gradient norm vs. epochs for RCV1.}
        \label{fig:rcv1_sens_grad_epochs}
    \end{minipage}\hfill
    \begin{minipage}{0.4\textwidth}
        \centering
        \includegraphics[width=\linewidth]{ Figures/Sensitivity/rcv1/rcv1_grad_norm_passes__RCV1_Sensitivity_Squared_Gradient_Norm_vs_Effective_Passes.png}
        \caption{Squared gradient norm vs. effective passes for RCV1.}
        \label{fig:rcv1_sens_grad_passes}
    \end{minipage}
\end{figure}

\begin{figure}[h]
    \centering
    \begin{minipage}{0.4\textwidth}
        \centering
        \includegraphics[width=\linewidth]{ 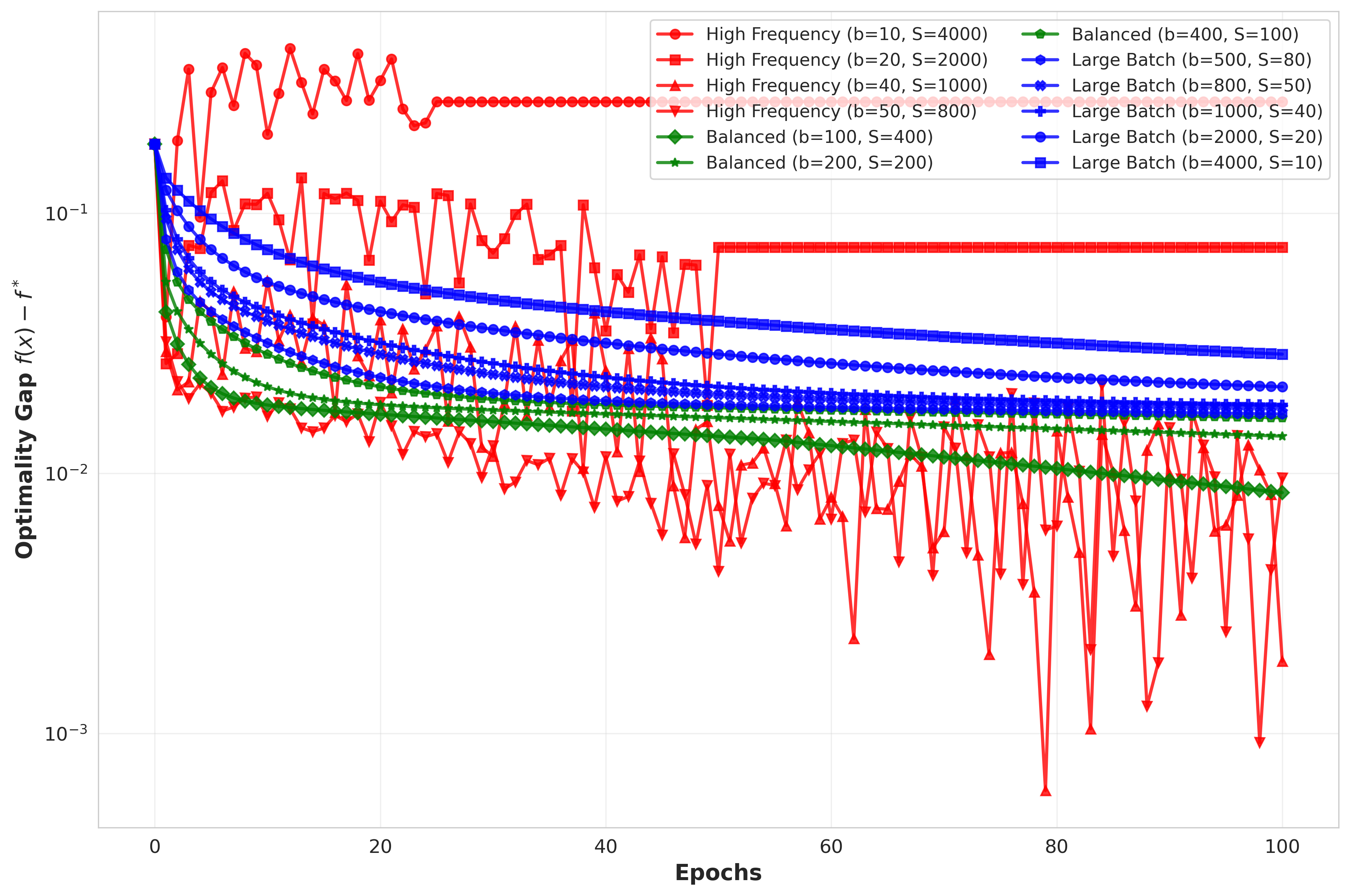}
        \caption{Optimality gap vs. epochs for Covertype.}
        \label{fig:covertype_sens_gap_epochs}
    \end{minipage}\hfill
    \begin{minipage}{0.4\textwidth}
        \centering
        \includegraphics[width=\linewidth]{ 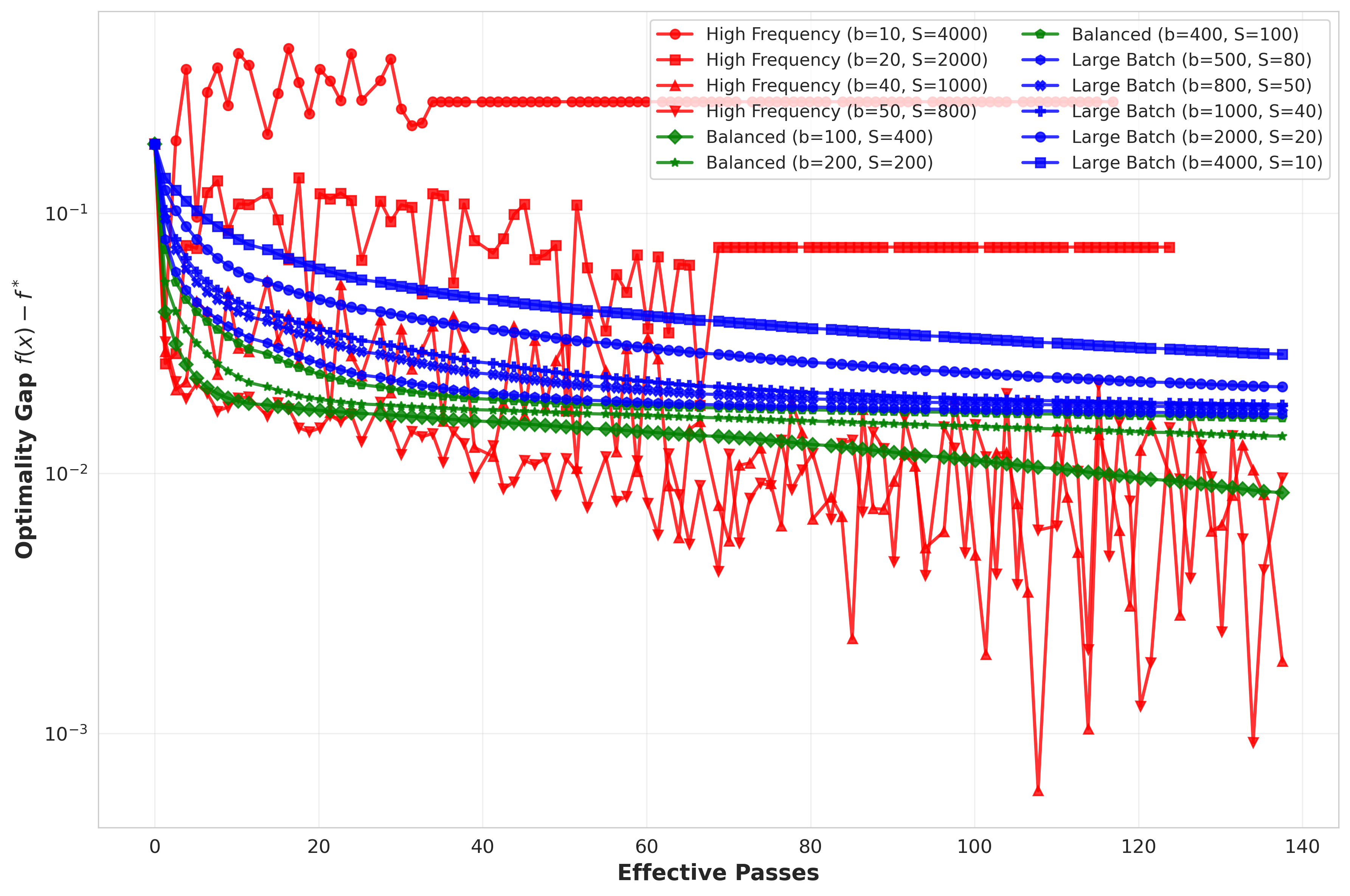}
        \caption{Optimality gap vs. effective passes for Covertype.}
        \label{fig:covertype_sens_gap_passes}
    \end{minipage}

    \begin{minipage}{0.4\textwidth}
        \centering
        \includegraphics[width=\linewidth]{ 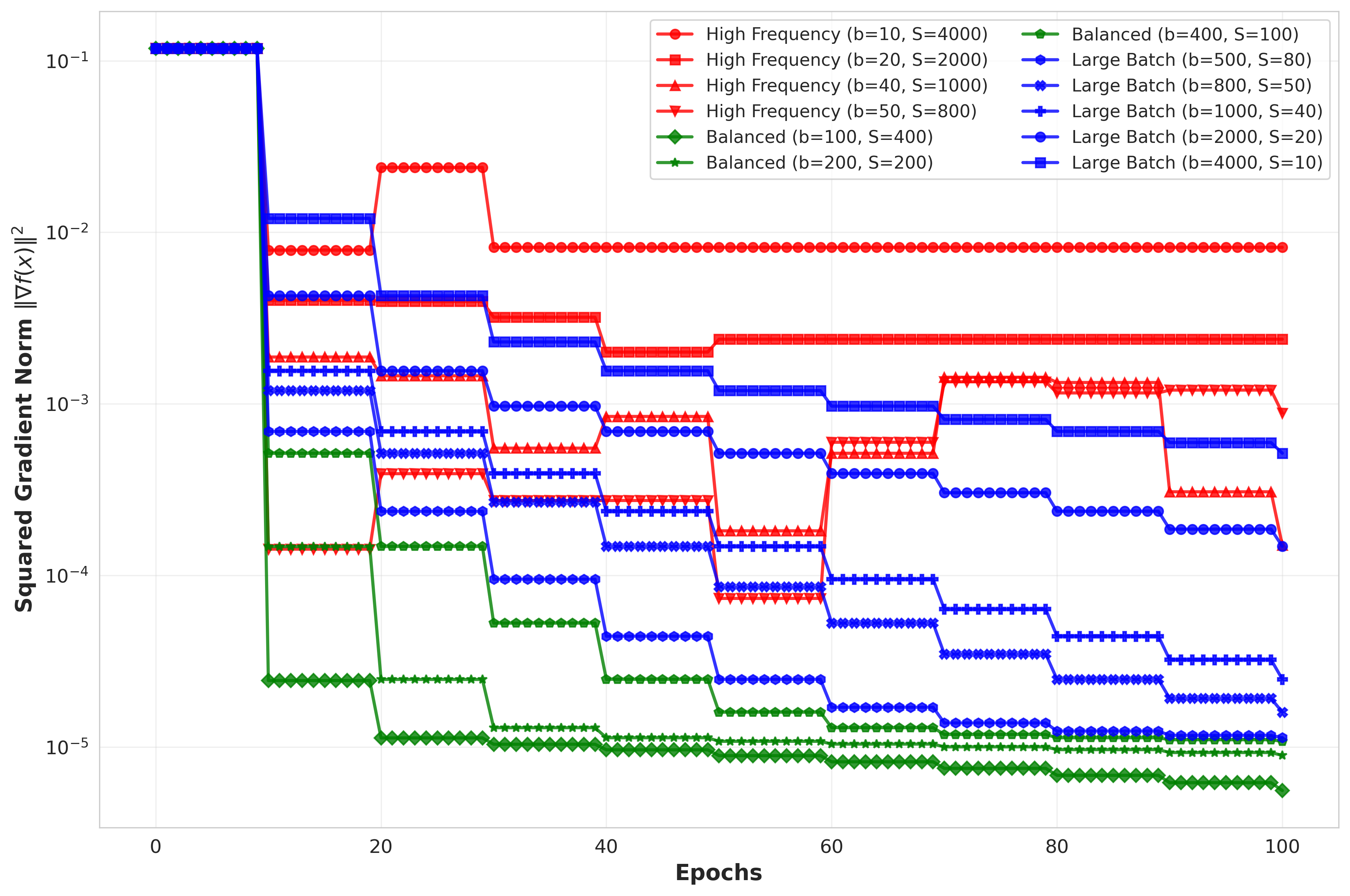}
        \caption{Squared gradient norm vs. epochs for Covertype.}
        \label{fig:covertype_sens_grad_epochs}
    \end{minipage}\hfill
    \begin{minipage}{0.4\textwidth}
        \centering
        \includegraphics[width=\linewidth]{ Figures/Sensitivity/covertype/covertype_grad_norm_passes__COVERTYPE_Sensitivity_Squared_Gradient_Norm_vs_Effective_Passes.png}
        \caption{Squared gradient norm vs. effective passes for Covertype.}
        \label{fig:covertype_sens_grad_passes}
    \end{minipage}
\end{figure}

\subsection{Ablation study}

All methods were run for a fixed budget of 100 epochs. The optimal configurations are reported in Table~\ref{tab:ablation_params}. Results are reported in Figures \ref{fig:all_datasets_ablation1}-\ref{fig:all_datasets_ablation4}.

\begin{table}[H]
    \centering
    \caption{Optimal Configurations for Ablation Study. For TRSVR, the configuration denotes $\alpha$ and inner-loop settings $(b, S)$. For Classic TR, the value is the initial trust-region radius $\Delta_0$. For TRish, the configuration denotes $\alpha$ and scaling factors $(\gamma_1, \gamma_2)$.}
    \label{tab:ablation_params}
    \begin{tabular}{lccc}
        \toprule
        \textbf{Dataset} & \textbf{TRSVR} & \textbf{Classic TR} & \textbf{TRish} \\
        \midrule
        RCV1 & $0.10$, $(20, 2000)$ & $\Delta_0 = 30.0$ & $0.40$, $(4.91, 0.03)$ \\
        Phishing & $0.09$, $(100, 400)$ & $\Delta_0 = 6.62$ & $0.007$, $(4.91, 0.49)$ \\
        Madelon & $0.08$, $(100, 400)$ & $\Delta_0 = 0.32$ & $0.14$, $(0.59, 0.008)$ \\
        Mushroom & $0.01$, $(100, 400)$ & $\Delta_0 = 14.09$ & $0.40$, $(3.22, 0.04)$ \\
        Cod-RNA & $0.26$, $(100, 400)$ & $\Delta_0 = 0.07$ & $0.14$, $(0.25, 0.002)$ \\
        IJCNN1 & $0.08$, $(20, 2000)$ & $\Delta_0 = 14.09$ & $9 \times 10^{-4}$, $(4.91, 0.13)$ \\
        \bottomrule
    \end{tabular}
\end{table}

\begin{figure*}[t]
    \centering
    \includegraphics[width=0.9\linewidth]{ 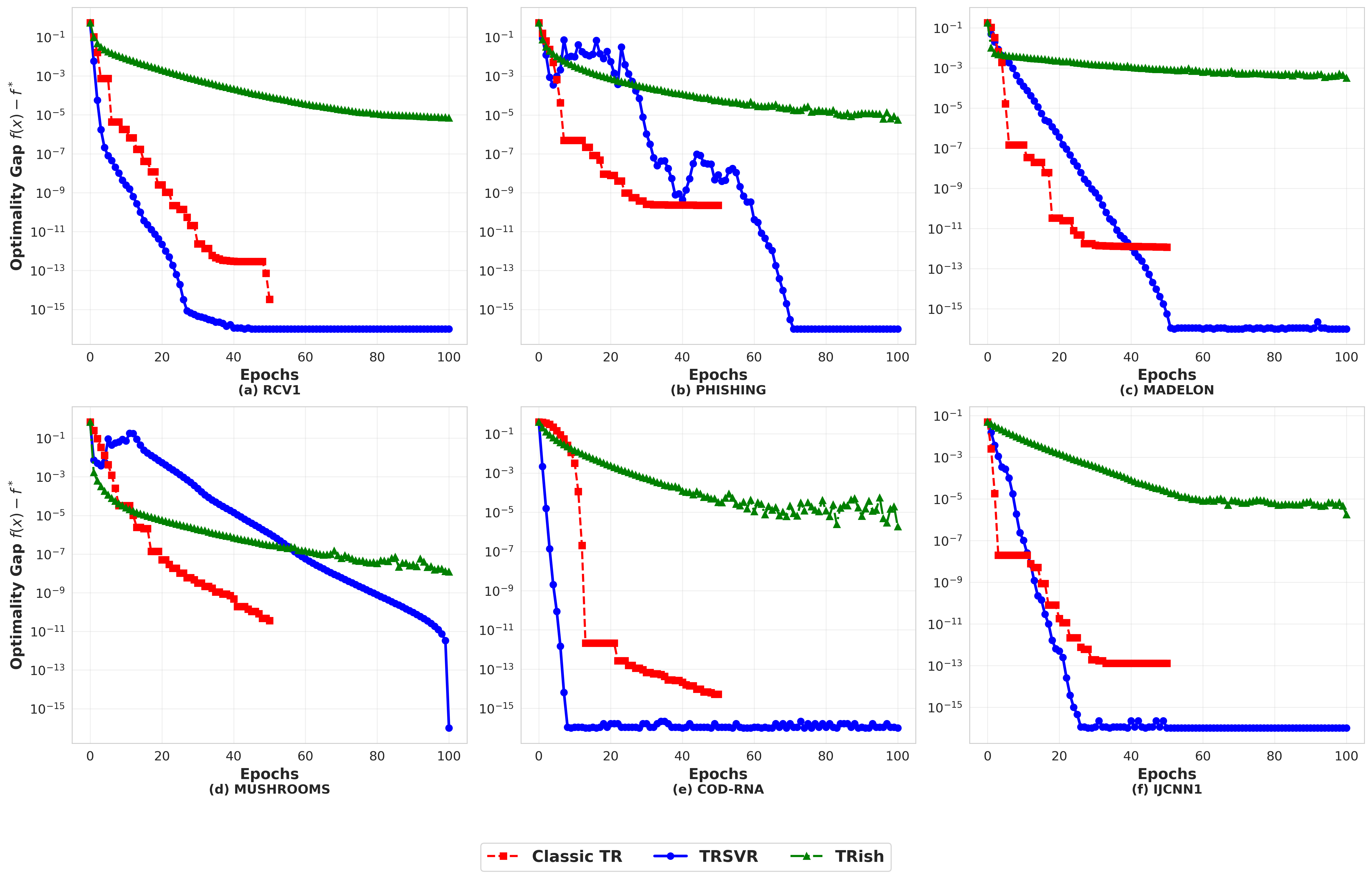}
    \caption{Optimality gap vs. epochs over six datasets.}
    \label{fig:all_datasets_ablation1}
\end{figure*}

\begin{figure*}[t]
    \centering
    \includegraphics[width=0.9\linewidth]{ 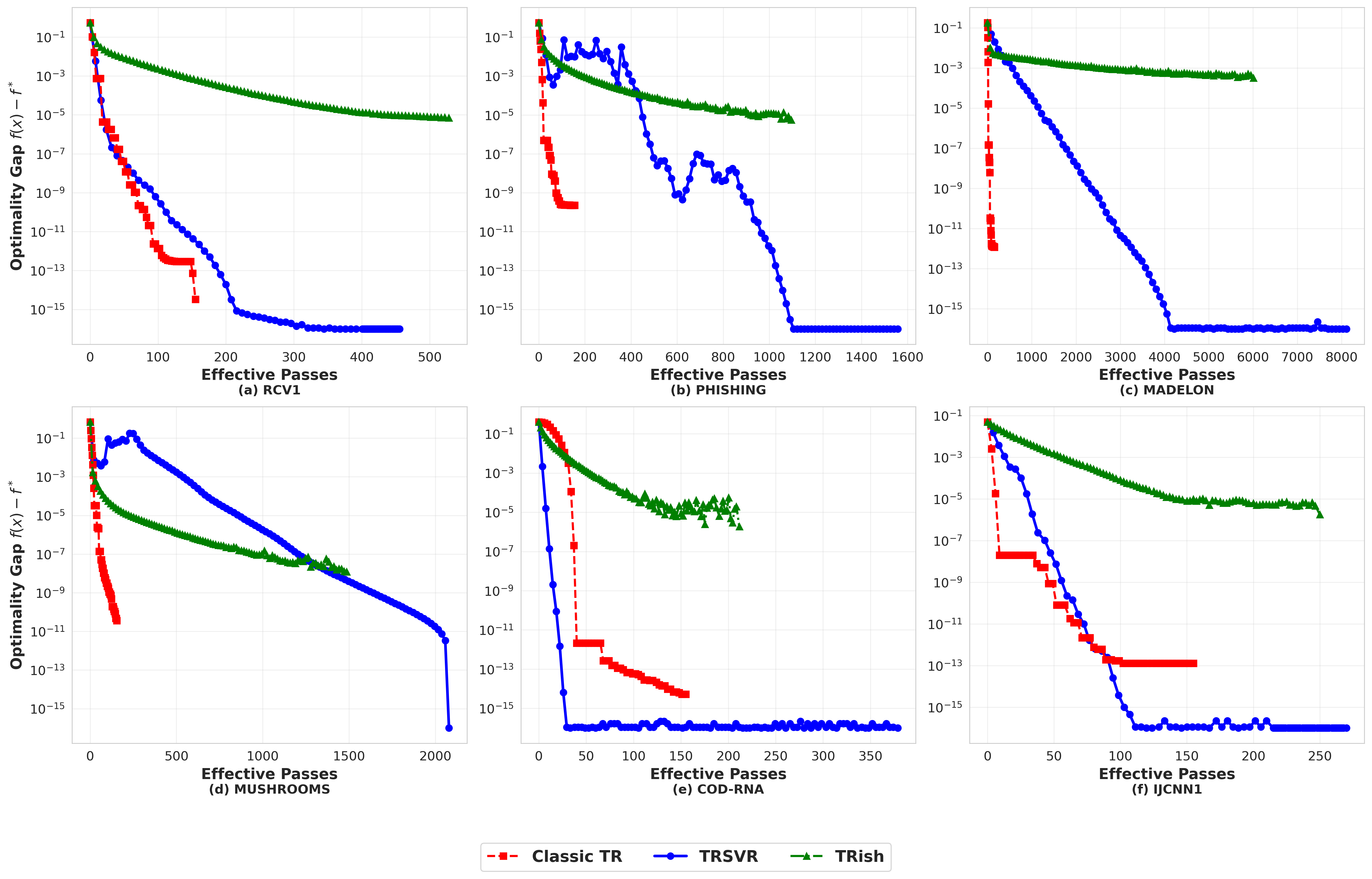}
    \caption{Optimality gap vs. effective passes over six datasets.}
    \label{fig:all_datasets_ablation2}
\end{figure*}

\begin{figure*}[t]
    \centering
    \includegraphics[width=0.9\linewidth]{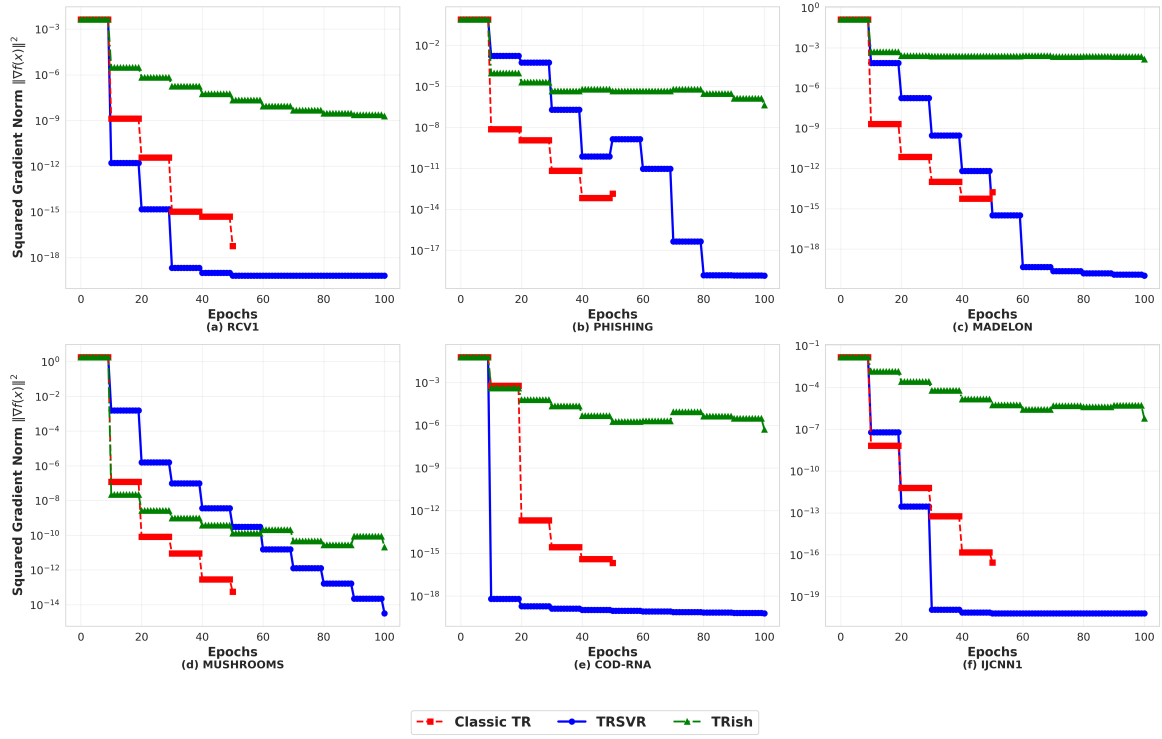}
    \caption{Squared gradient norm vs. epochs over six datasets.}
    \label{fig:all_datasets_ablation3}
\end{figure*}

\begin{figure*}[t]
    \centering
    \includegraphics[width=0.9\linewidth]{ 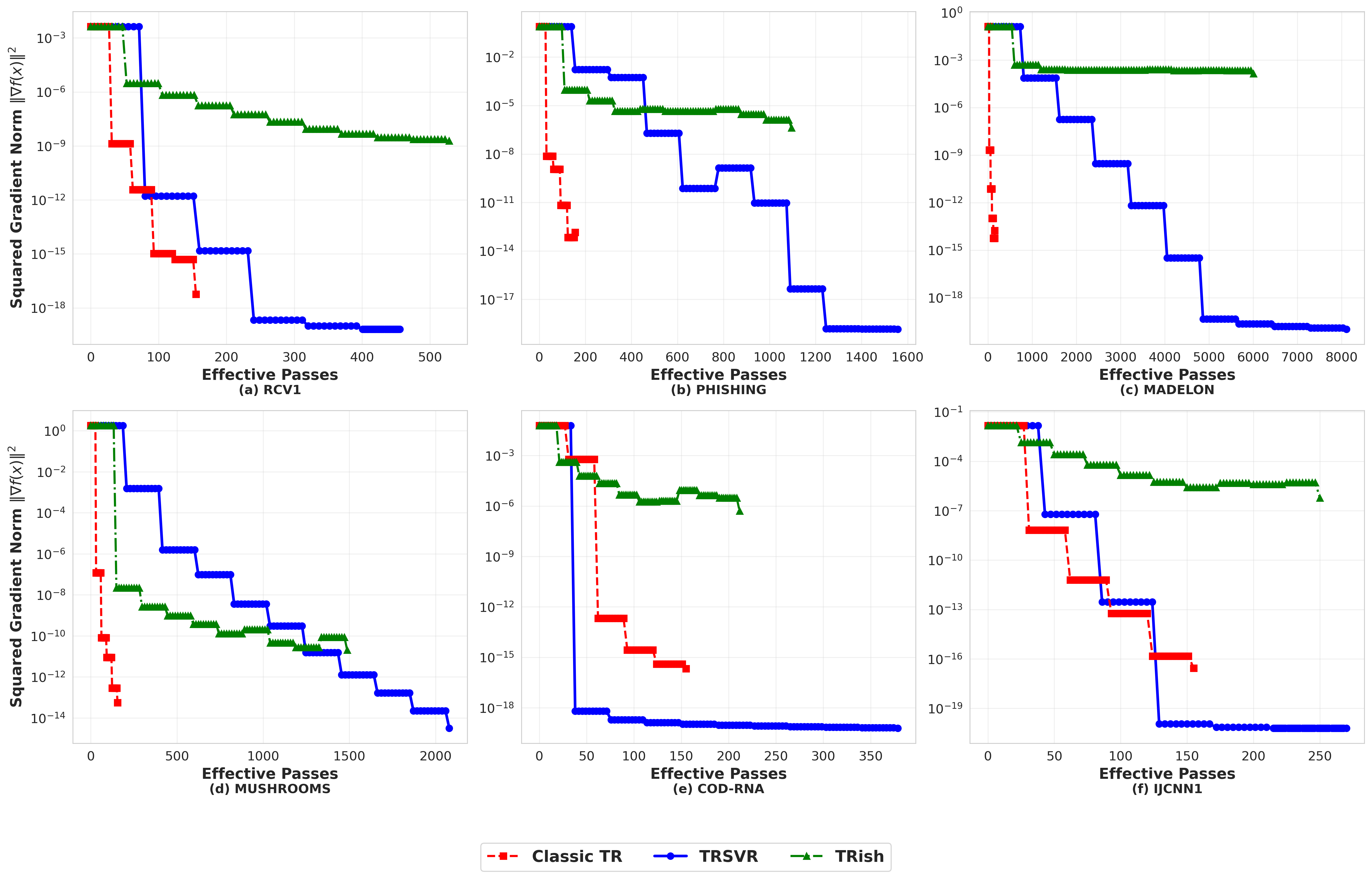}
    \caption{Squared gradient norm vs. effective passes over six datasets.}
    \label{fig:all_datasets_ablation4}
\end{figure*}
%%%%%%%%%%%%%%%%%%%%%%%

\end{document}